\def\thm@space@setup{%
  \thm@preskip=\parskip \thm@postskip=0pt
}
\definecolor{lightgray}{gray}{0.9}
\definecolor{red}{rgb}{1,0,0}
\theoremstyle{plain}
\newtheorem{theorem}{Theorem}[section]
\newtheorem{lemma}[theorem]{Lemma}
\newtheorem{proposition}[theorem]{Proposition}
\newtheorem{claim}[theorem]{Claim}
\theoremstyle{definition}
\newtheorem{definition}[theorem]{Definition}
\newtheorem{remark}[theorem]{Remark}
\newcommand{\eps}{\varepsilon}
\newcommand{\cA}{\mathcal{A}}
\newcommand{\cL}{\mathcal{L}}
\newcommand{\cP}{\mathcal{P}}
\newcommand{\cU}{\mathcal{U}}
\newcommand{\bC}{\mathbb{C}}
\newcommand{\bF}{\mathbb{F}}
\newcommand{\bH}{\mathbb{H}}
\newcommand{\bN}{\mathbb{N}}
\newcommand{\bR}{\mathbb{R}}
\newcommand{\bZ}{\mathbb{Z}}
\newcommand{\rH}{\mathrm{H}}
\newcommand{\ph}{\varphi}
\newcommand{\id}{\mathbbm 1}
\DeclareMathOperator{\Ham}{Ham}
\DeclareMathOperator{\sign}{sign}
\newcommand{\del}{\partial}
\newcommand{\oB}{\overline{B}}
\newcommand{\R}{\mathbb{R}}
\newcommand{\Z}{\mathbb{Z}}
\DeclareMathOperator{\trace}{trace}
\DeclareMathOperator{\spec}{spec}
\newcommand{\tf}{{\widetilde{f}}}
\newcommand{\talpha}{\widetilde{\alpha}}
\def\id{{1\hskip-2.5pt{\rm l}}}
\title{Embeddings of free groups into asymptotic cones of Hamiltonian diffeomorphisms}
\date{\today}
\author{D. Alvarez-Gavela, V. Kaminker, A. Kislev, K. Kliakhandler,  \\ A. Pavlichenko, L. Rigolli, D. Rosen, O. Shabtai, \\  B. Stevenson, J. Zhang \footnote{This paper was the outcome of the authors' work in the computational symplectic topology graduate student team-based research program held May 17 to May 26, 2015 in Tel-Aviv University and July 27 to August, 5, 2015 at ICERM. The program was partially supported by the European Research Council (ERC) grants No. 637386 and No. 338809.} }
\begin{document}
\maketitle
\begin{abstract}
  Given a symplectic surface $(\Sigma, \omega)$ of genus $g \geq 4$, we show that the free group with two generators embeds into every asymptotic cone of $(\text{Ham}(\Sigma, \omega),d_{\text{H} })$, where $d_H$ is the Hofer metric. The result stabilizes to products with symplectically aspherical manifolds.
\end{abstract}
\tableofcontents

\section{Introduction and statement of results}
\label{introduction}
This paper combines ideas from geometric group theory and symplectic topology.
Our goal is to study the coarse geometry of the group of Hamiltonian diffeomorphisms of a symplectic manifold, equipped with the Hofer norm.
When the manifold is a symplectic surface of genus $g \geq 4$, we show, using Hamiltonian Floer theory, that any asymptotic cone of this group contains a free subgroup with 2 generators. The same holds for a product of such a surface with a symplectically aspherical manifold.

\subsection{The asymptotic cone}
Given a metric space $(X,d)$ with a chosen base point $x_0 \in X$, its asymptotic cone is a rescaling limit which informally results from looking at $X$ from points further and further away from the basepoint.
More precisely, given a non-principal ultrafilter $\cU$ on $\mathbb{N}$, we construct the asymptotic cone $\text{Cone}_{\cU}(X,d)$ as follows.
The elements of $\text{Cone}_{\cU}(X,d)$ consist of equivalence classes of sequences $(x_j)_{j \in \bN}$ of points $x_j \in X$ such that the sequence of real numbers $d(x_0,x_j)/j$ is bounded, where we identify $(x_j)_{j \in \bN} \sim (y_j)_{j \in \bN}$ whenever $\lim_{\cU} d(x_j,y_j)/j = 0$
The asymptotic cone is a metric space, with the metric given by the ultralimit $d_{\cU}( [(x_j)_{j \in \bN} ] , [ (y_j)_{j \in \bN} ] ) = \lim_{\cU} d(x_j,y_j)/j$.
For example, $\text{Cone}_{\cU}(\bR^n)=\text{Cone}_{\cU}(\bZ^n)=\bR^n$ for every $\cU$, whereas $\text{Cone}_{\cU}(\bH)$ is a tree where every vertex has an uncountable degree.

Let $G$ be a group equipped with a bi-invariant metric $d$, so that $d(gh,g'h)=d(g,g')=d(hg,hg')$ for all $g,g',h \in G$.
It is easy to check that the asymptotic cone $\text{Cone}_{\cU}(G,d)$ is also a group, where the group operation is multiplication in each coordinate (see \cite[Proposition 3.3]{Calegari2011}).
Note that for this operation to be well defined it is crucial for the metric to be bi-invariant.
The asymptotic cone construction was introduced by Gromov and is nowadays a standard object in geometric group theory, see for example \cite{Gromov1993} and \cite{Calegari2011}.

In this paper we consider the asymptotic cone in the context of symplectic geometry.

\subsection{Hofer's geometry}
Let $(M, \omega)$ be a symplectic manifold.
Given a smooth function $H:S^1 \times M \rightarrow \bR$, there exists a unique (time-dependent) vector field $X_t$ on $M$ such that $\omega(X_{H_t}, \cdot ) = -dH_t$, where $H_t(p):=H(t,p)$.
Let $\varphi^t_H:M \to M$ be the flow of the ODE $\dot x(t)=X_{t}(x(t))$, making sufficient assumptions to ensure that the flow is globally defined on the time interval $[0,1]$ (for example, we could take $M$ to be compact).
Inside the group $\text{Symp}(M, \omega)=\{ \phi \in \text{Diff}(M) : \, \, \phi^* \omega = \omega\}$ of symplectomorphisms we have the subgroup of Hamiltonian diffeomorphisms $\Ham(M, \omega)$, which consists of the time-one maps  $\varphi^1_H:M \to M$ of flows as above.
The group $\text{Ham}(M, \omega)$ is equipped with a geometrically meaningful bi-invariant metric introduced by Hofer.
The resulting metric group is an important object of study in symplectic geometry. For $\phi \in \text{Ham}(M,\omega)$, we define the Hofer norm
\[ \| \phi \|_{\text{H}} = \inf_{H} \int_{0}^{1} \left(\max\limits_{M}H_t - \min\limits_{M} H_t\right) \, \,  dt \]
where the infimum ranges over all functions $H \in C^{\infty}(S^1\times M)$ such that $\varphi^1_H=\phi$. The Hofer norm was introduced by Hofer in \cite{Hofer1990}, where it was proven to be non-degenerate for $M=\R^{2n}$.
Later, the non-degeneracy was generalized to all symplectic manifolds (see \cite{Viterbo1992}, \cite{Polterovich1993}, \cite{Lalonde1995}).
From this norm we obtain a non-degenerate metric $d_{\text{H}}(\phi, \psi)=\| \phi \circ \psi^{-1} \|_{\text{H} }$, called the Hofer metric.
The bi-invariance of the Hofer metric follows from the conjugation invariance of the Hofer norm.

\subsection{Main results}
The main result of this paper is the following theorem. Let $\mathbb{F}_2=\mathbb{F}\langle V,H \rangle$ denote the (non abelian) free group on two letters.

\begin{theorem}
\label{cone}
Let $(\Sigma, \omega_\Sigma)$ be a closed symplectic surface of genus $g \geq 4$ and let $(M,\omega_M)$ be a closed symplectically aspherical manifold. Then, for any non-principal ultrafilter $\cU$ on $\bN$, there exists a monomorphism $\bF_2 \hookrightarrow \text{Cone}_{\cU}(\text{Ham}(\Sigma \times M), d_{\text{H} })$.
\end{theorem}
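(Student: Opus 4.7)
The plan is to produce two sequences $(\phi_n)_{n\in\bN}$ and $(\psi_n)_{n\in\bN}$ in $\Ham(\Sigma\times M)$ whose equivalence classes in $\text{Cone}_{\cU}$ freely generate a copy of $\bF_2$. Because the asymptotic cone is itself a group under coordinate-wise multiplication, it suffices to establish two bounds: first, the linear upper bound $\|\phi_n\|_{\rH}, \|\psi_n\|_{\rH}=O(n)$, which ensures that the sequences define honest elements of $\text{Cone}_{\cU}$; and second, a linear lower bound $\|w(\phi_n,\psi_n)\|_{\rH}\ge c_w\cdot n$ (for $\cU$-almost every $n$) for every non-trivial reduced word $w\in\bF\langle V,H\rangle$, which ensures that the induced homomorphism sending $V\mapsto [(\phi_n)]$, $H\mapsto [(\psi_n)]$ is injective.

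\textbf{Constructing the generators (use of $g\ge 4$).} I would take $\Phi,\Psi\in \Ham(\Sigma)$ to be autonomous ``egg-beater''-type Hamiltonian diffeomorphisms in the spirit of Polterovich-Shelukhin, each supported in a pair of transversally linking annuli. Requiring the two egg-beater configurations to be \emph{disjoint} from one another consumes four topologically independent handles, which is precisely why genus $g\ge 4$ is assumed. Pulling $\Phi$ and $\Psi$ back under the projection $\Sigma\times M \to \Sigma$, one obtains autonomously generated Hamiltonian diffeomorphisms of $\Sigma\times M$, whose iterates $\phi_n:=\Phi^n$, $\psi_n:=\Psi^n$ automatically satisfy $\|\phi_n\|_{\rH}, \|\psi_n\|_{\rH}\le C\cdot n$.

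\textbf{Detecting words by Floer quasi-morphisms.} For the lower bound on words, the natural tool is a family of Hofer-Lipschitz homogeneous quasi-morphisms $\{\mu_\alpha\}$ on $\Ham(\Sigma\times M)$ built from spectral invariants of Hamiltonian Floer homology. Symplectic asphericity of $M$, combined with $\pi_2(\Sigma)=0$, ensures that $\omega_{\Sigma\times M}$ pairs trivially with $\pi_2(\Sigma\times M)$, so that the Floer action functional is single-valued and these invariants are well-defined real numbers. The disjointness of the two egg-beater supports is the source of a ping-pong structure: the composition $w(\Phi,\Psi)$ shuffles the two configurations in a way governed by the combinatorics of $w$, and one expects the Floer action spectrum of $w(\phi_n,\psi_n)$ to contain an eigenvalue of order $c_w\cdot n$. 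Choosing $\alpha=\alpha(w)$ to select that eigenvalue and invoking Hofer-Lipschitzness of $\mu_{\alpha(w)}$ then gives $\|w(\phi_n,\psi_n)\|_{\rH}\ge c_w\cdot n$, as desired.

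\textbf{Main obstacle.} The hardest step is undoubtedly this uniform linear-in-$n$ lower bound over all reduced words $w$. Two interlocking issues must be resolved: the egg-beaters have to be calibrated so that the action spectrum of every reduced word in $\Phi,\Psi$ contains a detectable growing eigenvalue, a genuinely ping-pong-like statement on the Floer side; and the additive defect of the quasi-morphism $\mu_{\alpha(w)}$ must not swamp that eigenvalue as the number of letters grows. The remaining pieces — the upper bound, the extension of Floer-theoretic quasi-morphisms from $\Ham(\Sigma)$ to $\Ham(\Sigma\times M)$ under the asphericity hypothesis, and the passage to the ultralimit — are then essentially formal.
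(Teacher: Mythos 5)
Your proposal contains a fatal structural error and then, in identifying the ``main obstacle,'' essentially names without resolving the very issue the paper was written to overcome.

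\textbf{Disjoint supports kill the free group.} You propose two egg-beater configurations with \emph{disjoint} supports, setting $\phi_n = \Phi^n$, $\psi_n = \Psi^n$. But if $\operatorname{supp}\Phi \cap \operatorname{supp}\Psi = \emptyset$ then $\Phi$ and $\Psi$ commute, so $[\Phi^n,\Psi^n]=\id$ for all $n$, and $w(\phi_n,\psi_n)=\id$ for every $w$ in the commutator subgroup of $\bF_2$. The induced map $\bF_2 \to \text{Cone}_{\cU}$ then factors through $\bZ^2$ and cannot be injective. The paper does the opposite: it builds a \emph{single} egg-beater region $C = C_V \cup C_H$ from two annuli that meet in two squares, takes $f_V, f_H$ to be shears along those two \emph{intersecting} annuli, and sets $\Phi_k(V)=f_V^k$, $\Phi_k(H)=f_H^k$. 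The non-commutativity of $f_V$ and $f_H$ coming from the overlap is precisely what produces the rich periodic-orbit structure, and the genus bound $g \geq 4$ is what is needed to embed that one configuration with the required $\pi_1$-injectivity and separating-boundary property (not, as you suggest, to make room for two disjoint ones).

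\textbf{Homogeneous quasi-morphisms cannot see commutators.} You propose to lower-bound $\|w(\phi_n,\psi_n)\|_{\rH}$ via a family of Hofer-Lipschitz homogeneous quasi-morphisms $\mu_\alpha$. But a homogeneous quasi-morphism is bounded on commutators by its defect, and more generally vanishes on the stable commutator length in the sense that $|\mu(g)| \leq D(\mu)\cdot\text{scl}(g)$; so for words $w$ in $[\bF_2,\bF_2]$ any such $\mu_\alpha$ is uniformly bounded along the sequence $w(\phi_n,\psi_n)$, and cannot certify linear growth. You flag exactly this concern (``the additive defect ... must not swamp that eigenvalue'') but offer no mechanism to circumvent it. The paper's central idea is to replace quasi-morphisms by Usher's \emph{boundary depth} $\beta_\alpha$ of the filtered Floer complex in a carefully chosen \emph{non-contractible} free homotopy class $\alpha(k,w)$. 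Boundary depth is a Hofer lower bound but is \emph{not} a quasi-morphism, and in particular can grow linearly on commutators — this is what makes the theorem go through. Spectral invariants are only used in the degenerate easy case of words conjugate to a power of a single generator. The remaining ingredients (explicit computation of the $2^{2r}$ fixed points of the piecewise-linear model, their actions and Conley-Zehnder indices, and the action gap that bounds $\beta_\alpha$ from below) occupy Sections \ref{fixed} and \ref{action-cz} and are not ``essentially formal.''
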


In the literature there are results related to abelian embeddings into $\text{Ham}$. See for example \cite{Py2008} and \cite{Usher2011}.
As far as we know, our theorem is the first one considering non-abelian embeddings.
To prove our result, we construct a sequence of homomorphisms $\Phi_{k}:\bF_2 \to \text{Ham}(\Sigma \times M)$ such that for any nontrivial word $w \in \bF_2$ we have $ \| \Phi_{k}(w) \|_{\text{H} }  \to \infty$ as $k \to \infty$, with asymptotic growth linear in $k$.
This allows us to construct the embedding
\[ \Phi :  \mathbb{F}_2 \hookrightarrow \text{Cone}_{\cU}( \text{Ham}(\Sigma \times M) , d_{\text{H} }) , \quad
  w \mapsto [ (\Phi_k(w) )_{k \in \bN} ] . \]
This map is clearly a group homomorphism.
Injectivity follows from the above asymptotics. Indeed, for any nontrivial word $w \in \mathbb{F}_2$ the linear growth of the Hofer norm implies that
\[\lim_{\cU} \frac{\| \Phi_k(w) \|_{\text{H} }}{k} \geq  \liminf_{k \to \infty} \frac{\| \Phi_k(w) \|_{\text{H} } }{ k}  > 0.\]

Below, we will make extensive use of the following definition:
\begin{definition}
\label{long}
We will say that a word $w \in \bF \langle V,H \rangle$ is \begin{it}long\end{it} if it is not conjugate to a power of $V$ or $H$.
\end{definition}

\subsection{The induced norm on \texorpdfstring{$\bF \left\langle V,H \right\rangle$}{F<V,H>}}
We now discuss the geometry of the embedding $\Phi : \bF_2 \hookrightarrow \text{Cone}_{\cU}(\text{Ham}(\Sigma \times M), d_{\text{H} })$.
Consider the induced conjugation invariant norm $c_{\cU,\Phi}$ on $\bF_2$ obtained by pulling back the conjugation invariant norm on $\text{Cone}_{\cU}(\text{Ham}(\Sigma \times M), d_{\text{H} })$ by the homomorphism $\Phi$.
Theorem \ref{cone} exhibits a major difference between the Hofer norm and the commutator length (and its induced norm) on the group of Hamiltonian diffeomorphisms, and boils down to the fact that $c_{\cU, \Phi}$ is indeed a norm, i.e $c_{\cU,\Phi}(w) > 0$ for any nontrivial word $w \in \bF_2$.

Banyaga proved \cite{banyaga1997} that $\text{Ham}(\Sigma  \times M)$ is a simple group - hence its commutator subgroup equals the entire group.
We can therefore define for any Hamiltonian diffeomorphism $\phi$ its commutator length $CL(\phi)$ as the smallest non negative integer $k$ such that one can write
\[ \phi = \prod_{j=1}^k \psi_j \eta_j \psi_j^{-1} \eta_j^{-1}, \qquad \psi_j, \eta_j \in \text{Ham}(\Sigma  \times M).\]
For a non-principal ultrafilter $\cU$, consider $\text{Cone}_{\cU}( \text{Ham}( \Sigma  \times M), d_{CL} )$, the asymptotic cone with respect to the metric $d_{CL}(\phi, \psi)=CL(\phi \circ \psi^{-1})$.
Observe that the metric $d_{CL}$ is bi-invariant.
\begin{claim}
The group $\text{Cone}_{\cU}( \text{Ham}( \Sigma  \times M), d_{CL} )$ is abelian.
\end{claim}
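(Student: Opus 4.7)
The plan is to exploit the obvious inequality $CL([\phi,\psi]) \le 1$, valid by definition for every single commutator in $\text{Ham}(\Sigma \times M)$, and to combine it with the fact that the group operation in the asymptotic cone is coordinate-wise multiplication (which relies on the bi-invariance of $d_{CL}$, already noted in the excerpt).

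More concretely, I would first recall that an element of $\text{Cone}_{\cU}(\text{Ham}(\Sigma \times M), d_{CL})$ is an equivalence class $[(\phi_j)_{j \in \bN}]$ with $CL(\phi_j)/j$ bounded, and that the group operation is $[(\phi_j)] \cdot [(\psi_j)] = [(\phi_j \psi_j)]$. To prove abelianness it therefore suffices to show that for any two such classes $\Phi = [(\phi_j)]$ and $\Psi = [(\psi_j)]$, their commutator $[\Phi,\Psi]$ is the identity in the cone. The coordinate-wise description makes this commutator equal to $[(\phi_j \psi_j \phi_j^{-1} \psi_j^{-1})]$, and by the very definition of the commutator length we have
\[ CL\bigl( \phi_j \psi_j \phi_j^{-1} \psi_j^{-1} \bigr) \le 1 \qquad \text{for every } j \in \bN. \]

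Dividing by $j$ and taking the ultralimit yields
\[ d_{CL,\cU}\bigl( [\Phi,\Psi],\, e \bigr) \;=\; \lim_{\cU} \frac{CL( \phi_j \psi_j \phi_j^{-1} \psi_j^{-1} )}{j} \;\le\; \lim_{\cU} \frac{1}{j} \;=\; 0, \]
so $[\Phi,\Psi]$ is the identity and the cone is abelian.

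I do not anticipate a genuine obstacle here; the only thing to be careful about is the verification that the coordinate-wise multiplication genuinely descends to a well-defined group law on the cone, but this is a standard consequence of bi-invariance (as referenced via \cite[Proposition 3.3]{Calegari2011} earlier in the excerpt) and does not require any input from symplectic topology. In particular, the proof uses nothing beyond the definition of $CL$ and Banyaga's perfectness theorem (which guarantees that $CL$ is finite on all of $\text{Ham}(\Sigma \times M)$, so that $d_{CL}$ is an honest metric). This is the sharp contrast with the Hofer case emphasized in the introduction: under $d_H$ commutators can have arbitrarily large norm, so the analogous argument fails and indeed Theorem \ref{cone} shows that $\text{Cone}_\cU(\text{Ham}, d_H)$ is highly non-abelian.
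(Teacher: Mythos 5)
Your proof is correct and takes essentially the same route as the paper: both hinge on the observation that $CL(\phi_j\psi_j\phi_j^{-1}\psi_j^{-1}) \le 1$ for every $j$, so the rescaled commutator distance vanishes in the ultralimit. The paper phrases it as $d_{CL}(\phi_j\psi_j,\psi_j\phi_j) \le 1$ implying $[(\phi_j\psi_j)] = [(\psi_j\phi_j)]$, while you show the commutator class equals the identity; these are the same argument in two equivalent formulations.
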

Indeed, take any two elements $[( \phi_j )_{j\in \bN} ], [ ( \psi_j )_{j\in \bN} ] \in \text{Cone}_{\cU}( \text{Ham}( \Sigma  \times M), d_{CL} )$.
Observe that
$d_{CL}( \phi_j \psi_j , \psi_j \phi_j) = CL( \phi_j \psi_j \phi_j^{-1} \psi_j^{-1} ) \leq 1$ for all $j \in \bN$.
It follows that
\[ \lim_{j \to \infty} \frac{ d_{CL}( \phi_j \psi_j, \psi_j \phi_j )}{j} =0, \]
and hence $[(\phi_j )_{j\in \bN} ] [ ( \psi_j )_{j\in \bN} ] = [ ( \phi_j \psi_j )_{j\in \bN} ] = [ ( \psi_j \phi_j )_{j\in \bN} ]= [ ( \psi_j )_{j\in \bN} ] [ (\phi_j )_{j\in \bN} ]$, as desired.
This is in sharp contrast with the extremely non-abelian nature of $\text{Cone}_{\cU}( \text{Ham}( \Sigma \times M), d_{\text{H}} )$, exhibited by the fact that it contains a homomorphic embedding of $\bF_2$.

For a long word $ w\in \bF \langle V,H \rangle$, define the quantity $\tau(w)$ as follows.
Choose a word of the form $H^{M_r}V^{N_r} \cdots H^{M_1}V^{N_1}$ in the conjugacy class of $w$, with exponents $N_j,M_j \neq 0$ for all $j=1, \ldots , r$.
Such a word is unique up to cyclic permutations (see Subsection \ref{subsecFindingFixedPts}).
Set
\begin{equation}\label{eq-tau}
\tau(w):=\min \{ |N_1|, \ldots , |N_r|, |M_1|, \ldots , |M_r| \}.
\end{equation}
Note that $\tau(w) > 0$ for any long word.

\begin{theorem}\label{bounds}
For the monomorphism $\Phi : \bF_2 \to \text{Cone}_{\cU}(\text{Ham}(\Sigma \times M), d_{\text{H} })$ considered above,
there exists a constant $A>0$ such that for every non-principal ultrafilter $\cU$ on $\bN$ and for every long word $w \in \bF_2$ we have
\[  A \, \tau(w) \leq c_{\cU,\Phi}(w). \]
\end{theorem}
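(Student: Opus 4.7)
Since $c_{\cU,\Phi}(w) = \lim_{\cU} \|\Phi_k(w)\|_H / k$ by construction, it suffices to prove a pointwise Hofer bound $\|\Phi_k(w)\|_H \geq A \tau(w) k$ holding uniformly for all long $w$ and all sufficiently large $k$, and then to divide by $k$ and pass to the ultralimit. Both $\tau$ and the Hofer norm are conjugation invariant, so we may assume $w$ itself is cyclically reduced, $w = H^{M_r} V^{N_r} \cdots H^{M_1} V^{N_1}$, in which case $\Phi_k(w) = \Phi_k(H)^{M_r}\Phi_k(V)^{N_r}\cdots \Phi_k(H)^{M_1}\Phi_k(V)^{N_1}$.

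The plan is to revisit the Floer-theoretic proof of Theorem~\ref{cone}, which already yields linear growth $\|\Phi_k(w)\|_H \geq c(w) k$, and to track how the slope $c(w)$ depends on $w$. The standard mechanism in this circle of ideas is to combine a conjugation-invariant, 1-Lipschitz functional $\mu : \Ham(\Sigma \times M) \to \bR$ coming from Hamiltonian Floer theory (an Oh--Schwarz spectral invariant, or a Polterovich--Shelukhin-type partial quasi-morphism) with a ping-pong setup on $\Sigma$, placing the supports of the Hamiltonians generating $\Phi_k(V)$ and $\Phi_k(H)$ in disjoint configurations with depth parameter $k$. The hypothesis $g \geq 4$ supplies enough topological room for such a configuration, while symplectic asphericity of $M$ makes the Floer package on $\Sigma \times M$ well defined. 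From $|\mu(\phi)| \leq \|\phi\|_H$, the task reduces to showing $|\mu(\Phi_k(w))| \geq A \tau(w) k$.

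This last estimate is the heart of the argument and the main obstacle. Evaluating $\mu$ along the cyclically reduced word, each syllable $V^{N_j}$ or $H^{M_j}$ contributes a spectral displacement proportional to its exponent times the depth parameter $k$, while the ping-pong geometry forbids more than a bounded amount of cancellation at each transition between consecutive syllables. The reason the minimum exponent $\tau(w)$ appears (rather than, say, the word length or a sum) is that the weakest syllable is the bottleneck: if some exponent falls below a universal threshold, ping-pong can fail at that transition and the corresponding spectral contribution may collapse, whereas if every exponent is at least $\tau(w)$ the argument produces at least $A \tau(w) k$ of non-cancelling displacement. Executing this estimate uniformly in $w$, i.e.\ producing a quantitative Floer-chain ping-pong argument in which generators associated to distinct cyclic configurations remain uniformly separated in action, is the hard step. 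Once achieved, dividing by $k$ and taking $\lim_\cU$ delivers $c_{\cU,\Phi}(w) \geq A \tau(w)$.
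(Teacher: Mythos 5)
Your proposal correctly identifies the reduction: it suffices to prove a pointwise bound $\|\Phi_k(w)\|_H \geq A\,\tau(w)\,k$ for $k$ large (depending on $w$), divide by $k$, and take $\lim_\cU$. But the mechanism you sketch is not the one the paper uses, and the proposal defers the actual estimate with the phrase ``executing this estimate uniformly in $w$ \ldots\ is the hard step,'' so as written there is a genuine gap.

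Concretely, you propose extracting the lower bound from a single conjugation-invariant 1-Lipschitz functional $\mu$ (an Oh--Schwarz spectral invariant or a Polterovich--Shelukhin-type quasi-morphism) together with a geometric ping-pong argument. The paper does something different: it passes to a balanced (cyclically reduced, alternating) representative, works in the filtered Floer complex of $\Phi_k(w)$ in a specific \emph{non-contractible} free homotopy class $\alpha(k,w)$, and computes all $2^{2r}$ fixed points in that class together with their actions and Conley--Zehnder indices. Since $HF_*(\Sigma)_\alpha = 0$ for $\alpha$ non-contractible, the unique generator $z_0$ of top CZ-index $r+1$ must satisfy $\partial z_0 \neq 0$; the action gap between $z_0$ and the CZ-index $r$ generators is then bounded below, and this gives a lower bound on Usher's boundary depth $\beta_\alpha(\Phi_k(w))$, which in turn bounds $\|\Phi_k(w)\|_H$. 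The boundary depth --- not a spectral number --- is the engine, and the product formula for boundary depth handles the stabilization by $M$.

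The appearance of $\tau(w) = \min_j\{|N_j|,|M_j|\}$ is also explained differently than you suggest. The action of a fixed point $z(\vec\eps)$ is $Lk\sum_j[\eps_{2j-2}N_j(1-\nu_j)^2 + \eps_{2j-1}M_j(1-\mu_j)^2] + O(1)$ with $\nu_j = 1/(2|N_j|)$, $\mu_j=1/(2|M_j|)$; passing from the top-index generator to an index-$r$ generator flips exactly one sign, changing the action by roughly $2Lk|N_j|(1-\nu_j)^2$ or $2Lk|M_j|(1-\mu_j)^2$. The smallest such single-flip gap is what controls the boundary depth, and this minimum is $\geq (Lk/2)\tau(w) + O(1)$. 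This is not a ``bottleneck syllable where ping-pong may fail'' heuristic --- every exponent is at least $1$ in a balanced word, and nothing collapses; the minimum simply comes from minimizing the action gap over the $2r$ possible sign flips. In particular, a naive spectral-invariant argument of the kind you sketch tends to detect only the abelianization $\eta(w)$ (as the paper notes in its discussion of Khanevsky's observation), and $\eta(w)=0$ for long words in the commutator subgroup, which is exactly why the paper has to go to boundary depth in a non-contractible class. To repair your proposal you would need to specify the functional $\mu$, verify that it is sensitive to long words with trivial abelianization, and produce the quantitative action-gap estimate --- at which point you would essentially be reproducing the paper's Floer-complex computation.
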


This difference between the Hofer norm and the commutator length was already apparent in Khanevsky's work \cite{Khanevsky2014} on simple commutators with arbitrarily large Hofer norm.

\subsection{Earlier results}
To prove the theorems stated above, we study the dynamics of Hamiltonian egg-beater maps, generalizing a construction of Polterovich and Shelukhin \cite{Polterovich2015}. They work on a symplectic surface $(\Sigma, \omega)$ of genus $g \geq 4$.
One of the main results of their paper can be reformulated as follows:

For the specific word $w=HV \in \bF_2\langle V,H \rangle$ one has $\| \Phi_k(w)\|_{H} \to \infty$ as $k \to \infty$, where $\Phi_k$ is the sequence of homomorphisms under consideration. In fact, Polterovich and Shelukhin prove a stronger result using refined invariants involving persistent homology and equivariance, which survives stabilization by a closed symplectically aspherical manifold.

\subsection{Relation to other semi-norms}
Let us consider two conjugation invariant norms that bound $c_{\cU,\Phi}$ from above.
The first one is
\[ c(w)=\limsup_{k \to \infty} \frac{\| \Phi_k(w) \|_{\text{H} } }{ k}.  \]
Another natural conjugation invariant norm on $\bF_2$ is word length with respect to the smallest conjugation-invariant set of generators containing $V,H,V^{-1}$ and $H^{-1}$.
Explicitly, if $w \in \bF_2=\bF \langle V,H \rangle $, set $||| w |||$ to be the smallest non negative integer $k$ such that one can write
\[ w= \prod_{j=1}^k h_j g_j h_j^{-1} \]
where $h_j \in \bF \langle V,H \rangle $ is any word and $g_j \in \{ V,H, V^{-1}, H^{-1} \}$.
It is obvious that $c_{\cU,\Phi}(w) \leq c(w) \leq B ||| w ||| $ for some constant $B > 0$.
Indeed, one can check that any conjugation invariant norm on $\bF_2$ is bounded from above by a constant times $||| \cdot |||$. Observe also that a word $w $ is long if and only if $|||w||| > 1$.

The following observation was communicated to us by M. Khanevsky:

Consider the semi-norm $\eta$ on $\mathbb{F} \langle V, H \rangle$ which on a word $w=H^{M_r}V^{N_r} \cdots H^{M_1}V^{N_1}$ takes the value
\[ \eta(w) = | \sum_{j=1}^r N_j | + | \sum_{j=1}^r M_j |. \]
This is the pullback of the $l^1$ norm on $\bZ^2$ by the abelianization homomorphism $\bF_2 \to \bF_2 / [\bF_2, \bF_2] \simeq \bZ^2$.
One can use the energy-capacity inequality (see for example \cite{Usher2010} or \cite{Lalonde1995}) on the universal cover of the surface to prove that there exists a constant $D>0$ such that $c_{\cU,\Phi}(w) \geq D \, \eta(w)$ for all words $w \in \mathbb{F} \langle V ,H \rangle$.
For words such that $\eta(w) > 0$ the linear growth $\| \Phi_k(w) \|_{\text{H} } \to \infty$ as $k \to \infty$ follows easily from this observation.
Additionally, the inequality $c_{\cU,\Phi} \geq D \eta$ implies that the norms $c_{\cU,\Phi}$ are stably unbounded.
Note, however, that this approach for proving linear growth of the Hofer norm fails whenever $\eta(w)=0$, such as words in the commutator subgroup.

\subsection{Idea of the proof}
The construction of the homomorphism $\Phi_k : \bF \langle V,H \rangle \to \text{Ham}(\Sigma \times M)$ is a modification of the construction of Hamiltonian egg-beater maps (see \cite{Polterovich2015}, \cite{Franjione1992}). Since the proof of the main theorem follows immediately from the proof for the case of a surface $(\Sigma,\omega)$ (i.e., when $M$ is a point), let us explain the construction for this case.
We construct two Hamiltonian diffeomorphisms of $(\Sigma, \omega)$, $f_V$ and $f_H$, which are symplectic shear maps around two annuli $C_V,C_H \subset \Sigma$ intersecting each other twice. We define $\Phi_k$ by mapping $V \mapsto f_V^k$ and $H \mapsto f_H^k$.
For a long word $w$, we use a theorem of Usher \cite{Usher2011}, stating that the boundary depth of the Floer barcode is a lower bound for the Hofer norm.
By a careful study of the dynamics of the Hamiltonian diffeomorphism $\Phi_k(w)$, we can extract enough information about its filtered Floer complex to bound the boundary depth by an action gap from below and hence by Usher's theorem we get a lower bound for the Hofer norm of $\Phi_k(w)$.
Furthermore, the action gap is shown to be proportional to $k$, hence proving the linear growth of $\| \Phi_k(w) \|_{\text{H} }$ as $k \to \infty$. For words conjugate to powers of $H$ or $V$ (which is the easy case), we give a proof using Floer homological spectral invariants.

The outline of the paper is the following: In Section \ref{setup} we set the stage, construct the egg-beater maps, state precisely the linear growth of the Hofer norm and derive Theorems \ref{cone} and \ref{bounds}. In Section \ref{Floer} we give a brief recap of Floer theory. In Section \ref{fixed} we carry out the computation of the fixed points of the egg-beater maps. Finally, in Section \ref{action-cz} we calculate the necessary Floer data to obtain the asymptotic bounds on the boundary depth.

\subsection{Acknowledgements}
This work arose from one of the projects in the computational symplectic topology graduate student team-based research program hosted by Tel-Aviv University and ICERM during the summer of 2015.
The authors are very grateful to the mentors of this program, S. Borman, R. Hind, M. Khanevsky, Y. Ostrover, L. Polterovich, and M. Usher for providing a stimulating work environment and to the host institutions for their warm hospitality.
We are especially thankful to M. Usher for his lectures on persistent homology, to E. Shelukhin and M. Khanevsky for helpful advice on the paper, and to L. Polterovich for providing invaluable guidance and insight throughout the project.

\section{The Egg-beater map}\label{setup}

\subsection{Setup}
We begin by describing the local model in which we will be working. Let $L>4$ and take two copies $C_H$ and $C_V$ of the annulus $C_0=[-1,1] \times \bR/ L \bZ$ (equipped with coordinates $(x,[y])$ and symplectic form $dx \wedge dy$). The squares $S_0=[-1,1] \times ([-1,1] + L\bZ)/ L \bZ$ and $S_1=[-1,1] \times ([L/2 -1, L/2 +1]+ L\bZ) / L \bZ$ give four squares: $S_{H,0},S_{H,1} \subset C_H$ and $S_{V,0}, S_{V,1} \subset C_V$. We define $(C, \omega_0)$ to be the symplectic surface with boundary obtained by gluing together $C_H$ and $C_V$ (see figure \ref{fig-C_H-C_V}) via the symplectomorphism $VH_{0,1} :S_{V,0} \sqcup S_{V,1} \to S_{H,0} \sqcup S_{H,1}$ given by
\[ VH_{0,1}=(VH_0:S_{V,0} \to S_{H,0}) \sqcup( VH_1:S_{V,1} \to S_{H,1}),\] \[ VH_0(x,[y])=(-y,[x]) , \qquad VH_1(x,[L/2+y])=(-y,[L/2+x]).\]

\begin{figure}[h]
\centering
\includegraphics[scale=1.0]{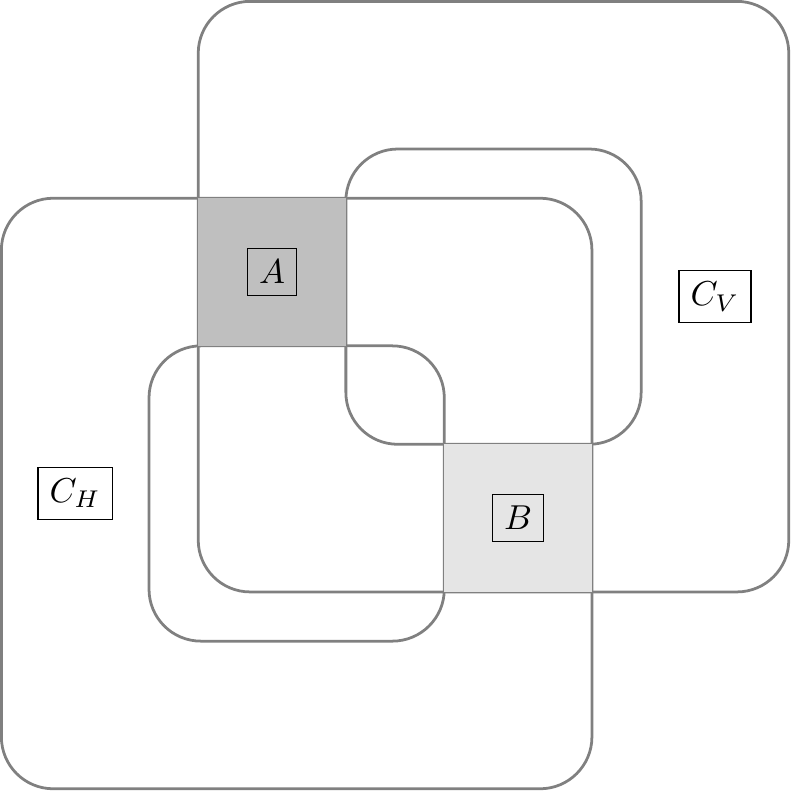}
\caption{The gluing of $C_H$ and $C_V$}
\label{fig-C_H-C_V}
\end{figure} 

We consider symplectic embeddings $(C, \omega_0) \hookrightarrow (\Sigma, \omega)$, where $(\Sigma,\omega)$ is a symplectic surface, such that the induced maps $\pi_0(\cL C) \to \pi_0(\cL \Sigma)$ and $\pi_1(C) \to \pi_1(\Sigma)$ are injective and such that each component of $\partial C$ separates $\Sigma$. Here and below we denote $\cL X=C^0(S^1,X)$, the free loop space of a space $X$. For example, embed $C$ symplectically into a large ball in $(\bR^2,dx \wedge dy)$, then embed the ball into $(S^2,\omega)$ for some area form $\omega$ and finally attach a $1-$handle to each of the four components of $S^2 \setminus  C$ to obtain a surface $\Sigma$ of genus $g=4$ with the required embedding $C \hookrightarrow \Sigma$. Up to some rescaling, this idea can be used to symplectically embed the region $C$ into any surface $(\Sigma, \omega)$ of genus $g \geq 4$.
\subsection{The shear map}\label{shear-map}
Consider the homeomorphism $f_{0}:C_0 \to C_0$ of the annulus $C_0=[-1,1] \times \bR/ L \bZ$ given by $f_{0}(x,[y])=(x,[y + 2L u_0(x)])$, where $u_0:[-1,1] \to \bR$ is the function $u_0(s)=1-|s|$ (see figure \ref{fig-u0}). 

\begin{figure}[h]
\centering
\includegraphics[scale=1.0]{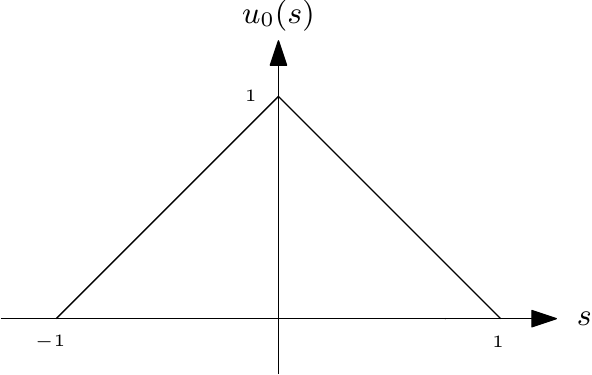}
\caption{The function $u_0$.}
\label{fig-u0}
\end{figure} 

In Sections \ref{fixed} and \ref{action-cz} below we will exploit the piecewise-linear nature of $f_{0}$ to study its dynamics explicitly. However, at the end of the day we wish to extract consequences about Hamiltonian diffeomorphisms. Therefore we will also consider $C^0-$close smoothings $f_{\delta}:C_0 \to C_0$ of $f_0$ as follows.

Replace $u_0$ by a $C^0-$close smoothing $u_{\delta} \in C^{\infty}([-1,1], \bR)$ supported in $(-1,1)$ with the following properties (see figure \ref{fig-u_delta}).
\begin{itemize}
\item $u=u_0$ outside of $U_\delta=(-1,-1+\delta) \cup(-\delta,\delta) \cup(1-\delta,1)$.
\item $0 \leq u_{\delta} \leq 1$.
\item $u_{\delta}(x) = u_{\delta}(-x)$.
\item The function $\int_{-1}^s u_{\delta}(\sigma)-u_0(\sigma) d\sigma$ is supported on $U_\delta$.
\item $|u_0(x) - u_\delta(x)| < \delta$ for all $x \in [-1,1]$.
\end{itemize}

\begin{figure}[h]
\centering
\includegraphics[scale=1.2]{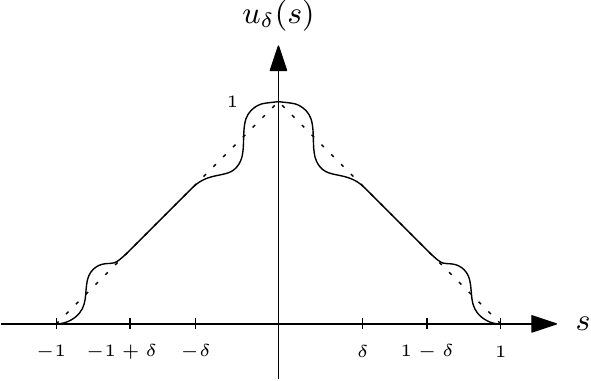}
\caption{The smoothing $u_\delta$.}
\label{fig-u_delta}
\end{figure} 

The formula $f_{\delta}(x,[y])=(x,[y+ 2L u_{\delta}(x)])$ yields a diffeomorphism $f_{\delta} :C_0 \rightarrow C_0$ which is the time-one map of the autonomous Hamiltonian flow $f_{\delta}^t$ corresponding to the function $h_{\delta}(x,[y])=-L + 2L \int_{-1}^x  u_{\delta}(\sigma)d\sigma$. Explicitly:
\[ f_{\delta}^t(x,[y])=(x,[y+ 2L t u_{\delta}(x) ] ).\]
The above conditions on $u_\delta$ will help us show that the relevant dynamics of $f^t_{\delta}$ can be extracted from the dynamics of $f^t_0$ for $\delta>0$ sufficiently small.
\subsection{The egg-beater homomorphism}\label{sec-eb-morphism}
The Hamiltonian flows $f^t_{\delta}$ determine two Hamiltonian flows $f^t_{H,\delta}$ and $f^t_{V,\delta}$ of $C$, by shearing either the annulus $C_H \subset C$ or the annulus $C_V \subset C$. We can extend these to the identity outside $\Sigma$ to obtain Hamiltonian diffeomorphisms $f^t_{V,\delta}$ and $f^t_{H,\delta}$ of $\Sigma$. Note that indeed they are time-one maps of autonomous Hamiltonian flows since each component of $\partial C$ separates $\Sigma$ and hence the functions $h_{H,\delta}$ and $h_{V, \delta} $ inducing the flows can be extended to $\Sigma$ by constants in a consistent way. Consider the sequence of homomorphisms
\[ \Phi_k: \mathbb{F}\langle V,H \rangle  \to \text{Ham}(\Sigma \times M), \quad V \mapsto f_{V,1/k}^{k} \times \id, \quad H \mapsto f_{H,1/k}^{k} \times \id.\]
The image of a general word $w=H^{M_r}V^{N_r} \cdots H^{M_1}V^{N_1}$ under the map $\Phi_k$ is the Hamiltonian diffeomorphism
\[ \Phi_k(w)= \left(f_{H, 1/k}^{kM_r} \circ f_{V, 1/k}^{kN_r} \circ \cdots  \circ f_{H, 1/k}^{kM_1}\circ  f_{V, 1/k}^{kN_1} \right) \times \id .\]
In Section \ref{action-cz} we prove the following fact, from which Theorem \ref{cone} follows.
\begin{theorem}\label{norm} Let $w \in \bF \langle V, H \rangle$ be any nontrivial word. Then there exist constants $C=C(w)>0$ and $k_0=k_0(w)$ such that \[  k> k_0 \quad \text{implies} \quad  \| \Phi_{k}(w) \|_{H}  \geq Ck.\]
\end{theorem}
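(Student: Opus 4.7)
My plan is to split the proof of Theorem \ref{norm} into two cases according to the type of the nontrivial word $w$. Since the Hofer norm is conjugation-invariant and, as remarked in the introduction, the case of a general closed symplectically aspherical $M$ follows once the case $M=\{\mathrm{pt}\}$ is established, I work throughout with $\Phi_k(w)\in\Ham(\Sigma,\omega)$ and a cyclically reduced representative of $w$. The nontrivial words split as: either $w$ is conjugate to $V^n$ or $H^n$ for some $n\neq 0$, or $w$ is long in the sense of Definition \ref{long}. These two subcases require separate Floer-theoretic arguments.

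For the power case, say $w$ conjugate to $V^n$ with $n\neq 0$ (the $H^n$ case being symmetric), I would produce a Floer-homological spectral invariant of $\Phi_k(V^n)=f_{V,1/k}^{kn}$ that grows linearly in $k$ and use the standard bound $|c(\alpha,\phi)|\leq\|\phi\|_{\text{H}}$. The map is a high iterate of the single shear $f_{V,1/k}$, whose piecewise-linear model translates each horizontal circle $\{x\}\times S^1\subset C_V$ by $2Lu_0(x)$. Lifting to the universal cover of $C_V$, which embeds injectively into $\Sigma$ by hypothesis on $C\hookrightarrow\Sigma$, the interior fixed points of $f_{V,1/k}^{kn}$ (those at heights $x$ where $2kn\,u_{1/k}(x)$ is an integer multiple of $L$) have actions scaling linearly in $k$, and a suitable Floer-homological spectral invariant detects these actions to yield $\|\Phi_k(V^n)\|_{\text{H}}\geq Ck$.

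For a long word $w=H^{M_r}V^{N_r}\cdots H^{M_1}V^{N_1}$ with all $N_j,M_j\neq 0$, my plan combines Usher's theorem $\|\phi\|_{\text{H}}\geq\beta(\phi)$ for the boundary depth $\beta$ of the filtered Floer complex with a detailed dynamical analysis of $\Phi_k(w)$. First, enumerate the fixed points of the piecewise-linear analogue of $\Phi_k(w)$ by their combinatorial itineraries through the four squares $S_{V,0},S_{V,1},S_{H,0},S_{H,1}$: on each itinerary the fixed-point equation is a linear system whose coefficients depend affinely on $k$, so the solutions and their actions can be written in closed form. Second, evaluate the Hamiltonian action and Conley--Zehnder index at these fixed points, aiming to show that the spread of action values between fixed points of the appropriate relative index is at least $C(w)\cdot k$ for some $C(w)>0$. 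Third, invoke the persistence module structure: once two generators of adjacent indices with action difference $\geq Ck$ exist and are connected by a Floer differential, they force a bar of length $\geq Ck$ in the barcode, so that $\beta(\Phi_k(w))\geq Ck$. Finally, transfer the estimate from the piecewise-linear $f_0$ to the smoothed $f_{1/k}$, using that the fixed points are transverse and persist under the $C^0$-small perturbation with $O(1/k)$ action shifts.

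The hard part of this program is the second step: producing an action gap that genuinely grows linearly in $k$ between two fixed points related by a differential in the Floer complex. The itinerary enumeration yields many fixed points, but one must verify that the linear-algebra coefficients from the fixed-point equations do not conspire to cancel the $k$-dependence in action differences, and that Conley--Zehnder indices match up so that a Floer differential can connect the relevant generators. This is precisely where the long-word hypothesis is essential, since for a conjugate power like $V^n$ the fixed points are not isolated but form a degenerate family along each horizontal circle, and the combinatorial argument collapses. The quantity $\tau(w)$ of \eqref{eq-tau} emerges naturally as the geometric constant controlling the action gap, foreshadowing the refined statement of Theorem \ref{bounds}.
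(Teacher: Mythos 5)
Your two-case decomposition (conjugate powers handled by spectral invariants, long words by boundary depth) is exactly the paper's architecture, and your plan for the long-word case — enumerate fixed points of the piecewise-linear model by itineraries, compute actions and Conley--Zehnder indices, extract an action gap of size $\sim k\,\tau(w)$, bound the boundary depth, then smooth — matches the paper step for step. But both cases, as you have sketched them, have a gap at the decisive moment.

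In the power case you focus on the \emph{interior} fixed points of $f_{V,1/k}^{kn}$ (winding number $\neq 0$), but those are noncontractible and come in degenerate circles' worth; the standard spectral invariants $c_\pm=c([N],\phi),c([\mathrm{pt}],\phi)$ invoked via ``$|c(\alpha,\phi)|\le\|\phi\|_{\rH}$'' are defined on the \emph{contractible} class and do not see these orbits. The paper's actual argument is the opposite: the autonomous flow of $knH_V$ has no contractible $1$-periodic orbits inside $C_V$, so $\spec(\phi)$ consists only of the values of $knH_V$ on the locally constant regions outside $C_V$, namely $\{\min knH_V,\max knH_V\}$, and the strict inequality $c_+(\phi)>c_-(\phi)$ from \eqref{eq-nu-positive} forces $c_\pm$ to be the two extremes, hence linear in $k$. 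As written, your sketch would need either a Morse--Bott perturbation and a nonstandard spectral invariant in a noncontractible class, or to be discarded in favor of the contractible-class spectrum argument.

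In the long-word case you correctly flag that the crux is getting two generators ``connected by a Floer differential,'' but you locate the difficulty in the linear algebra and in getting the CZ indices to be adjacent, which is necessary but not sufficient: you still need to show the differential is actually nonzero. The paper's key move, which you do not supply, is this: for the balanced word $w$ and the class $\alpha(k,w)$, the Conley--Zehnder formula \eqref{eq-CZ} shows there is a \emph{unique} fixed point $z_0$ of maximal index $r+1$ (the one with $\eps_{2j-2}\sign N_j=\eps_{2j-1}\sign M_j=1$ for all $j$). Since $z_0$ is of top index it is not a boundary, and since $\alpha$ is noncontractible and $\Sigma$ is $\alpha$-atoroidal we have $HF_*(\Sigma)_\alpha=0$ (Proposition 3.1), so $z_0$ cannot be a cycle either; hence $\partial z_0\neq0$. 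Uniqueness of $z_0$ in its degree also gives $\partial^{-1}(\partial z_0)=\{z_0\}$, so the action gap $\cA(z_0)-\cA(\partial z_0)$, estimated from \eqref{eq-action} as at least $\tfrac{Lk}{2}\min_j\{|N_j|,|M_j|\}+O(1)$, is a direct lower bound on $\beta_\alpha(\Phi_k(w))$ via \eqref{eq-bd-def}. Without the vanishing of total Floer homology in the noncontractible class your argument does not close; this is the one missing idea.
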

\begin{remark}\label{tau} We shall see in the course of the proof of Theorem \ref{norm} that for a long word $w$, the constant  $C(w)$ is proportional to the quantity $\tau(w)$ defined in Section \ref{introduction}. Hence Theorem \ref{bounds} also follows.
\end{remark}

\section{Preliminaries on Floer theory}\label{Floer}
\subsection{Floer theory in a general class}
Let $(M, \omega)$ be a symplectically aspherical closed manifold. Pick a free homotopy class $\alpha$ in $\pi_0(\mathcal{L}M)$, and denote by $\mathcal{L}_{\alpha}M$ the path-component of $\mathcal{L}M$ containing $\alpha$. We call $M$ $\alpha$-atoroidal if for any loop $\rho$ in $\mathcal{L}_{\alpha}M$, where $\rho$ is considered as a map from $T^2$ to $M$, we have
\[ \int_{T^2}\rho^*\omega = 0 \quad \text{and}  \quad \int_{T^2}\rho^*c_1 = 0, \]
where $c_1$ denotes the first Chern class of $(M,\omega)$. Let $H: S^1 \times M \rightarrow \bR$ be a smooth function. We use the convention that the corresponding Hamiltonian vector field $X_{H}$ is defined by the equation
\[  \iota_{X_H(t)}\omega= -dH_t, \]
where $H_t(p)=H(t,p)$. A $1-$periodic orbit of the corresponding Hamiltonian flow $\varphi_H^t$ is a map $x: S^1 \rightarrow M$ satisfying
\[ \frac{d}{dt} x(t) = X_H(t, x(t)). \]
We denote by $x_0=x(0)$ the corresponding fixed point of the time-one flow $\varphi_H^1$, where $S^1=\bR / \bZ$. Suppose that $M$ is $\alpha$-atoroidal, and let $\mathcal{P}_{\alpha}(H)$ denote the set of 1-periodic orbits $x(t)$ of $H$ in the class $\alpha$.  Assuming that $H$ is $\alpha$-non-degenerate (i.e. that for each solution $x(t) \in \mathcal{P}_{\alpha}(H)$ the linearization of the time-one flow $D\phi^1_H(x):T_{x_0}M \to T_{x_0}M$ does not have 1 as an eigenvalue), the set $\mathcal{P}_{\alpha}(H)$ is finite.  As a vector space over $\bZ / 2 \bZ$, the Floer chain complex $CF_{*}(H)_{\alpha}$ associated with $H$ and $\alpha$ is generated by the elements of $\mathcal{P}_{\alpha}(H)$.

Fix a choice of representative $\eta_{\alpha} \in \alpha$ and a trivialization of the symplectic bundle $\eta_{\alpha}^*TM$. The grading on $CF_*(H)_\alpha$ is given by the Conley-Zehnder index relative to these choices, which is well defined by virtue of $M$ being $\alpha$-atoroidal. We choose the convention such that for $C^2-$small Hamiltonians the Conley-Zehnder index agrees with the Morse index. The reader interested in further details can consult \cite{Polterovich2015}. The differential on $CF_*(H)_{\alpha}$ is given by counting solutions $u=u(s,t): \bR \times S^1 \rightarrow M$ to the Floer equation

\begin{equation}\label{floer} \partial_s(u) + J(t,u)(\partial_t(u) - X_H(t,u)) = 0
\end{equation}

where $J$ is a generic, time-dependent, $1-$periodic, $\omega$-compatible almost complex structure.  To be more precise, let $x(t)$ and $y(t)$ be two periodic orbits of index $k$ and $k-1$, respectively and let $n(x,y)$ be the number (mod $2$) of solutions $u$ of (\ref{floer}) connecting $x$ to $y$.  It is well known that the number of solutions (up to a reparametrization in the $s$ factor) is finite, so that $n(x,y)$ is well-defined.  The boundary operator for our chain complex is defined as

$$\partial_{H, \alpha}(x) = \sum_{y \in CF_{k-1}(H)_{\alpha}} n(x,y) y.$$

It well known that for a generic $\omega-$compatible almost complex structure $J$, we have $\partial_{H, \alpha} \circ \partial_{H, \alpha}= 0$ so that $CF_{*}(H)_{\alpha}$ is indeed a chain complex and its homology (denoted by $HF_{*}(M)_{\alpha}$ and referred to as the $total$ Floer homology) is an invariant of the manifold $(M, \omega)$ and the class $\alpha$. In particular, $HF_{*}(M)_{\alpha}$  is independent of the Hamiltonian function $H$. The following fact is then immediate by considering a small perturbation of the (degenerate) Hamiltonian $H=0$.

\begin{proposition}  Let $\alpha$ be any non-contractible class, and assume $(M,\omega)$ is $\alpha$-atoroidal.  Then we have $HF_{*}(M)_{\alpha} = 0$.
\end{proposition}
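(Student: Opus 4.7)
The plan is to exploit the invariance of $HF_*(M)_\alpha$ under change of (generic) Hamiltonian and to pick a particularly convenient representative for which $\mathcal{P}_\alpha(H)$ is simply empty. Concretely, fix a Morse function $f \in C^\infty(M)$ and consider the autonomous (time-independent) Hamiltonian $H_\epsilon := \epsilon f$, with $\epsilon > 0$ a small parameter to be chosen.

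The key input is the standard small-Hamiltonian lemma: for $\epsilon$ sufficiently small, every 1-periodic orbit of $H_\epsilon$ is a constant loop at a critical point of $f$. Indeed, since $\varphi^1_{\epsilon f} = \varphi^\epsilon_f$, any non-constant 1-periodic orbit would give rise to a closed orbit of the autonomous flow of $X_f$ with period $\epsilon$. A compactness argument on the compact manifold $M$ forces such orbits to accumulate on a critical point, and the Morse condition (which makes the linearized time-$\epsilon$ flow $\exp(\epsilon\,\omega^{-1}\mathrm{Hess}(f))$ transverse to the identity for generic small $\epsilon$) rules out nearby non-constant fixed points. Granted this, $\mathcal{P}_\alpha(H_\epsilon)$ is empty for our non-contractible class $\alpha$, since every constant loop is contractible.

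With no generators, $CF_*(H_\epsilon)_\alpha$ is the zero chain complex, and the $\alpha$-non-degeneracy condition from the definition is vacuously satisfied. Its homology is therefore zero. By the invariance of the total Floer homology under the choice of (admissible) Hamiltonian, this yields $HF_*(M)_\alpha = 0$. The only genuine technical point is the small-Hamiltonian lemma invoked above, which is routine; I do not expect any real obstacle beyond it.
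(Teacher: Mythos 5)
Your argument is exactly the one the paper has in mind: it asserts the result is ``immediate by considering a small perturbation of the (degenerate) Hamiltonian $H=0$,'' and your choice $H_\epsilon = \epsilon f$ with $f$ Morse is that perturbation, with the point being that for $\epsilon$ small all $1$-periodic orbits are constant and hence contractible, so $CF_*(H_\epsilon)_\alpha = 0$. The write-up is correct; one small refinement is that non-degeneracy of the constant orbits holds for \emph{all} sufficiently small $\epsilon>0$ (not merely generic small $\epsilon$), since the eigenvalues of $\omega^{-1}\mathrm{Hess}(f)$ are non-zero at a Morse critical point.
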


A filtration for the chain complex is given by the action functional $\mathcal{A}_H$ from $\mathcal LM$ to $\R$. Define the action of a periodic orbit $x \in \mathcal{P}_{\alpha}(H)$ by

$$\mathcal{A}_H(x) = \int^1_0H(t,x(t)) dt - \int_{\bar{x}}\omega,$$

where $\bar{x}:S^1 \times [0,1] \to M$ is a homotopy from the fixed representative $\eta_{\alpha} \in \alpha$ to $x$. Just as with the Conley-Zehnder index, $\mathcal{A}_H(x)$ is well-defined by virtue of $(M,\omega)$ being $\alpha-$atoroidal. Observe that a curve $x:S^1 \to M$ is an element of $\cP_{\alpha}(M)$ if and only if it is a critical point of $\cA_H$.

One can extend the action functional to the whole $CF_{*}(H)_{\alpha}$ by setting
\[ \mathcal{A}_H(0) = -\infty \quad \text{and} \quad \mathcal{A}_H(\sum a_j x_j) = \text{max} \{\mathcal{A}_H(x_j) \, \,  | \, \, a_j \neq 0 \}. \]

This allows us to define the filtration $CF^{(-\infty,a)}(H)_{\alpha} := \{c \in CF_*(H)_{\alpha} | \mathcal{A}_H(c) < a \}$. Standard energy-action estimates show that $\mathcal{A}_H(\partial c) < \mathcal{A}_H(c)$ for any $c \in CF_*(H)_{\alpha}$, so that the differential on $CF_*(H)_{\alpha}$ induces a differential on $CF_{*}^{(-\infty,a)}(H)_{\alpha}$. The guiding principle is that, although the total Floer homology in a non-contractible class vanishes, the filtered Floer homology does not. Hence, geometric information can be extracted by studying the filtered Floer complex. For a more detailed discussion the interested reader can consult \cite{Polterovich2015}.
\subsection{Boundary depth}
Given a class $\alpha \in \pi_0(\cL M)$ and an $\alpha$-non-degenerate Hamiltonian $H$, M. Usher defined in \cite{Usher2009} a quantity called the boundary depth, denoted by $\beta_{\alpha}(H)$, in terms of the associated filtered Floer chain complex.
The boundary depth can be defined (see \cite[page 50]{Usher2014}) as the maximal action difference between any boundary in $CF_*(H)_{\alpha}$ and its primitive of lowest action, that is
\begin{equation}\label{eq-bd-def}
\beta_{\alpha}(H) = \sup_{0\neq x \in \text{Im} (\partial_{H, \alpha})} \left( \inf_{y \in \partial_{H, \alpha}^{-1}(x)} \bigl(\cA_H(y)-\cA_H(x) \bigr)\right).
\end{equation}
In the same paper, Usher shows that the quantity $\beta_{\alpha}(H)$ is independent the auxiliary data of the complex (such as the choice of generic almost complex structure $J$). Moreover, he proves the following properties of the boundary depth:
\begin{enumerate}
\item\label{item-1} Given two Hamiltonians $H$ and $H'$ generating the same time-one flow, we have $\beta_\alpha(H) = \beta_\alpha(H')$. Therefore, we may define unambiguously $\beta_\alpha(\phi)$ for $\phi \in \Ham(M,\omega)$.
\item\label{item-2} For any Hamiltonian diffeomorphism $\phi \in \text{Ham}(M, \omega)$, we have
  \begin{equation}\label{bdydepth}\beta_{\alpha}(\phi) \leq \|\phi\|_{\text{H}}.\end{equation}
\item\label{item-3} Given two symplectic manifold $(M, \omega_M)$ and $(N, \omega_N)$, and denoting by $\alpha_0 \in \pi_0(\cL N)$ the trivial class, we have
$$
\beta_{\alpha\times \alpha_0}(f \times \id) \geq \beta_{\alpha}(f) \label{eq-bd-product}
$$
\end{enumerate}
Proofs of these properties can be found in \cite{Usher2011}. Item \ref{item-2} is an obvious consequence of Corollary 1.5, which proves the same bound for the supremum of $\beta_{\alpha}(\phi)$ over all loop-classes $\alpha$. Finally, item \ref{item-3} follows from Theorem 8.5(b) (using the standard fact that the Hamiltonian Floer homology of any Hamiltonian diffeomorphism in the class $\alpha_0$ is non-zero).



\subsection{Spectral invariants in Floer homology}
Here we simply recall some properties of spectral invariants in Hamiltonian Floer homology (see \cite{Oh2005, Schwarz2000}) in our setting. Suppose that $(N, \Omega)$ is a closed, symplectically aspherical manifold. Then we have well defined invariants $c(a,\phi)$ for $0\neq a \in H_*(N)$ and $\phi \in \Ham(N)$, which are $1$-Lipschitz in the Hofer metric and satisfy the spectrality property $c(a,\phi) \in \spec (\phi)$ for \emph{any} $\phi \in \Ham(N)$, where $\spec(\phi)$ is the action spectrum of $\phi$. Of special use to use will be the invariants
$$
c_+ (\phi ): = c([N], \phi) \quad \text{and} \quad c_-(\phi) :=  c([\text{point}], \phi).
$$
They satisfy the following non-degeneracy property:
\begin{equation}\label{eq-nu-positive}
c_+(\phi) - c_-( \phi) > 0.
\end{equation}
Proofs of these properties, as well as further details, can be found in \cite{Schwarz2000}.

\section{Fixed points of egg-beater maps}\label{fixed}

In this section we find all the fixed points of the egg-beater map in certain carefully chosen free homotopy classes. Throughout this section, we work on a symplectic surface $(\Sigma, \omega)$ of genus $g \geq 4$ (that is, assume that $M$ is a point).

\subsection{Words and homotopy classes}
Recall from Section \ref{setup} that our local model $C \subset \Sigma$ was obtained by gluing two annuli $C_H$ and $C_V$ along the symplectomorphisms
\[ VH_0:S_{V,0} \to S_{H,0} , \quad VH_1:S_{V,1} \to S_{H,1}.\]
By assumption the maps $\pi_1(C) \to \pi_1(\Sigma)$ and $\pi_0(\cL C) \to \pi_0(\cL \Sigma)$ are injective. This will allow us to carry out our topological analysis in the local model $C$ instead of in the surface $\Sigma$ without losing any information. By abusing notation, we will go back and forth between $C$ and $\Sigma$ whenever this is convenient.
Let $A \subset C$ be the region corresponding to the squares $S_{V,0}, S_{H,0}$ and let $B \subset C$ be the region corresponding to the squares $S_{V,1}, S_{H,1}$. Denote by $\gamma_1,\gamma_2, \gamma_3,\gamma_4$ the paths in $C$ (see figure \ref{fig-gammas}) given by
\[\gamma_1 : [0,L/2] \to C_V, \quad t \mapsto (0,[t] ), \quad \gamma_2 : [0,L/2] \to C_V, \quad t \mapsto (0,[t+L/2] ) \]
\[\gamma_3 : [0,L/2] \to C_H, \quad t \mapsto (0,[t] ), \quad \gamma_4 : [0,L/2] \to C_H, \quad t \mapsto (0,[t+L/2] ). \]
Consider the concatenations $\gamma_a=\gamma_1 \# \gamma_2$, $\gamma_b=\gamma_3 \# \gamma_4$ and $\gamma_c=\gamma_3 \# \gamma_2$. These are loops in $C$ based at $(0,[0]) \in A$. Denote by $a,b,c$ the corresponding homotopy classes in $\pi_1(C,A)$. Here and below we use the contractible region $A$ as our basepoint for the fundamental group.

\begin{figure}[h]
\centering
\includegraphics[scale=1.0]{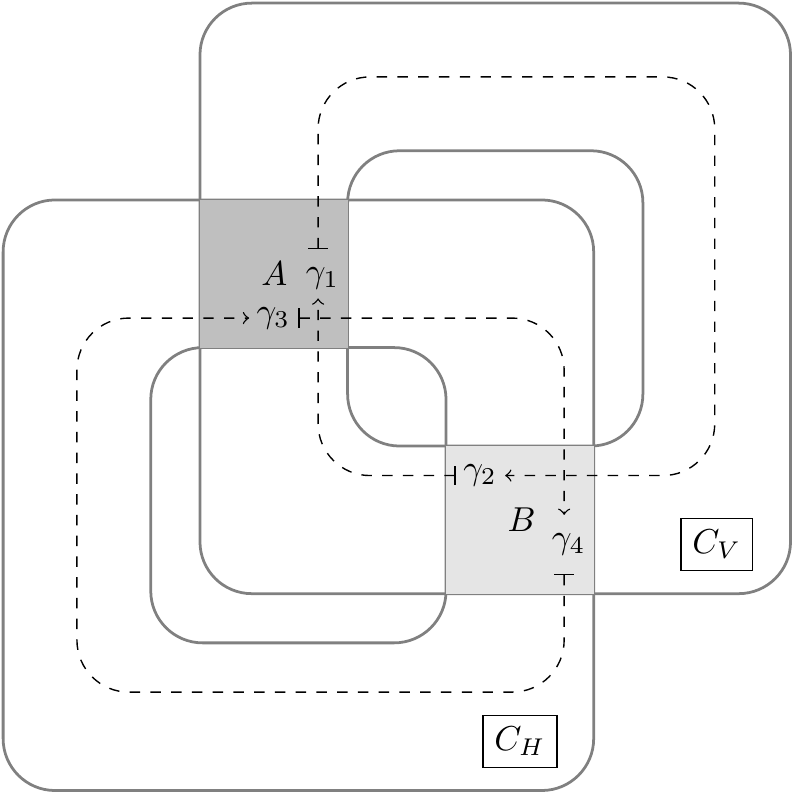}
\caption{The paths $\gamma_i$}
\label{fig-gammas}
\end{figure}

By contracting $C$ onto a graph we see that  $\pi_1(C,A) $ is isomorphic to $\bF \langle a, b, c \rangle$. Recall that the natural map $\pi_1(C,A) \to \pi_0(\cL C)$ is surjective and establishes a bijection between conjugacy classes in $\pi_1(C,A)$ and elements of $\pi_0(\cL C)$. Informally, we write $\pi_0(\cL C) \sim \pi_1(C,A)/\text{conj}$. Therefore we will think of homotopy classes based in $A$ as words in $a,b,c$ and free homotopy classes as words in $a,b,c$ modulo conjugation.

We call a reduced word $w \in \bF_2 \langle V,H \rangle$ \emph{balanced} if it has the form
\begin{equation}\label{eq-word-bal}
  w =  H^{M_r} V^{N_r} \cdots  H^{M_1} V^{N_1},
\end{equation}
with $r\geq 1$ and $N_j \neq 0$, $M_j \neq 0$ for all $1\leq j \leq r$. Observe that any word in $\mathbb{F} \langle V, H \rangle$ which is long in the sense of Definition \ref{long} is conjugate to a balanced word. By the conjugation invariance of the Hofer norm this implies that we may assume that any long word is balanced.

Recall that a word in a free group is called \emph{cyclically reduced} if all of its cyclic permutations are reduced, or equivalently, if it is reduced and does not begin and end with a letter and its inverse. It is well-known (see e.g. \cite{Cohen1989})  that conjugacy classes in a free group are classified by cyclically reduced words, up to cyclic permutations.

For the remainder of Section \ref{fixed} and throughout Section \ref{action-cz} we fix a balanced word $w \in \bF \langle V,H \rangle$ as above.

\begin{definition}\label{compatible}We call a based homotopy class $\talpha \in \pi_1(C,A) \simeq \bF \langle a,b,c, \rangle$ \emph{compatible} with $w$ if it is a cyclically reduced word of the form
  \begin{equation}\label{eq-compatible-class}
    \talpha = a^{n_1} b^{m_1} \cdots a^{n_r} b^{m_r},
  \end{equation}
  where $\sign(m_j) = \sign(M_j)$ and $\sign(n_j) = \sign(N_j)$ for all $1 \leq j \leq r$. We call a free homotopy class $\alpha \in \pi_0(\cL C) \simeq \bF \langle a,b,c, \rangle / \text{conj}$ \emph{compatible} with $w$ if it admits some representative $\talpha$ as above compatible with $w$.
\end{definition}

\subsection{Calculation of the fixed points}

\label{subsecFindingFixedPts}
Recall our egg-beater homomorphisms $\Phi_k$ defined in Section \ref{sec-eb-morphism}. To study the Floer complex of $\Phi_k(w)$ in a compatible free homotopy class $\alpha$, we first need to find all the fixed points of $\Phi_k(w)$ in the class $\alpha $. We will achieve this by finding the fixed points of a piecewise-linear version $\phi = \phi_{k,w}$ of $\Phi_k(w)$ instead. Later we will prove that for $k$ large $\phi$ and $\Phi_k(w)$ have precisely the same fixed points. Explicitly, let $f_V=f_{V,0}$ and $f_H=f_{H,0}$ be the piecewise-linear egg beater maps about $C_V$ and $C_H$. We define
\[ \phi = f_H^{M_r} \circ f_V^{N_r} \circ \cdots \circ f_H^{M_1} \circ f_V^{N_1}, \qquad \text{where} \, \, \, w=H^{M_r} V^{N_r} \cdots  H^{M_1} V^{N_1}. \]

Observe that $\Phi_k(w)$ is a smoothing of $\phi$. We will study the dynamics of the following explicit piecewise-linear Hamiltonian isotopy $\phi^t=\phi^t_{k,w}$ generating  $\phi=\phi_{k,w}$, where the pound sign means concatenation of isotopies.
\begin{equation}
  \phi^t:= \{f_V^{N_1 t}\}\# \{f_H^{M_1 t} \circ f_V^{N_1} \} \# \cdots \# \{f_H^{M_r t} \circ f_V^{N_r} \cdots \circ f_V^{ N_1} \}.\label{eq-phi^t}
\end{equation}
Our goal is to find fixed points of $\phi$ in a fixed compatible free homotopy class $\alpha$, to be specified later. That is, we seek solutions to the following system of equations.
\begin{align}
  &\phi(z) = z,\label{eq-fix-1}\\
  &[\{\phi^t(z)\}]_{\pi_0(\cL C)} = \alpha  \label{eq-hom-free}.
\end{align}

We begin by translating equation \eqref{eq-hom-free} from $\pi_0(\cL C)$ to $\pi_1(C,A)$. Given a fixed point $z$, we call the paths
$$
\{f_V^{N_1 t}(z)\},\, \{f_H^{M_1 t} \circ f_V^{N_1}(z) )\},\, \ldots , \{f_H^{M_r t}  \circ f_V^{N_r} \circ \cdots \circ f_V^{ N_1} (z)\}
$$
the \emph{intermediate paths} of $\{\phi^t (z)\}$, and we call the endpoints $z_1, \ldots, z_{2r}=z$ of the intermediate paths the \emph{intermediate points}. Our first observation is that all the fixed points and their intermediate points corresponding to a solution of \eqref{eq-fix-1} and \eqref{eq-hom-free} lie in $A$.

\begin{lemma}\label{lem-inter-A}
  Given a fixed point $z$ of $\phi$ in a compatible class $\alpha \in \pi_0(\cL C)$, all of its intermediate points lie in the region $A$. Moreover, the based homotopy class $[\{\phi^t(z)\}] \in \pi_1(C,A)$ is compatible with $w$.
\end{lemma}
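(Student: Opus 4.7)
The plan is to exploit the rigid homotopical constraint of compatibility---that $\alpha$ is represented by an alternating word in $a$ and $b$ alone (no $c$-generator)---to pin down the intermediate points. First I would observe that each intermediate path $p_{2j-1} = \{f_V^{N_j t}(z_{2j-2})\}$ lies entirely in $C_V$ at a fixed $x$-coordinate, and analogously each $H$-path lies in $C_H$; this follows directly from the formula $f_0(x,[y]) = (x,[y+2Lu_0(x)])$ and the extension by the identity outside the respective annulus. Next I would show that every intermediate point must lie in the overlap $A \cup B = C_V \cap C_H$: were some $z_{2j-1}$ in $C_V \setminus (A\cup B)$, the next shear $f_H^{M_j}$ would fix it, making $p_{2j}$ constant and its homotopical contribution trivial---but compatibility with the balanced word $w$ (where $M_j \neq 0$) requires a nontrivial $b^{m_j}$ at this step, a contradiction. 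A symmetric argument disposes of the analogous case in $C_H$.

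The crucial step is to forbid ``half-loops''. Suppose an intermediate $V$-path connected $A$ to $B$: up to powers of $a$, such a path is homotopic relative to its endpoints in $C_V$ to $\gamma_1$ or $\gamma_2^{-1}$, the two halves of $C_V$ joining $A$ and $B$. By the previous step, the subsequent $H$-path also has its endpoints in $A\cup B$; if it crosses back from $B$ to $A$ it must be, up to powers of $b$, a half $\gamma_3$ or $\gamma_4^{-1}$. Every such $V$-half-loop concatenated with an $H$-half-loop is conjugate to $c^{\pm 1}$ modulo powers of $a$ and $b$ in the fundamental groupoid on $\{A,B\}$ (using the defining relation $\gamma_c = \gamma_3 \# \gamma_2$). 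Thus the full based class would contain a $c^{\pm 1}$, contradicting compatibility of $\alpha$ with $w$. Hence each intermediate path joins points in the same cell of $\{A,B\}$, and iterating, all intermediate points lie in a single cell; by the $A \leftrightarrow B$ symmetry of the construction (translating both $y$-coordinates by $L/2$) we may take this cell to be $A$, which gives the first assertion.

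For the second assertion, with all $z_j \in A$ I would read off the based class directly. Writing $z_{2j-2} = (x_j, y_{j-1})$ and $z_{2j-1} = (x_j, y_j)$ in $C_V$-coordinates, both $y_{j-1}, y_j \in [-1,1]$ and $y_j \equiv y_{j-1} + 2L N_j u_0(x_j) \pmod{L}$, so the $V$-path lifts to a strip in the universal cover of $C_V$ with net $y$-translation $2LN_j u_0(x_j)$; combined with $L>4$, this determines a unique integer winding number $n_j := 2N_j u_0(x_j) + (y_{j-1}-y_j)/L$, and the path contributes $a^{n_j}$ to the based class. Compatibility forces $n_j \neq 0$, and then $\sign(n_j) = \sign(N_j)$ since $u_0 \geq 0$. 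The analogous analysis of each $H$-path produces $m_j$ with $\sign(m_j) = \sign(M_j)$ and contribution $b^{m_j}$. Concatenating the $2r$ contributions in the temporal order traced by $\phi^t(z)$ yields the based class $a^{n_1} b^{m_1} \cdots a^{n_r} b^{m_r}$, exactly a compatible representative of $\alpha$. The main obstacle is making the half-loop exclusion in the middle paragraph fully rigorous: one must carefully track paths in the fundamental groupoid based on $\{A,B\}$, using relations such as $\gamma_c = \gamma_3 \# \gamma_2$, to verify that every mixed $A$-to-$B$-to-$A$ round trip picks up a genuine $c$-generator.
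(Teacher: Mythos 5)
Your overall strategy---exploit the absence of the $c$-generator in any compatible cyclically reduced word to force all intermediate points out of $B$ and into $A$---is exactly the mechanism the paper uses, but your implementation leaves genuine gaps. The most serious is the ``by the $A\leftrightarrow B$ symmetry of the construction\ldots{} we may take this cell to be $A$'' step: compatibility is \emph{not} symmetric under swapping $A$ and $B$, because the generators $a$, $b$, $c$ are all defined as loops based at a point of $A$. Indeed, if every intermediate point lay in $B$, then using base paths homotopic to $\gamma_2$ one computes the based class to be $a^{n_1}c^{-1}b^{m_1}c\,a^{n_2}c^{-1}b^{m_2}c\cdots a^{n_r}c^{-1}b^{m_r}c$, which is already cyclically reduced (when the $n_j,m_j\neq 0$) and contains $c$'s---so it is \emph{not} compatible, and the all-in-$B$ case must be excluded by this computation, not waved away by symmetry. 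Your half-loop analysis is also incomplete: you only treat the case of a $V$-path crossing $A\to B$ \emph{followed by} an $H$-path crossing $B\to A$, and you don't handle an $H$-path that stays in $B$, a $V$-path crossing $B\to A$, or the possibility that the introduced $c^{\pm1}$ cancels during cyclic reduction. Similarly, in your step ruling out points of $C_V\setminus(A\cup B)$, the claim that ``compatibility\ldots requires a nontrivial $b^{m_j}$ at this step'' needs an explicit syllable-counting argument (a word with $\leq 2r-2$ alternating blocks cannot be conjugate to one with $2r$).

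The paper sidesteps all this case analysis by making one uniform choice of base paths $\beta_i$ from each intermediate point $z_i$ to the basepoint $(0,[0])\in A$, with $\beta_i\sim\gamma_2$ whenever $z_i\in B$ and $\beta_i$ staying in the appropriate component of $C\setminus B$ otherwise. This writes $[\{\phi^t(z)\}]$ directly as $\prod_j a^{n_j}c^{-\varepsilon_j}b^{m_j}c^{\nu_j}$, where the boolean $\varepsilon_j,\nu_j$ and the vanishing/sign of $n_j,m_j$ are read off \emph{positionally} from the $z_i$ (e.g.\ $\varepsilon_j=1$ iff $z_{2j-1}\in B$, $n_j=0$ if $z_{2j-2}\notin C_V$). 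Then the uniqueness of cyclically reduced representatives up to cyclic permutation forces all $\varepsilon_j=\nu_j=0$ and $n_j,m_j\neq 0$ in one stroke, from which $z_i\in A$ and the compatibility of the based class follow. If you want to salvage your geometric approach, the key missing ingredient is precisely this systematic choice of $\beta_i$, which turns your informal ``picks up a $c$'' into a precise algebraic statement that survives cyclic reduction.
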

\begin{proof}
  Let $z \in C$ be a fixed point of $\phi$ such that $[\{\phi^t(z)\}]_{\pi_0(\cL C)} = \alpha$. Recall that $a=[\gamma_1 \# \gamma_2]$, $b=[\gamma_3 \# \gamma_4]$ and $c=[\gamma_3 \# \gamma_2]$ freely generate $\pi(C,A) \simeq \bF \langle a,b,c \rangle$. For each of the intermediate points $z_0,z_1, \ldots , z_{2r-1}$ of the path $\{ \phi^t(z)\}$, choose a path $\beta_i$ from $z_i$ to the basepoint $(0,[0]) \in A$. More precisely, we choose the paths $\beta_i$ so that (see figure \ref{fig-betas}),
  \begin{itemize}
  \item If $z_i \in C_H \setminus C_V$ then $ \beta_i([0,1]) \subset C_H \setminus B$.
  \item If  $z_i \in C_V \setminus C_H$ then $\beta_i([0,1]) \subset C_V \setminus B$.
  \item If $z_i \in A$ then $\beta_i([0,1]) \subset A$.
  \item If $z_i \in B$ then $\beta_i \sim \gamma_2$ as paths $([0,1],0,1) \to (C,B,A)$.
  \end{itemize}
  
\begin{figure}[h]
\centering
\includegraphics[scale=1.0]{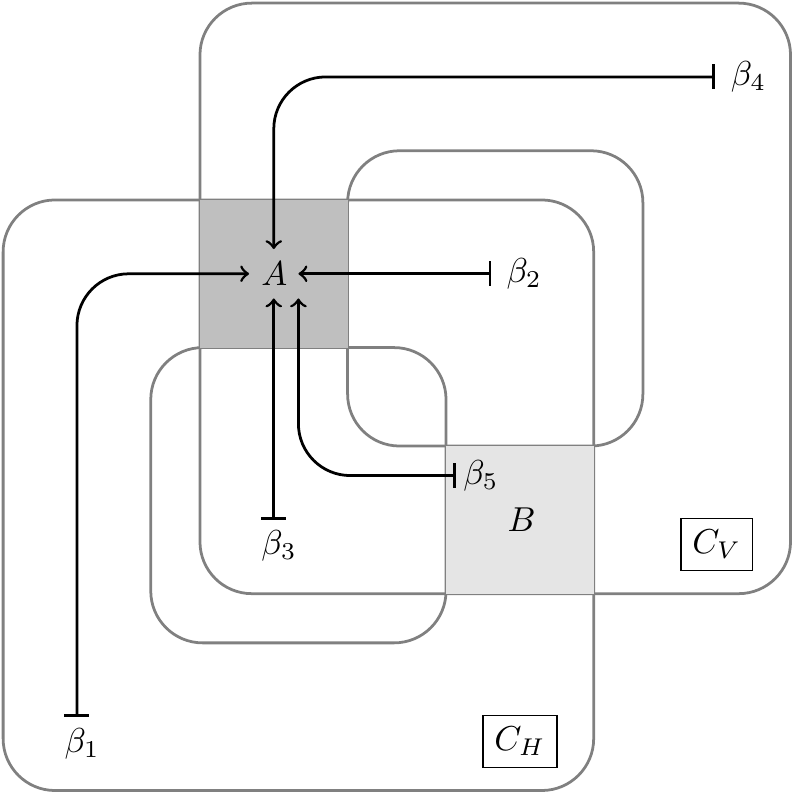}
\caption{The paths $\beta_i$ for the different possible starting points}
\label{fig-betas}
\end{figure} 
  
  For every intermediate path $\{ f_V^{N_j t}(z_{2j-2}) \}$, or $\{ f_H^{M_j t}(z_{2j-1}) \}$ we associate an element in $\pi_1(C,A)$,
  \[l_{2j-2}=[\beta_{2j-2}^{-1}\# \{f_V^{N_j t}(z_{2j-2})\}\# \beta_{2j-1}],\]
  \[ l_{2j-1} = [ \beta_{2j-1}^{-1}\#\{ f_H^{M_j t}(z_{2j-1})\}\# \beta_{2j}] .\]
  Thus we express the class $[ \phi^t(z)] \in  \pi_0(\cL C)$ as the conjugacy class of the product $l_0 \cdots l_{2r-1} \in \pi_1(C,A)$, where we have
  \[ l_{2j-2} = a^{n_j},\]
  \[ l_{2j-1} = c^{-\varepsilon_j} b^{m_j} c^{\nu_j} ,\]
  for some $n_j,m_j \in \bZ$, and $\varepsilon_j,\nu_j \in \{0,1\}$. Note the following observations:
  \begin{itemize}
  \item $n_j \neq 0$ implies $\sign(n_j)=\sign(N_j)$.
  \item $m_j \neq 0$ implies $\sign(m_j)=\sign(M_j)$.
  \item $z_{2j-1} \not\in B$ if and only if $\varepsilon_j=0$.
  \item $z_{2j} \not \in B$ if and only if $\nu_j=0$.
  \item $z_{2j-2} \notin C_V  $ implies $n_j=0$. \item $z_{2j-1} \notin C_H$ implies $m_j=0$. \item $z_{2j} \notin C_H$ implies $m_{j}=0$.  \item$z_{2j-1} \notin C_V$ implies $n_{j} =0$.
  \end{itemize}
  By hypothesis, $[\{ \phi^t(z)\}]_{\pi_0(\cL C) }$ is compatible with $w$. Therefore the product
  \[  a^{n_1}c^{\varepsilon_1}b^{m_1}c^{\nu_1}\ldots a^{n_r}c^{\varepsilon_r}b^{m_r}c^{\nu_r} \]
  is in the same conjugacy class as some word in $\mathbb{F} \langle a,b,c \rangle$ compatible with $w$. Since conjugacy classes are classified by cyclically reduced words up to cyclic permutation, we must have $\varepsilon_j=\nu_j=0$ and $n_j,m_j \neq 0$ for all $j$. The above observations imply that $z_j \in C_H \cap C_V$ and $z_j \notin B$, hence $z_j \in A$ for all $j$. Moreover, $[\phi^t(z_0)]_{\pi_1(C,A)}$ is compatible with $w$. This completes the proof. \end{proof}

Let $\widetilde{\alpha}=a^{n_1}b^{m_1} \cdots a^{n_r} b^{m_r}$ be any given representative of a compatible class $\alpha \in \pi_0(\cL)$. Suppose that $[\{\phi^t(z)\}]_{\pi_0(\cL C)}=\alpha$. Then, the based homotopy class $[\{\phi^t (z)\}]_{\pi_1(C,A)}$ must be one of the classes
$$
\widetilde{\alpha}_j = a^{n_{j+1}}b^{m_{j+1}} \cdots a^{n_r}b^{m_r} a^{n_1} b^{m_1} \cdots a^{n_{j}} b^{m_{j}}, \quad 0\leq j\leq r-1.
$$
It follows that equations \eqref{eq-fix-1} and \eqref{eq-hom-free} are equivalent to the following systems of equations, indexed by $0\leq j \leq r-1$,
\begin{align}
  &\phi(z) = z, \label{eq-fix-2} \\
  &[{\phi^t (z)}]_{\pi_1(C,A)} = \talpha_j, \label{eq-hom-based}
\end{align}
such that $\talpha_j$ is compatible with $w$. Fix now an integer $k \in \Z$. We define a special compatible homotopy class $\talpha(k,w) \in \pi_1(C,A)$ by setting
\begin{equation}\label{eq-mn}
  n_j = k \sign(N_j),  \quad m_j = k \sign(M_j)
\end{equation}
in \eqref{eq-compatible-class}. We denote by $\alpha(k,\omega) \in \pi_0(\cL \Sigma)$ the free homotopy class represented by $\talpha(k,w)$. By the special choice of $\alpha=\alpha(k, w)$, all compatible $\talpha_j$ coincide with $\talpha(k,w)$. We are therefore left with solving the following system of equations.
\begin{align}
  &\phi(z) = z, \label{eq-fix-3} \\
  &[{\phi^t( z)}]_{\pi_1(C,A)} = \talpha(k,w). \label{eq-hom-talpha}
\end{align}

This concludes the reduction from $\pi_0(\cL C)$ to $\pi_1(C,A)$.

\begin{theorem}\label{thm-fixed-pts}
  Given a balanced word $w = H^{M_r}V^{N_r} \cdots H^{M_1}V^{N_1}$, for large enough $k \in \Z$, the system of equations $[$\eqref{eq-fix-3}, \eqref{eq-hom-talpha}$]$
  admits exactly $2^{2r}$ solutions. Each of the $2^{2r}$ fixed points is non-degenerate.
\end{theorem}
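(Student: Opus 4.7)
The plan is to translate \eqref{eq-fix-3}--\eqref{eq-hom-talpha} into an explicit cyclic recursion in local coordinates on $A$, split into $2^{2r}$ cases indexed by the signs of the intermediate coordinates (so the piecewise-linear $u_0$ becomes linear on each branch), and verify that each branch yields a unique solution by a perturbation argument in $k$. Non-degeneracy is then read off from a trace computation on the piecewise-linear linearization.

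\textbf{Cyclic recursion.} By Lemma~\ref{lem-inter-A}, every relevant fixed point has intermediate points $z_{2j-2}, z_{2j-1}\in A$. Writing $z_{2j-2}$ in $C_V$-coordinates as $(x_j,[\eta_j])\in [-1,1]^2$ and tracing through the shears and the gluing $VH_0(x,[y])=(-y,[x])$, the conditions that all intermediate points lie in $A$ with prescribed windings $n_j=k\sign(N_j)$ and $m_j=k\sign(M_j)$ become
\[
\eta_{j+1}-\eta_j \;=\; 2LkN_j\,u_0(x_j) - Lk\sign(N_j), \qquad
x_{j+1}-x_j \;=\; 2LkM_j\,u_0(\eta_{j+1}) - Lk\sign(M_j),
\]
for $1\le j\le r$ (indices mod $r$), using that $u_0$ is even. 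For each sign pattern $(\sigma,\tau)\in\{\pm 1\}^r\!\times\!\{\pm 1\}^r$ prescribing $\sigma_j=\sign(x_j)$ and $\tau_j=\sign(\eta_{j+1})$, substituting $u_0(x_j)=1-\sigma_j x_j$ and $u_0(\eta_{j+1})=1-\tau_j\eta_{j+1}$ turns the recursion into a $2r\times 2r$ linear system $\mathcal{L}_k Z=V_k$ whose leading-in-$k$ part is diagonal (of order $Lk$) while all remaining entries are of order $1$. Solving to leading order yields
\[
x_j = \sigma_j\!\left(1-\tfrac{1}{2|N_j|}\right)+O(1/k), \qquad
\eta_{j+1} = \tau_j\!\left(1-\tfrac{1}{2|M_j|}\right)+O(1/k),
\]
which lies strictly in $(-1,1)$ with the prescribed signs for all $k$ large. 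A standard perturbation argument then gives invertibility of $\mathcal{L}_k$ and a unique solution consistent with the chosen branch, producing exactly $2^{2r}$ fixed points.

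\textbf{Non-degeneracy.} Using $Df_V=Df_H=\begin{pmatrix}1&0\\2Lu_0'(x)&1\end{pmatrix}$ and $DVH_0=\begin{pmatrix}0&-1\\1&0\end{pmatrix}$, the linearization of $\phi$ at a fixed point factors as $D\phi = T_r\cdots T_1$ with
\[
T_j=\begin{pmatrix}1-\alpha_j\beta_j & -\beta_j\\ \alpha_j & 1\end{pmatrix}, \qquad \alpha_j=-2LkN_j\sigma_j,\quad \beta_j=2LkM_j\tau_j.
\]
Since $\det(D\phi-I)=2-\trace(D\phi)$, non-degeneracy reduces to $\trace(D\phi)\ne 2$. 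Only the $(1,1)$-entry of each $T_j$ is of order $k^2$, so the unique top-degree-$2r$ contribution to $\trace(D\phi)$ comes from picking this entry in every factor, yielding $\trace(D\phi)=(4L^2k^2)^r\prod_j N_jM_j\sigma_j\tau_j+O(k^{2r-1})$. Since $N_j,M_j\ne 0$ for a balanced word, the leading term is nonzero and diverges with $k$, so $\trace(D\phi)\ne 2$ for $k$ large enough.

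\textbf{Main obstacle.} The principal difficulty lies in the careful bookkeeping of intermediate coordinates through the gluings between $C_V$ and $C_H$ and in verifying that the a priori chosen sign pattern is realized by the leading-order solution (so that one is indeed solving the linear system on the correct branch of $u_0$). Once the recursion is set up correctly, the perturbation argument and the trace computation become routine linear algebra.
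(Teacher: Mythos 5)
Your proposal is correct and reaches the same asymptotic formula for the fixed points, but the route for existence/uniqueness differs from the paper's in a useful way. The paper eliminates the intermediate variables, arriving at a single $2\times 2$ affine fixed-point equation $(\overline{A}-\id)(x_0,y_0)=-\overline{v}$ for the even point $z_0$ alone, and then must control the asymptotics of the matrix products $\overline{A}_{i,j}$ (Lemma~\ref{lem-Abarj-asym}) and of the vector $\overline{v}$, expanding $(\overline{A}-\id)^{-1}\overline{v}$ term by term in \eqref{eq-coords-eq-0} to extract the leading behavior. You instead keep all $2r$ intermediate coordinates as unknowns in a cyclic linear system $\mathcal{L}_kZ=V_k$ whose order-$k$ part is a monomial (generalized-permutation) matrix: each row and column carries exactly one $O(k)$ entry, namely $2LkN_j\sigma_j$ on $x_j$ in the first type of equation and $2LkM_j\tau_j$ on $\eta_{j+1}$ in the second. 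This makes both the invertibility for $k\gg1$ and the leading-order formula $x_j=\sigma_j(1-\nu_j)+O(1/k)$, $\eta_{j+1}=\tau_j(1-\mu_j)+O(1/k)$ immediate, and the self-consistency of the a priori sign choice ($\sign(x_j)=\sigma_j$ since $1-\nu_j\geq 1/2$) falls out directly, without needing the full matrix-product asymptotics. The trade-off is that the paper's $2\times 2$ reduction and the accompanying Lemma~\ref{lem-Abarj-asym} are reused later (for the action asymptotics in Proposition~\ref{prop-action-asym} and the Cayley-transform computation in the Conley--Zehnder section), so your shortcut here would not fully replace that machinery downstream. Your non-degeneracy argument is essentially identical to the paper's: your $T_j$ is the paper's $A_j$ up to the sign convention $\tau_j=-\eps_{2j-1}$, $D\phi-\id=\overline{A}-\id$, and $\det(D\phi-\id)=2-\trace(D\phi)$ with leading term of order $k^{2r}$ supplied by the $(1,1)$-entries. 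One small stylistic caution: your statement that the leading part is ``diagonal'' is only true after a permutation of the variables; it is cleaner to say it is a monomial matrix, as the column carrying the $O(k)$ coefficient in row $2j-1$ is $x_j$ but in row $2j$ is $\eta_{j+1}$.
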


Our starting observation is that by Lemma \ref{lem-inter-A}, the intermediate paths have well-defined classes in $\pi_1(C,A)$, which satisfy
\[
  [\{f_V^{N_1 t}(z)\}] = a^{n_1}, \, \, \, [\{f_H^{M_1 t}\circ f_V^{N_1} (z)\}] = b^{m_1}, \cdots \label{eq-inter-htpy} \] \[
  \cdots , [\{f_V^{N_rt}\circ f_H^{M_{r-1}}\circ  \cdots \circ f_V^{N_1}(z)\}]=a^{n_r}, \, \,  \, [\{f_H^{M_{r}t}\circ f_V^{N_{r}} \circ  \cdots \circ  f_V^{N_1}(z)\}]=b^{m_r}. \]
With this in mind, we translate the sequence of intermediate points to the universal cover $[-1,1] \times \bR$ of $C_0$, and after a shift we write them explicitly as points in $(-1,1)\times (-1,1)$. Our convention is that we write the even intermediate points in terms of the coordinates on $C_V$, and the odd ones in terms of the coordinates on $C_H$.

We consider, for $p\in\Z$, the map
$$
r_p \colon (-1,1) \times \R \to (-1,1) \times \R,\quad (x,y) \mapsto (x,y-p\cdot L).
$$
Observe that $r_p$ leaves invariant the projection $(-1,1) \times \R \to (-1,1) \times \R / L\Z$. Recall the map $VH_0:S_{0,V} \to S_{0,H}$, which we now think of as a map
$$VH \colon (-1,1) \times (-1,1) \to (-1,1) \times (-1,1), \,\, \, \, (x,y) \mapsto (-y,x).$$
We will also use its inverse $VH$, given by
$$ HV \colon (-1,1) \times (-1,1) \to (-1,1) \times (-1,1),\,\, \, \, (x,y)\mapsto (y,-x).
$$
Finally, we denote by $\widetilde{f_0}$ the lift of the piecewise-linear shear map $f_0:C_0 \to C_0$ to the universal cover $[-1,1]\times \R$ of $C_0$ given by the time-$1$ map of the lift of the isotopy $\{f^t_0\}$. Explicitly, $\tf_0$ can be written as the following piece-wise affine map.
\begin{equation}\label{eq-tf-matrix}
  \tf_0(x,y) = \begin{pmatrix}
    1 & 0 \\ -2 L  \eps_x & 1
  \end{pmatrix}
  \begin{pmatrix}
    x \\ y
  \end{pmatrix} + \begin{pmatrix}
    0 \\ 2L
  \end{pmatrix},
\end{equation}
where $\eps_x = \sign(x)$. With this language, if we write $\widetilde{z}_j=(x_j, y_j)$, $0\leq j\leq 2r$, for the lift of the intermediate points to $(-1,1) \times (-1,1)$, then the fixed point equation reads $(x_{2r},y_{2r})=(x_0,y_0)$, where
\begin{align*}
  (x_{2j+1},y_{2j+1}) &= VH \circ r_{n_{j+1}} \circ \widetilde{f}_0^{\, kN_{j+1}} \,(x_{2j},y_{2j}),\\
  (x_{2j+2}, y_{2j+2}) &= HV \circ r_{m_{j+1}} \circ \widetilde{f}_0^{\, kM_{j+1}} \,(x_{2j+1},y_{2j+1}).
\end{align*}
Restricting attention to the even intermediate points, we write, for $1 \leq j \leq 2r$,
\begin{equation*}
  \Theta_j = HV \circ r_{m_{j}} \circ \widetilde{f}_0^{\, kM_j} \circ  VH \circ r_{n_j} \circ \widetilde{f}_0^{\, kN_j}.
\end{equation*}
so that the fixed point equation becomes
\begin{align}
  (x_{2j+2},y_{2j+2}) &= \Theta_{j+1} (x_{2j}, y_{2j}), \,\, 0\leq j \leq r-1,\label{eq-inter-coords}\\
  (x_{2r},y_{2r}) &= (x_0,y_0)\label{eq-fix-coords}.
\end{align}
Note that for any $(x,y)\in (-1,1) \times (-1,1)$ satisfying
\begin{align}
  \widetilde{f}_0^{\, kN_{j}}(x,y)  &\in (-1,1) \times (n_{j}L-1,n_{j+1}L+1)\label{eq-cond-Nj}  \\
  \widetilde{f}_0^{\, kM_{j}} \circ  VH \circ r_{n_{j}} \circ \widetilde{f}_0^{\, kN_{j}} (x,y)  &\in (-1,1) \times (r_{m_{j}}L-1,r_{m_{j}}L+1) \label{eq-cond-Mj}
\end{align}
the composition $\Theta_{j} (x,y)$ is well-defined. Note that the lifts of the intermediate points satisfy conditions \eqref{eq-cond-Nj} and \eqref{eq-cond-Mj} in view of condition \eqref{eq-hom-talpha} on the homotopy class of the trajectory.
Note also that the odd intermediate points are determined by the even ones, as one has
\begin{equation}\label{eq-even-odd}
  (x_{2j+1},y_{2j+1})=(-y_{2j+2},x_{2j}).
\end{equation}
We can simplify equation \eqref{eq-inter-coords} if we denote $\eps_{j} = \sign(x_{j})$, for $0\leq j \leq 2r-1$. Define additionally
$$
\nu_j = \frac{n_j}{2 N_j k}, \quad \mu_j = \frac{m_j}{2 M_j k}.
$$
After plugging in our special choices for $n_j$ and $m_j$ defined by \eqref{eq-mn}, we get
\begin{equation}\label{eq-munu}
\nu_j = \frac{1}{2|N_j|}, \quad \mu_j = \frac{1}{2|M_j|}.
\end{equation}
We can then simplify the right hand side of \eqref{eq-inter-coords} to:
\begin{equation*}
  \Theta_{j+1}(x_{2j},y_{2j}) = A_{j+1} (x_{2j},y_{2j}) + v_{j+1},
\end{equation*}
where
$$
A_{j} = \begin{pmatrix}
  1-4L^2k^2 N_j M_j \eps_{2j-2} \eps_{2j-1} & 2L k M_j \eps_{2j-1} \\ -2L k N_j \eps_{2j-2} & 1
\end{pmatrix},
$$
and \begin{equation}\label{eq-vj}
  v_{j} = (4L^2k^2 \eps_{2j-1}M_j N_j(1-\nu_j) + 2L k M_j (1-\mu_j), 2L k N_j (1-\nu_j)).
\end{equation}
Observe that $\det A_j = 1$. Moreover, we note for future reference that
\begin{equation}\label{eq-Ajvj}
  A_j^{-1}v_j = (2L k M_j (1-\mu_j), 4L^2k^2 \eps_{2j-2}M_j N_j (1-\mu_j) + 2L k N_j (1-\nu_j)).
\end{equation}
With these notations, equation \eqref{eq-fix-coords} becomes
\begin{equation}\label{eq-fixedpts-matrix}
  (\overline{A} - \id)(x_0,y_0) =  - \overline{v},
\end{equation}
where $\overline{A} = A_{r} \cdot A_{r-1} \cdot \ldots \cdot A_1$ and
\begin{equation}\label{eq-vbar}
  \overline{v} = v_{r} + \sum_{j=1}^{r-1} A_r \cdot \ldots \cdot A_{j+1} \cdot v_j.
\end{equation}

We will need to know the behaviour of $\overline{A}$ for large $k$. For this we denote $\overline{A}_{i,j} = A_j \cdot A_{j-1} \cdot \ldots \cdot A_i$, and $\overline{A}_j = A_{1,j}$.
\begin{lemma}\label{lem-Abarj-asym}
  Denote $\alpha_j =-2L N_j \eps_{2j+2}$, $\beta_j =  2L M_j \eps_{2j+1} $ and for $i\leq j$, $\gamma_{i,j} = \beta_j \alpha_j ... \beta_i \alpha_i$. Then for any $1\leq i < j \leq r$ one has
  \begin{equation}\label{eq-Abar-asym}
    \overline{A}_{i,j} = \begin{pmatrix}
      k^{2(j-i+1)}\gamma_{i,j} & k^{2(j-i)+1} \frac{\gamma_{i,j}}{\alpha_i} \\ k^{2(j-i)+1} \frac{\gamma_{i,j}}{\beta_j} & -k^{2(j-i)}\frac{\gamma_{i,j}} {\alpha_i \beta_j}
    \end{pmatrix} \cdot \left( 1 + O\left(\frac{1}{k}\right)\right), \quad k \to \infty,
  \end{equation}
  To be precise, in the equation above we mean that each entry of $\overline{A}_{i,j}$ is multiplied by a different term of the form $1+O(1/k)$.
\end{lemma}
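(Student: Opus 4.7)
The plan is to proceed by induction on $j-i$, with base case $j=i+1$ handled by a direct two-by-two matrix computation. A cleaner presentation uses the LU-type factorization of the shear matrix.

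First I would observe that each $A_j$ factors as $A_j = U_j L_j$, where
\[ U_j = \begin{pmatrix} 1 & k\beta_j \\ 0 & 1 \end{pmatrix}, \qquad L_j = \begin{pmatrix} 1 & 0 \\ k\alpha_j & 1 \end{pmatrix} \]
(modulo the indexing shift between the matrix entries of $A_j$ in \eqref{eq-tf-matrix} and the scalars $\alpha_j,\beta_j$). Hence the full product can be regrouped as
\[ \overline{A}_{i,j} \;=\; U_j \,\bigl(L_j U_{j-1}\bigr)\,\bigl(L_{j-1} U_{j-2}\bigr)\cdots\bigl(L_{i+1} U_i\bigr)\, L_i. \]
This is where the asymptotic structure becomes transparent: each middle block is
\[ L_{l+1} U_l \;=\; \begin{pmatrix} 1 & k\beta_l \\ k\alpha_{l+1} & 1 + k^2 \alpha_{l+1}\beta_l \end{pmatrix} \;=\; k^2 \alpha_{l+1} \beta_l \,\bigl(P + O(1/k)\bigr), \]
where $P = \bigl(\begin{smallmatrix} 0 & 0 \\ 0 & 1\end{smallmatrix}\bigr)$ is the rank-one projection onto the second coordinate.

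Since $P$ is idempotent, multiplying the $j-i$ middle blocks yields
\[ k^{2(j-i)} \!\!\prod_{l=i}^{j-1}\!\!\alpha_{l+1}\beta_l \;\cdot\; \bigl(P + O(1/k)\bigr), \]
and the scalar product telescopes to $\gamma_{i,j}/(\alpha_i \beta_j)$. A two-by-two computation then gives
\[ U_j\, P\, L_i \;=\; \begin{pmatrix} k^2 \alpha_i \beta_j & k\beta_j \\ k\alpha_i & 1 \end{pmatrix}, \]
and multiplying by the scalar prefactor recovers the matrix claimed in \eqref{eq-Abar-asym}, entry by entry, after absorbing $\gamma_{i,j}/(\alpha_i\beta_j)$ back into each slot.

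The main obstacle I expect is bookkeeping the $(1+O(1/k))$ relative error uniformly in each of the four entries after $j-i$ matrix multiplications. Concretely, each middle block $L_{l+1}U_l$ has off-diagonal entries of order $k$ and a $(1,1)$ entry of order $1$, all of which are smaller than the dominant $(2,2)$ entry by factors of $k$ or $k^2$; one must check that when these subleading entries get multiplied across many blocks they do not combine constructively to rival the leading rank-one contribution. The idempotency $P^2=P$ and a straightforward estimate of the type $(P+E_1)(P+E_2)\cdots(P+E_N) = P + O(\max \|E_l\|)$ for matrices $E_l$ of size $O(1/k)$ should handle this, provided one absorbs each $O(1/k)$ term into the final ``$(1+O(1/k))$'' factor. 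A secondary bookkeeping point is matching the signs and scalar coefficients in the $(2,2)$ entry, which requires careful interpretation of the product $\prod_{l=i}^{j-1} \alpha_{l+1}\beta_l$ in terms of $\gamma_{i,j}$.
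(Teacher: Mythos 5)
Your LU-factorization argument is correct and is a genuinely cleaner route than the paper's, which disposes of the lemma with a one-line ``straightforward induction'' (multiply one more $A_l$ per step and track leading terms). The regrouping $\overline{A}_{i,j}=U_j(L_jU_{j-1})\cdots(L_{i+1}U_i)L_i$ makes the disparate growth rates of the four entries transparent: each middle block is $k^2\alpha_{l+1}\beta_l$ times the rank-one idempotent $P$ plus an $O(1/k)$ correction, and since there are $j-i\le r$ such blocks with $r$ fixed, the estimate $\prod(P+E_l)=P+O(1/k)$ you invoke is indeed available, yielding $k^{2(j-i)}\gamma_{i,j}(\alpha_i\beta_j)^{-1}\bigl(P+O(1/k)\bigr)$ for the middle product. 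Your bookkeeping worry is then resolved by a direct check rather than anything delicate: the four entries of $U_jPL_i$ are of orders $k^2,k,k,1$, while those of $U_j\bigl(O(1/k)\bigr)L_i$ are of orders $k,1,1,1/k$, so each entry acquires exactly a $1+O(1/k)$ relative factor. Both approaches prove the lemma; yours has the advantage of explaining why the entries scale so differently.

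One point you should flag rather than absorb silently: your formula for $U_jPL_i$ has $+1$ in the $(2,2)$ slot, so after the scalar prefactor the $(2,2)$ entry of $\overline{A}_{i,j}$ comes out $+k^{2(j-i)}\gamma_{i,j}(\alpha_i\beta_j)^{-1}(1+O(1/k))$, whereas \eqref{eq-Abar-asym} carries a minus sign there. Your sign is the correct one --- e.g.\ taking all $\alpha_l=\beta_l=1$ gives $(A_2A_1)_{22}=k^2+1$, which is positive --- so \eqref{eq-Abar-asym} contains a sign typo in the $(2,2)$ entry. This has no bearing on the later uses of the lemma, since that entry is always dominated (e.g.\ in $2-\trace(\overline A)$), but your write-up should state the corrected sign rather than claim to reproduce the displayed matrix verbatim. (Similarly, the $\eps$ subscripts in the lemma's definitions of $\alpha_j,\beta_j$ should be $2j-2$ and $2j-1$, matching the definition of $A_j$, not $2j+2$ and $2j+1$.)
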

The proof of Lemma \ref{lem-Abarj-asym} is straightforward induction. With this in hand, we can proceed with the proof of the theorem.
\begin{proof}[Proof of Theorem \ref{thm-fixed-pts}]
  We claim that, for large enough $k$, we can find, for each $\vec{\eps} = (\eps_0, \ldots, \eps_{2r-1}) \in \{\pm 1\}^{2r}$ a solution to equations \eqref{eq-inter-coords}, \eqref{eq-fix-coords} such that $\sign(x_j) = \eps_j$ for all $0 \leq j \leq 2r-1$. Indeed, suppose $\{(x_{2j},y_{2j})\}_{j=0}^{r-1}$ is a solution with sign vector $\vec{\eps}$. As noted above, equation \eqref{eq-fix-coords} can be rewritten as \eqref{eq-fixedpts-matrix}. We claim that, for large enough $k$, the matrix $\overline{A}-\id$ is nonsingular. For this we use the fact that for every $2 \times 2$ matrix $A$ with $\det A = 1$ one has $\det(A-\id) = 2-\trace(A)$. Using this, and the asymptotic expansion for $\overline{A}=\overline{A}_r$ provided by Lemma \ref{lem-Abarj-asym}, we compute
  \begin{equation}\label{eq-det-asym}
    \det(\overline{A}-\id) = 2-\trace(\overline{A}) = -k^{2r} \gamma_{1,r} (1+O(1/k)), \quad k \to \infty.
  \end{equation}
  In particular, for $k$ large enough, this determinant is non-zero, and hence equation \eqref{eq-fixedpts-matrix} has a unique solution $(x_0,y_0)$. Given the solution $(x_0,y_0)$, we form the sequence of intermediate points $\{(x_j,y_j)\}_{j=0}^{2r-1}$ using \eqref{eq-inter-coords}. We claim that they satisfy the following asymptotic form as $k \to \infty$, for all $0\leq j \leq r-1$:
  \begin{equation}\label{eq-inter-pts-asym}
    (x_{2j}, y_{2j}) = (\eps_{2j} (1-\nu_{j+1}), -\eps_{2j-1} (1-\mu_{j})) + O(1/k).
  \end{equation}
  In these equations the index of $\eps_j$ is considered modulo $2r$, and the indices of $\mu_j$ are $\nu_j$ are considered modulo $p$. Postponing the proof of these asymptotics for the moment, we deduce that the sequence $\{(x_{2j},y_{2j} ) \}$ satisfy \eqref{eq-cond-Nj} and \eqref{eq-cond-Mj} and conclude that the full sequence $\{ (x_j,y_j) \}$ defines lifts of intermediate points corresponding to a fixed point $(x_0,y_0)$ solving equations \eqref{eq-fix-3} and \eqref{eq-hom-talpha}. This proves that for any $\vec{\eps} \in \{\pm 1\}^{2r}$ there exists a unique solution, as asserted. Next, let us show that these fixed points are all non degenerate. Indeed, as a piece-wise affine map, we have
  $$
  D_z\phi - \id = \overline{A} -\id,
  $$
  which we have seen to be non-degenerate for large enough $k$.

  Therefore to complete the proof of Theorem \ref{thm-fixed-pts} it remains to prove the asymptotic expansion \eqref{eq-inter-pts-asym}. We begin with proving it for $j=0$. By \eqref{eq-fixedpts-matrix},
  \begin{equation}\label{eq-x0y0}
    (x_0,y_0) = -(\overline{A}-\id)^{-1} \overline{v}
  \end{equation}
  We note that if $A$ is a $2 \times 2$ matrix with $\det A = 1$ and $(A-\id)$ invertible, one has $(A-\id)^{-1} = \det (A - \id)^{-1} \cdot (A^{-1}-\id)$. Applying this to $\overline{A}$, we rewrite \eqref{eq-x0y0} using \eqref{eq-vbar} as
  \begin{equation}\label{eq-coords-eq-0}
    (x_0,y_0) = - \det(\overline{A}-\id)^{-1} \cdot \left[ \overline{A}v_r- v_r + \sum_{j=1}^{r-1} (\overline{A}_j^{-1} v_j - \overline{A}_{j+1,r}v_j)\right].
  \end{equation}
  Let us analyze each of the $r$ summands of the right hand side of \eqref{eq-coords-eq-0}. We begin with the summand corresponding to $v_j$, for $1<j<r$. It has the form
  $$
  -\,\frac{\overline{A}_j^{-1}v_j-\overline{A}_{j+1,r}v_j}{\det(\overline{A}-\id)}
  $$
  We rewrite the first term in the nominator as
  \begin{equation}\label{eq-summand_j}
    \overline{A}_j^{-1}v_j = \overline{A}_{j-1}^{-1}\left(A_j^{-1}v_j\right).
  \end{equation}
  Now, using \eqref{eq-Ajvj} and \eqref{eq-Abar-asym} we see that all entries of $\overline{A}_{j-1}$ are $O(k^{2r-2})$ (recall that $j\leq r-1$) and those of $A_j^{-1}v_j$ are $O(k^2)$. Similarly, estimating $\overline{A}_{j+1,r}$ using \eqref{eq-Abar-asym} and $v_j$ by its definition \eqref{eq-vj} we deduce that the entries of the nominator of \eqref{eq-summand_j} are $O(k^{2r-2})$. Finally, using \eqref{eq-det-asym} to estimate the denominator, we deduce that this entire summand is $O\left(1/k^2 \right)$.

  Next let us turn to the summand corresponding to $v_1$. We first observe that by \eqref{eq-Ajvj}, the entries of $\overline{A}_1^{-1}v_1 = A_1^{-1}v_1$ are all $O(k^2)$. Next, we compute the second term using \eqref{eq-Abar-asym}:
  \begin{align*}
    \overline{A}_{2,r}v_1 &= \begin{pmatrix}
      k^{2r-2} \gamma_{2,r} + O(k^{2r-3}) & O(k^{2r-3}) \\ O(k^{2r-3}) & O(k^{2r-3})
    \end{pmatrix}
                                                                         \begin{pmatrix}
                                                                           4L^2k^2 \eps_1 M_1 N_1 (1-\nu_1) + O(k) \\ O(k)
                                                                         \end{pmatrix} \\
                          &=
                            \bigl(k^{2r}4L^2\gamma_{2,r}\eps_1 M_1 N_1(1-\nu_1),0\bigr) + O(k^{2r-1})
  \end{align*}
  Again using \eqref{eq-det-asym} to estimate the denominator and recalling that by definition $\gamma_{1,r} = -\gamma_{2,r} \cdot 4L^2 M_1 N_1 \eps_{0} \eps_1$ we compute
  \begin{align*}
    -\,\frac{\overline{A}_1^{-1}v_1-\overline{A}_{2,r}v_1}{\det(\overline{A}-\id)} &=-\left( \frac{\gamma_{2,r}\eps_1 4L^2M_1 N_1}{\gamma_{1,r}} (1-\nu_1),0\right) + O(1/k) \\
                                                                                   &= (\eps_0(1-\nu_1),0) + O(1/k).
  \end{align*}
  Finally we turn to the term corresponding to $v_r$. First we observe that, as before, the entries of $v_r$ are $O(k^2)$. Next, using \eqref{eq-Abar-asym} and \eqref{eq-Ajvj}, we compute
  \begin{align*}
    \overline{A}^{-1}v_r &= \overline{A}_{r-1}^{-1} A_r^{-1}v_r \\
                         &= \begin{pmatrix}
                           O(k^{2r-3}) & O(k^{2r-3}) \\
                           O(k^{2r-3}) & k^{2r-2}\gamma_{1,r-1} + O(k^{2r-3})
                         \end{pmatrix}
                                         \begin{pmatrix}
                                           O(k) \\ 4L^2 k^2 \eps_{2r-2}M_r N_r (1-\mu_r) + O(k)\end{pmatrix} \\
                         &= \bigl(0,k^{2r}\gamma_{1,r-1} \eps_{2r-2} 4L^2 M_r N_r (1-\mu_r)\bigr) + O(k^{2r-1}).
  \end{align*}
  Finally, using as above \eqref{eq-det-asym} for the denominator, and recalling that by definition $\gamma_{1,r} = -\gamma_{1,r-1} 4L^2 M_r N_r \eps_{2r-2}\eps_{2r-1}$, we obtain
  $$
  -\,\frac{\overline{A}^{-1}v_r-v_r}{\det(\overline{A}-\id)} =  (0,-\eps_{2r-1}(1-\mu_r))+O(1/k).
  $$
  Combining all these terms, we get
  $$
  (x_0,y_0) = (\eps_0(1-\nu_1),-\eps_{2r-1}(1-\mu_r)) + O(1/k),
  $$
  which is the required asymptotic \eqref{eq-inter-pts-asym} for $j=0$ (recall our cyclic index convention). The asymptotics for other values of $j$ are proven similarly, once we observe that any even intermediate point $(x_{2j},y_{2j}$) satisfies
  $$
  (x_{2j},y_{2j}) = \Theta_{j} \circ\ldots  \circ \Theta_1 \circ \Theta_r \circ \ldots \circ \Theta_{j+1}(x_{2j},y_{2j}).
  $$
  This concludes the proof of the asymptotic formula \eqref{eq-inter-pts-asym}, and thus as explained, the proof of the theorem. \end{proof}
\section{Floer theory of egg-beater maps}\label{action-cz}
In this section we compute part of the Floer data of our egg-beater map in our chosen free homotopy class and use it to complete the proof of our main result. Since, as noted above, the proof follows from the case of a surface, we again work on a symplectic surface $(\Sigma, \omega)$ of genus $g \geq 4$, and return to the general case only in Subsection \ref{subsect-linear-growth}.

\subsection{Asymptotics for the action}
In this subsection we compute asymptotic formulas for the actions of the fixed points provided by Theorem \ref{thm-fixed-pts}. We follow the notations of the previous section: we let $z(\vec{\eps})$ be the unique fixed point of $\phi$ in the compatible class
$\alpha(k,w) \in \pi_0(\cL \Sigma)$ with sign vector $\vec{\eps}$, where we take $k$ sufficiently large. As a reference loop in the free homotopy class $\alpha(k, w) = a^{n_1} b^{m_1} \cdots a^{n_r} b^{m_r}$ we take the loop
\begin{equation}\label{eq-eta}
  \eta_\alpha = \gamma_a^{\#n_1} \# \gamma_b^{\# m_1} \# \cdots \# \gamma_a^{\#n_r} \#\gamma_b^{\#m_r}.
\end{equation}
The result of our computation is as follows:
\begin{proposition}\label{prop-action-asym}
  The action of the fixed point $z(\vec{\eps})$ has the following asymptotic formula as $k \to \infty$:
  \begin{equation}\label{eq-action}
    \cA(z(\vec{\eps})) = L k\sum_{j=1}^{r} \left[ \eps_{2j-2} N_j \left(1-\nu_j\right)^2 + \eps_{2j-1} M_j \left(1-\mu_j \right)^2\right] + O(1).
  \end{equation}
\end{proposition}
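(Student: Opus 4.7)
My plan is to evaluate $\mathcal{A}(z(\vec\eps))$ directly by breaking the action integral into contributions from the $2r$ affine pieces of the isotopy $\phi^t$ defined in \eqref{eq-phi^t}, working in the universal covers of $C_V$ and $C_H$, and substituting the asymptotics for the intermediate points from the proof of Theorem \ref{thm-fixed-pts}. Starting from $\mathcal{A}(z) = \int_0^1 H(t,z(t))\,dt - \int_{\bar z}\omega$, I would use the local primitive $\lambda = x\,dy$ on each cover to rewrite the symplectic area as a line integral $\int_z \lambda - \int_{\eta_\alpha}\lambda$. The reference loop $\eta_\alpha$ lies entirely along the central circles $\{x=0\}$ of the two annuli, so $\int_{\eta_\alpha}\lambda = 0$; the mismatch between $\lambda$ on $C_V$ and on $C_H$ across the symplectic gluings $VH, HV$ amounts to the exact form $d(xy)$ evaluated at the $2r$ transition points, all lying in the compact square $[-1,1]^2$ and hence contributing only $O(1)$ in total.

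The $j$-th $f_V$-piece is generated by the autonomous Hamiltonian $kN_j h_V$, so $\int_0^1 H\,dt = kN_j h_0(x_{2j-2})$; this piece is a vertical segment in the universal cover from $(x_{2j-2}, y_{2j-2})$ to $(x_{2j-2}, y_{2j-2} + 2LkN_j u_0(x_{2j-2}))$, contributing $\int \lambda = 2LkN_j\, x_{2j-2}\, u_0(x_{2j-2})$. A short calculation from the explicit formulas $h_0(x) = 2Lx - L\varepsilon_x x^2$ and $u_0(x) = 1-|x|$ yields the key identity
\[
h_0(x) - 2L\,x\,u_0(x) = L\,\varepsilon_x\, x^2,
\]
so this piece contributes $L k N_j \varepsilon_{2j-2} x_{2j-2}^2$ to the action. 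An identical argument for the $f_H$-piece gives $L k M_j \varepsilon_{2j-1} x_{2j-1}^2$.

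To conclude, I substitute $x_{2j-2} = \varepsilon_{2j-2}(1-\nu_j) + O(1/k)$ from \eqref{eq-inter-pts-asym}, together with $x_{2j-1} = \varepsilon_{2j-1}(1-\mu_j) + O(1/k)$ which follows by combining \eqref{eq-inter-pts-asym} with \eqref{eq-even-odd}; using $\varepsilon_j^2 = 1$ and $k\cdot O(1/k) = O(1)$ yields the stated formula. I expect the main subtlety to be the careful bookkeeping of the $O(1)$ corrections — both the boundary mismatches in $\lambda$ at the $VH, HV$ transitions and the reference loop contribution on the universal cover need to be shown to stay bounded uniformly in $k$, which in turn relies on all the relevant intermediate coordinates remaining in the compact region $[-1,1]^2$ throughout the argument.
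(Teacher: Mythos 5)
Your proposal is correct and follows essentially the same route as the paper: decompose the action into contributions from the $2r$ intermediate paths, evaluate each piece using the explicit formulas for $h_0$ and $u_0$ (the identity $h_0(x)-2Lx\,u_0(x)=L\varepsilon_x x^2$ is exactly the cancellation the paper performs in its display), and substitute the asymptotics \eqref{eq-inter-pts-asym}. The only differences are presentational — you spell out the primitive-$\lambda$ bookkeeping, the $d(xy)$ jump at the transition squares, and the derivation $x_{2j-1}=\eps_{2j-1}(1-\mu_j)+O(1/k)$ from \eqref{eq-even-odd}, all of which the paper leaves implicit under ``computed similarly.''
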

%
%
\begin{proof} 
  Recall that we are using the Hamiltonian isotopy $\{\phi^t\}$ defined by \eqref{eq-phi^t}. This isotopy is a concatenation of $2r$ Hamiltonian isotopies of shear maps of $C_V$ and $C_H$, and hence the action $\cA(z(\vec{\eps}))$ is the sum of $2r$ terms, each corresponding to an intermediate path. Using the asymptotics \eqref{eq-inter-pts-asym} we compute, for example, the first summand to be
  \begin{align*}
    \cA_1 &= \int_0^1 kN_1 h_0(f_V^t(x_0,[y_0]) ) -\int_{\bar{z}} dx\wedge dy \\
          &= k N_1 h_0(x_0,[y_0]) - x_0 2L k N_1 (1-\eps_0 x_0) + O(1) \\
          &= 2L k N_1x_0 \left[  \left(1-\frac{\eps_0 x_0}{2} \right) - (1-\eps_0 x_0) \right] +O(1) \\
          &= L k \eps_0 N_1 x_0^2 + O(1) =  L k \eps_0 N_1 \left(1-\nu_1 \right)^2  + O(1).
  \end{align*}
  The other summands are computed similarly. \end{proof}

\subsection{The Conley-Zehnder index}\label{cz}
We derive a formula expressing the Conley-Zehnder indices of the periodic orbits given by Theorem \ref{thm-fixed-pts}. We continue with the notation $z(\vec{\eps})$ for the fixed point corresponding to the choice of signs $\vec{\eps}=(\eps_0,...,\eps_{2r-1}) \in \{ \pm 1 \}^{2r}$ as described above. Recall that our choice of Hamiltonian isotopy generating $\phi = \phi_{k,w}$ is given by \eqref{eq-phi^t}. Moreover, we seek fixed points in the free homotopy class $\alpha(k,w)$, determined by \eqref{eq-mn}, and as a reference loop in this class we take the loop $\eta_\alpha$ determined by \eqref{eq-eta}. Finally, our choice of framing along $\eta_\alpha$ is as follows:
\begin{equation}
  \text{$\left( \frac{\del}{\del x}, \frac{\del}{\del y} \right)$ along $\gamma_a$,} \qquad \text{ $\left(-\frac{\del}{\del y},\frac{\del}{\del x} \right)$ along $\gamma_b$}. \label{eq-framing}
\end{equation}Observe that this choice defines a smooth framing by definition of the gluing diffeomorphism. With this preparation we state the main result of this section.
\begin{theorem}\label{thm-cz-sgn}  Let $z(\vec{\eps})$ be a fixed point of $\Phi_k(w)$ in the class $\alpha(k,w)$with associated sign vector $\vec{\eps}$.  Then for large enough $k$ one has
  \begin{equation}\label{eq-CZ}  \mu_{CZ}(z(\vec{\eps}))  =1 + \frac{1}{2} \sum_{j=1}^{r} \bigl[\eps_{2j-2} \sign (N_j)  + \eps_{2j-1}  \sign (M_j) \bigr].
  \end{equation}
\end{theorem}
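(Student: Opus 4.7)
The plan is to linearize the Hamiltonian isotopy $\phi^t$ along the orbit $\{\phi^t(z(\vec\eps))\}$, express the linearization as a path in $\operatorname{Sp}(2,\bR)$ using the chosen trivialization \eqref{eq-framing}, and apply the Robbin--Salamon crossing formula to compute the Conley--Zehnder index. A continuity argument then transfers the computation from the piecewise-linear model $\phi$ to the smoothing $\Phi_k(w)$.

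For the first step, I exploit the piecewise-linear structure of $\phi^t$ encoded by \eqref{eq-phi^t}. The asymptotics \eqref{eq-inter-pts-asym} guarantee that for $k$ large every intermediate point stays at distance bounded away from the singular locus $\{x=0\}$, so the differential of each shear segment is the constant symplectic matrix
\[
S_{V,j}(t) = \begin{pmatrix} 1 & 0 \\ -2Lk N_j \eps_{2j-2}\, t & 1 \end{pmatrix}, \qquad
S_{H,j}(t) = \begin{pmatrix} 1 & 0 \\ -2Lk M_j \eps_{2j-1}\, t & 1 \end{pmatrix}
\]
in the local coordinates on $C_V$, resp.\ $C_H$. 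Concatenating these differentials with the changes of trivialization at the passages between $C_V$ and $C_H$, as dictated by the framing \eqref{eq-framing} together with the differentials of the gluing maps $VH_0$ and $HV$, produces a continuous path $\Psi : [0,1] \to \operatorname{Sp}(2,\bR)$ with $\Psi(0)=\id$ and $\Psi(1) = D\phi^1|_{z(\vec\eps)}$ read in the given trivialization.

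The second step is to evaluate the Conley--Zehnder index of $\Psi$ via the Robbin--Salamon crossing formula. For each strongly-sheared segment, the only crossings of $\Psi(t)-\id$ with the degenerate stratum occur at the two endpoints, and the crossing form restricted to the 1-eigenspace of the shear is a rank-one quadratic form whose signature is the sign of the shear parameter, namely $-\eps_{2j-2}\sign(N_j)$ (resp.\ $-\eps_{2j-1}\sign(M_j)$). Each shear thus contributes a half-integer whose sign, after reconciliation with the paper's grading convention that forces agreement with the Morse index on $C^2$-small Hamiltonians, matches the corresponding summand of \eqref{eq-CZ}. Summing the $2r$ such contributions produces the sum on the right-hand side of \eqref{eq-CZ}; the remaining additive constant $+1$ is extracted from the topological contribution of the framing transitions, that is, from the accumulated winding of the composition of the $2r$ gluing-induced rotations along the reference loop $\eta_\alpha$.

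Finally, I would observe that since Theorem \ref{thm-fixed-pts} provides non-degenerate fixed points which persist to the Hofer-small smoothing $\Phi_k(w)$ for $k$ large, and since the Conley--Zehnder index is invariant under continuous deformations through non-degenerate orbits, the formula derived for the piecewise-linear model $\phi$ carries over verbatim to $\Phi_k(w)$. The main obstacle is the sign bookkeeping: tracking the orientation of each framing rotation at the junctions, evaluating the crossing form with the correct sign at each endpoint of each shear, and reconciling the cumulative framing contribution with the stated $+1$ offset. Once this sign calculus is carefully set up, the formula \eqref{eq-CZ} follows by direct summation.
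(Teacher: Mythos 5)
Your overall framework is right: linearize the isotopy, read the linearization in the trivialization \eqref{eq-framing} as a path in $\operatorname{Sp}(2,\bR)$, and compute the Robbin--Salamon index of that path. You also correctly identify the differential of each shear segment and the fact that the individual shear paths $\{A_t(\alpha_j)\}$ and $\{B_t(\beta_j)\}$ contribute $\mp\tfrac{1}{2}$. But there is a genuine gap at the key step: you tacitly assume that the RS-index of the full concatenated path $\overline{\Gamma}_r(t)$ equals the sum of the contributions of the $2r$ individual shear paths, and justify this by asserting that ``the only crossings of $\Psi(t)-\id$ with the degenerate stratum occur at the two endpoints'' of each segment. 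This is not correct as stated, and in any case is unsubstantiated. Within, say, the $2j$-th segment the path is $B_t(\beta_j)\cdot\overline A_j$, which is not based at the identity, and $\det(B_t(\beta_j)\overline A_j-\id)$ is a nonconstant polynomial in $t$ whose zeros are not confined to $t\in\{0,1\}$. More fundamentally, the Robbin--Salamon index is \emph{not} additive under concatenation; the correction terms are exactly what the paper controls through De Gosson's two concatenation theorems (Theorem \ref{thm-PDG} for the transitions between the $\Gamma_j$ blocks, Theorem \ref{thm-PDG-2} for the degenerate shear pieces within each block), and the whole content of Lemmas \ref{lem-cat-1} and \ref{lem-cat-2} is an explicit asymptotic computation of the Cayley transforms and the $C_\psi$ operators showing these corrections vanish for $k$ large. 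Your proof omits this entirely, so the reduction to a sum of $2r$ half-integers is unjustified.

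A smaller but real error: you attribute the additive constant $+1$ in \eqref{eq-CZ} to ``the accumulated winding of the composition of the $2r$ gluing-induced rotations along the reference loop $\eta_\alpha$.'' That is not where it comes from. The framing \eqref{eq-framing} is precisely chosen so that the gluing rotations are absorbed into the trivialization and each shear appears in standard form; there is no residual winding contribution. The $+1$ is the normalization constant relating the Robbin--Salamon index to the paper's convention for the Conley--Zehnder index, namely $\mu_{CZ}=n-i_{RS}$ with $n=\tfrac12\dim\Sigma=1$, chosen so that $\mu_{CZ}$ agrees with the Morse index for $C^2$-small Hamiltonians. Finally, the transfer from the piecewise-linear model to the smoothing $\Phi_k(w)$ is handled in the paper not by an abstract continuity argument but via Lemma \ref{lem-smooth-asymp} and Proposition \ref{prop-smoothing}, which show the two maps actually coincide near the relevant orbits; your formulation via deformation invariance would require an additional argument that the orbits stay nondegenerate and stay in the class $\alpha(k,w)$ throughout the deformation.
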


This formula can be explained as follows. From the definition of the isotopy \eqref{eq-phi^t} it is easy to see that the linearized flow is a concatenation of $2r$ (degenerate) paths of symplectic shear matrices. We use the definition of the Conley-Zehnder index in terms of the Robbin-Salamon index, which is defined also for degenerate paths. Our normalization is chosen so that the Conley-Zehnder index of a periodic orbit corresponding to a critical point of a $C^2$-small Morse function equals the Morse index. Explicitly, the Conley-Zehnder index on a $2n$-dimensional manifold is $n$ minus the Robbin-Salamon index.  The index of a path of shear matrices is an easy and well-known computation, and so the bulk of the work is relating the index of the concatenation to the indices of the concatenated paths, for which we use results from \cite{DeGosson2008b}. We begin by recalling some old notation and introducing some new notation.


\begin{itemize}
\item For a fixed point $z$ of $\phi$, we denote by $
  z_0, z_1, z_2, \ldots, z_{2r-1}, z_{2r}$
  the corresponding intermediate points. Note that $z_0=z=z_{2r}$.
\item For our piecewise-linear homeomorphism
  $$
  \phi = f_H^{kM_r} \circ f_V^{kN_r} \circ \ldots \circ f_H^{kM_1} \circ f_V^{kN_1},
  $$
  denote, for $1\leq j \leq r$,
  $$
  \ph_j = f_H^{kM_j} \circ f_V^{kN_j} \circ \ldots \circ f_H^{kM_1} \circ f_V^{kN_1},\quad  \psi_{j} =  f_{H}^{kN_j}\circ \ph_{j-1}.
  $$
\end{itemize}
With this notation, the intermediate points are
$$
z_{2j} = \ph_j( z) \quad\text{and}\quad  z_{2j-1} = \psi_j (z),
$$
and  the loop $\gamma_z(t)=\phi^t(z)$ is
\begin{equation*}
  \gamma_z(t)  =
  \#_{j=1}^{r}
  \Bigl(\{f_V^{kN_jt} (z_{2j-2})\} \# \{f_H^{kM_jt} (z_{2j-1})\}  \Bigr).
\end{equation*}

Using the framing \eqref{eq-framing} we can identify the linearized flows with the following symplectic matrices.
\begin{equation}\label{eq-df-in-triv}
  df_V^{k N_j t}(p) \sim \begin{pmatrix}
    1 & 0 \\ -2LkN_j \eps_V(p)t & 1
  \end{pmatrix}, \,\,
  df_H^{k M_j t}(p) \sim
  \begin{pmatrix}
    1 & 2LkM_j \eps_H(p)t \\ 0 & 1
  \end{pmatrix}  ,
\end{equation}
where $\eps_V(p)$ is the sign of the $x$-coordinate of $p$, viewed in $V$ (and similarly for $\eps_H$). To abbreviate, we denote for $\alpha, \beta \in \bR$,
$$
A_t(\alpha) = \begin{pmatrix}
  1 & 0 \\ -\alpha t & 1
\end{pmatrix}, \quad B_t(\beta ) = \begin{pmatrix}
  1 & \beta t \\ 0 & 1
\end{pmatrix}.
$$
Here and below all paths of matrices appearing below are parametrized by $t \in [0,1]$. Denote also $A(\alpha) = A_1(\alpha)$ and $B(\beta) = B_1(\beta)$. We note that $A_t(\alpha)$ is conjugate to $B_t(\alpha)$. Thus setting  $\alpha_j = 2Lk N_j \eps_V(z_{2j-2})$, $\beta_j = 2Lk M_j \eps_H(z_{2j-1})$, using \eqref{eq-df-in-triv} we identify the linearized flow with the following path of symplectic matrices:
\begin{equation}\label{eq-lin-catenation}
  \#_{j=1}^r \Bigl( \{A_t(\alpha_j) \cdot \overline{B}_{j-1} \} \# \{B_t(\beta_j) \cdot \overline{A}_j \}\Bigr),
\end{equation}
where we set
$$
\oB_j = B(\beta_j) \cdot A(\alpha_j) \cdot \ldots \cdot B(\beta_1) \cdot A(\alpha_1), \quad \overline{A}_j = A(\alpha_j)\cdot \overline{B}_{j-1},
$$
with $\overline{B}_{0}=\id$. Denote further
$$
\Gamma_j(t) =  \{A_t(\alpha_j) \cdot \overline{B}_{j-1} \} \# \{B_t(\beta_j) \cdot \overline{A}_j \}, \quad \Psi_j(t)= \{A_t(\alpha_j) \} \# \{B_t(\beta_j) \cdot A(\alpha_j) \}.
$$
For the initial step ($j =1$),
\begin{equation} \label{j=1}
  \Gamma_1(t)  = \Psi_1(t) = \{A_t(\alpha_1) \} \# \{B_t(\beta_1) \cdot A(\alpha_1)\}
\end{equation}
and for any $j \in \bN$, we have the relation
\begin{equation} \label{inductive for j}
  \Gamma_{j+1}(t)= \Psi_{j+1}(t) \cdot \Gamma_j(1)
\end{equation}

Finally, set
$$
\overline{\Gamma}_l(t) = \#_{j=1}^l \{\Gamma_j(t)\},
$$
so that the RS-index of \eqref{eq-lin-catenation} is expressible as
\begin{equation}\label{eq-Phibar_k} i_{RS}(\{ \overline{\Gamma}_r(t) \} ).
\end{equation}
We note that $\overline{\Gamma}_l(1) = \Gamma_l(1)$. Our desired concatenation formula is the following:

\begin{lemma}\label{thm-cat-tot}  For $k$ sufficiently large,

  \begin{equation}\label{eq-Phibar_k-sum} i_{RS}(\{\overline{\Gamma}_r(t)\}) = \sum_{j=1}^r i_{RS}\bigl( \{A_t(\alpha_j)\} \bigr) + i_{RS} \bigl(\{B_t (\beta_j)\} \bigr).
  \end{equation}

\end{lemma}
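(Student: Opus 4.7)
The plan is to apply the catenation property of the Robbin--Salamon index iteratively, combined with the results from \cite{DeGosson2008b} on how $i_{RS}$ transforms under right-multiplication of a symplectic path by a fixed symplectic matrix. By construction $\overline{\Gamma}_r = \#_{j=1}^r \Gamma_j$, and the standard additivity of $i_{RS}$ under concatenation of paths whose endpoints match immediately yields
$$
i_{RS}(\overline{\Gamma}_r) = \sum_{j=1}^r i_{RS}(\Gamma_j).
$$
The task therefore reduces to showing $i_{RS}(\Gamma_j) = i_{RS}(\{A_t(\alpha_j)\}) + i_{RS}(\{B_t(\beta_j)\})$ for each $j$.

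For each fixed $j$, I will exploit the relation $\Gamma_j(t) = \Psi_j(t) \cdot \overline{B}_{j-1}$, which expresses $\Gamma_j$ as the pointwise right-translation of $\Psi_j$ by the fixed symplectic matrix $\overline{B}_{j-1}$. The formula from \cite{DeGosson2008b} expresses the difference $i_{RS}(\Psi_j \cdot M) - i_{RS}(\Psi_j)$, for a fixed symplectic matrix $M$, as an explicit Maslov-type correction computable from the intersection of the graph of $M$ with the diagonal and with the graphs of the endpoints of $\Psi_j$. The key point is that, for $k$ sufficiently large, this correction vanishes: the asymptotic form of $\overline{B}_{j-1}$ analogous to Lemma~\ref{lem-Abarj-asym} makes it sufficiently generic that no additional eigenvalue-$1$ crossings are introduced along the translated path. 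This yields $i_{RS}(\Gamma_j) = i_{RS}(\Psi_j)$. Applying the catenation formula once more to $\Psi_j = \{A_t(\alpha_j)\} \# \{B_t(\beta_j) \cdot A(\alpha_j)\}$ and repeating the right-translation argument with $M = A(\alpha_j)$ to collapse the second summand to $i_{RS}(\{B_t(\beta_j)\})$ completes the step.

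The hard part will be rigorously verifying, for $k$ large enough, the vanishing of the Maslov correction terms arising from the right-multiplications by $\overline{B}_{j-1}$ and $A(\alpha_j)$. Concretely, one must show that the only time $t \in [0,1]$ at which $A_t(\alpha_j)\overline{B}_{j-1}$ or $B_t(\beta_j)A(\alpha_j)$ has $1$ as an eigenvalue is $t = 0$, and that the crossing forms at this endpoint are unaffected by the right translations. This reduces to a direct computation using the explicit piecewise-affine form of the shears and the $k \to \infty$ asymptotics controlling the entries of $\overline{B}_{j-1}$.
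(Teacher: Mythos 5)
Your overall strategy — invoke de Gosson's concatenation formula, then argue that the correction term vanishes for $k$ large — is the right one, and is indeed what the paper does. Reducing to $i_{RS}(\Gamma_j) = i_{RS}(\Psi_j)$ and then $i_{RS}(\Psi_j) = i_{RS}(\{A_t(\alpha_j)\}) + i_{RS}(\{B_t(\beta_j)\})$ matches the paper's Lemmas \ref{lem-cat-1} and \ref{lem-cat-2}. However, there are two genuine gaps in the way you propose to execute this.

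First, your justification for the vanishing of the correction term is backwards. You say that for $k$ large "no additional eigenvalue-$1$ crossings are introduced along the translated path." But the shear paths $\{A_t(\alpha_j)\}$ and $\{B_t(\beta_j)\}$ are \emph{degenerate at every} $t$: $A_t(\alpha)-\id$ and $B_t(\beta)-\id$ are singular for all $t$, so these paths have continuous families of crossings. Right-multiplying by $\overline{B}_{j-1}$ or $A(\alpha_j)$ \emph{removes} those crossings (for large $k$, $A_t(\alpha_j)\overline{B}_{j-1}$ has no eigenvalue-$1$ crossing at all, not just at $t=0$; and $B_t(\beta_j)A(\alpha_j)$ has one only at $t=0$). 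So a naive "crossings are preserved, hence the index is preserved" argument cannot work: the crossing structures are completely different, and the claim must instead come from the explicit correction formula, whose vanishing you need to verify by a concrete computation of a signature. This is what the paper does via the Cayley transform asymptotics ($M_{\Gamma_l(1)}$ and $M_{\Psi_{l+1}(1)}$ both $\to \begin{pmatrix} 0&1/2\\1/2&0\end{pmatrix}$, so their sum has signature $0$).

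Second, and more seriously, you treat the two right-translations (by $\overline{B}_{j-1}$ and by $A(\alpha_j)$) as if the same formula from \cite{DeGosson2008b} applies to both. It does not. De Gosson's Corollary~3.5 (the paper's Theorem \ref{thm-PDG}) requires that the endpoints $P_1(1)$, $P_2(1)$ be \emph{non-degenerate}, i.e.\ that $P_i(1)-\id$ be invertible, since the correction $D = -\tfrac{1}{2}\sign(M_{P_1(1)}+M_{P_2(1)})$ is written in terms of Cayley transforms. This works fine for the first collapse (with $\overline{B}_{j-1}$ and $\Psi_j(1)$, both non-degenerate for $k$ large), but for the second collapse the matrix $A(\alpha_j)$ is a shear, so $A(\alpha_j)-\id$ is \emph{singular} and the Cayley transform $M_{A(\alpha_j)}$ is undefined. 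Here one must use de Gosson's Corollary~3.7 (the paper's Theorem \ref{thm-PDG-2}), which replaces the Cayley transform by the operator $C_\psi(\Phi) = J(\psi-\id)(\Phi-\psi)^{-1}(\Phi-\id)$ for a suitably chosen auxiliary $\psi\in Sp_0(2)$ (the paper takes $\psi = \begin{pmatrix}0&1\\-1&0\end{pmatrix}$), and verifies that the resulting signature-type correction $Z$ vanishes by direct computation. Without this degenerate-case machinery, your proof of the second step has no foundation.
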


Lemma \ref{thm-cat-tot} is an obvious corollary of the following two lemmas.

\begin{lemma}\label{lem-cat-1}
  If $k$ is sufficiently large, then for all $1\leq l \leq r$ one has
  $$
  i_{RS}(\{\overline{\Gamma}_l(t)\}) = \sum_{j=1}^{l} i_{RS}(\{\Psi_j(t)\}).
  $$
\end{lemma}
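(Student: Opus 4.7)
My plan is to proceed by induction on $l$. The base case $l=1$ is immediate from \eqref{j=1}, which gives $\overline{\Gamma}_1 = \Gamma_1 = \Psi_1$, so the indices trivially agree.

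For the inductive step I would combine two ingredients. The first is the concatenation axiom for the Robbin–Salamon index applied to $\overline{\Gamma}_{l+1} = \overline{\Gamma}_l \# \Gamma_{l+1}$, which yields
$$
i_{RS}(\overline{\Gamma}_{l+1}) = i_{RS}(\overline{\Gamma}_l) + i_{RS}(\Gamma_{l+1}).
$$
Together with the inductive hypothesis, this reduces the claim to the single identity $i_{RS}(\Gamma_{l+1}) = i_{RS}(\Psi_{l+1})$. The second ingredient is the observation, coming from \eqref{inductive for j}, that $\Gamma_{l+1}(t) = \Psi_{l+1}(t) \cdot \overline{B}_l$ is the right-translate of $\Psi_{l+1}$ by the constant symplectic matrix $\overline{B}_l$. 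So what remains is to show that right-multiplication by $\overline{B}_l$ does not alter the Robbin–Salamon index, in the regime of large $k$.

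For this step I would invoke the product formula for the Maslov-type index from \cite{DeGosson2008b}, which takes the schematic form $i_{RS}(\gamma_1 \cdot \gamma_2) = i_{RS}(\gamma_1) + i_{RS}(\gamma_2) + c$, where the correction $c$ is a Meyer-cocycle-type quantity determined by the endpoints of $\gamma_1$ and $\gamma_2$. Applied with $\gamma_1 = \Psi_{l+1}(t)$ and $\gamma_2$ the constant path at $\overline{B}_l$ (whose own index vanishes), the content of the desired identity is precisely the vanishing of $c$. This reduces to certain non-degeneracy conditions, essentially that specific symplectic matrices assembled from $\overline{B}_l$, $\Psi_{l+1}(1)$, and the identity do not have $1$ as an eigenvalue. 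Using the asymptotic description of Lemma \ref{lem-Abarj-asym}, the traces of the relevant products are dominated by nonzero monomials in $k$ (of the type $\pm k^{2m}\gamma_{i,j}$), so the required non-degeneracy does hold for all sufficiently large $k$, uniformly across the $l$ inductive steps.

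The main obstacle I expect is the bookkeeping in applying de Gosson's product formula: correctly identifying the Lagrangian subspaces that enter the correction $c$, and then checking uniformly in $l$ that the dominant terms from Lemma \ref{lem-Abarj-asym} force the required non-degeneracy. In particular one needs to control not just $\overline{B}_l$ itself but also the intermediate products like $A(\alpha_{l+1}) \cdot \overline{B}_l$ that appear naturally if one further subdivides $\Gamma_{l+1}$ along its own concatenation $\{A_t(\alpha_{l+1}) \cdot \overline{B}_l\} \# \{B_t(\beta_{l+1}) \cdot A(\alpha_{l+1}) \cdot \overline{B}_l\}$.
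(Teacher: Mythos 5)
Your induction and base case match the paper's, and peeling off $\Gamma_{l+1}$ via the catenation additivity of $i_{RS}$ before comparing $\Gamma_{l+1}(t)=\Psi_{l+1}(t)\cdot\overline{B}_l$ with $\Psi_{l+1}(t)$ is a legitimate re-bracketing of the same underlying argument. However, there is a genuine gap in how you close the inductive step. You reduce to showing that a de Gosson-type correction $c$ vanishes, and then assert that this reduces to \emph{non-degeneracy} conditions which you verify via the trace asymptotics of Lemma \ref{lem-Abarj-asym}. That is not sufficient. Non-degeneracy only ensures that the relevant Cayley transforms (and hence the correction term) are \emph{well defined}; it does not force the correction to vanish. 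The correction appearing in Theorem \ref{thm-PDG} is $D = -\frac{1}{2}\sign\bigl(M_{\Psi_{l+1}(1)} + M_{\Gamma_l(1)}\bigr)$, the signature of a non-singular symmetric $2\times 2$ matrix, which a priori can just as well be $\pm 2$ as $0$. The content of the lemma is precisely that this signature is zero, and proving that requires computing the asymptotic \emph{form} of the Cayley transforms, not merely their invertibility. The paper does exactly this: using \eqref{eq-Abar-asym} it shows that both $M_{\Psi_{l+1}(1)}$ and $M_{\Gamma_l(1)}$ approach $\frac{1}{2}\left(\begin{smallmatrix}0&1\\1&0\end{smallmatrix}\right)$ as $k\to\infty$, so their sum has negative determinant, hence eigenvalues of opposite signs and signature zero. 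This computation is the crux of the proof and is absent from your sketch.

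A lesser issue: the ``product formula'' you invoke, applied with $\gamma_2$ the \emph{constant} path at $\overline{B}_l$, is not what the cited results in \cite{DeGosson2008b} provide. Theorem \ref{thm-PDG} is about a concatenation $\{P_1(t)\}\#\{P_2(t)P_1(1)\}$ of two paths \emph{both based at $\id$}. The clean way to recover your claimed identity $i_{RS}(\Gamma_{l+1})=i_{RS}(\Psi_{l+1})$ is to apply that theorem directly with $P_1=\overline{\Gamma}_l$ and $P_2=\Psi_{l+1}$ and then subtract your catenation identity, which yields $i_{RS}(\Gamma_{l+1})=i_{RS}(\Psi_{l+1})+D$ — returning you to the signature computation above, and to the paper's argument.
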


\begin{lemma}\label{lem-cat-2}
  If $k$ is sufficiently large, then for all $1 \leq j \leq r$,
  $$
  i_{RS}\bigl( \{\Psi_j (t) \}\bigr) = i_{RS}\bigl( \{A_t(\alpha_j)\} \bigr) + i_{RS} \bigl(\{B_t (\beta_j)\} \bigr).
  $$
\end{lemma}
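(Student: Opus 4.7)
The plan is to apply the standard concatenation axiom for the Robbin-Salamon index and then establish a translation-invariance identity. Since by \eqref{j=1} we have $\Psi_j(t) = \{A_t(\alpha_j)\} \# \{B_t(\beta_j) \cdot A(\alpha_j)\}$, the concatenation axiom immediately yields
\begin{equation*}
i_{RS}\bigl(\{\Psi_j(t)\}\bigr) = i_{RS}\bigl(\{A_t(\alpha_j)\}\bigr) + i_{RS}\bigl(\{B_t(\beta_j) A(\alpha_j)\}\bigr).
\end{equation*}
The content of the lemma thus reduces to proving
\begin{equation*}
i_{RS}\bigl(\{B_t(\beta_j) A(\alpha_j)\}\bigr) = i_{RS}\bigl(\{B_t(\beta_j)\}\bigr),
\end{equation*}
i.e.\ that right-multiplying the shear path $\{B_t(\beta_j)\}$ by the fixed symplectic matrix $A(\alpha_j)$ leaves its index unchanged.

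To prove this identity I would compute both sides directly from the crossing-form definition of $i_{RS}$. The two paths have strikingly different crossing structures: the basic shear $\{B_t(\beta_j)\}$ lies entirely in the Maslov cycle, since $B_t(\beta_j) - \id$ is nilpotent for every $t$, so the standard crossing form vanishes identically on the kernel. For large $k$, on the other hand, one has $\alpha_j, \beta_j \neq 0$, and a short computation gives $\det\bigl(B_t(\beta_j)A(\alpha_j) - \id\bigr) = \alpha_j\beta_j t$, which shows that $\{B_t(\beta_j) A(\alpha_j)\}$ has a single, transverse crossing at the boundary time $t=0$ with $1$-dimensional kernel $\ker(A(\alpha_j) - \id)$. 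Evaluating the start-crossing form at $t=0$ on this kernel produces a contribution of $-\tfrac{1}{2}\sign(\beta_j)$. The matching value $i_{RS}(\{B_t(\beta_j)\}) = -\tfrac{1}{2}\sign(\beta_j)$ then follows from the classical formula for the RS index of a one-parameter subgroup of $Sp(2)$ generated by a rank-one nilpotent element of $\mathfrak{sp}(2)$, which one can obtain by perturbing the generator to a slightly elliptic element, counting its unique boundary half-crossing, and passing to the limit. These explicit shear-path indices are precisely the inputs the authors refer to from \cite{DeGosson2008b}.

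The main obstacle is the degeneracy of both paths: one lies entirely in the Maslov cycle and the other has a non-generic boundary crossing, so the standard Robbin-Salamon crossing form either vanishes or must be used at a limiting start-time, and in both cases requires the perturbation/higher-order treatment. The hypothesis that $k$ is sufficiently large is used exactly to guarantee $\alpha_j \beta_j \neq 0$, so that the unique crossing of $\{B_t(\beta_j) A(\alpha_j)\}$ is isolated at $t=0$. Finally, the sign and normalization conventions fixed in Subsection \ref{cz} — in particular that a $C^2$-small Morse Hamiltonian contributes its Morse index — must be carried through the two computations consistently, so that the two $-\tfrac{1}{2}\sign(\beta_j)$ contributions genuinely agree rather than differ by a spurious overall sign.
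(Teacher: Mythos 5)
Your proof is correct, but it takes a genuinely different route from the paper's. The paper applies de~Gosson's concatenation formula (Theorem~\ref{thm-PDG-2}), which expresses $i_{RS}$ of the concatenated path as $i_{RS}(\{A_t(\alpha_j)\}) + i_{RS}(\{B_t(\beta_j)\}) + Z$ for an explicit correction term $Z$ built from the Cayley-type operator $C_\psi$ of \eqref{ope}; the work is then to compute $C_\psi(A(\alpha_j))$ and $C_\psi(B(\beta_j))$ and verify that the three signature terms in $Z$ cancel. You instead use the basic catenation axiom of the Robbin--Salamon index to split $i_{RS}(\Psi_j)$ as $i_{RS}(\{A_t(\alpha_j)\}) + i_{RS}(\{B_t(\beta_j)A(\alpha_j)\})$ --- note this second term is the index of the right-translated path, which de~Gosson's formula never computes directly --- and then establish by an explicit crossing-form calculation that $i_{RS}(\{B_t(\beta_j)A(\alpha_j)\}) = i_{RS}(\{B_t(\beta_j)\})$, both equalling $-\tfrac12\sign\beta_j$. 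The payoff of your approach is that it bypasses the $C_\psi$ machinery entirely and relies only on RS axioms plus a one-line determinant computation, making visible \emph{why} the correction vanishes: the translated path $\{B_t(\beta_j)A(\alpha_j)\}$ has a single regular boundary crossing whose half-contribution coincides with the index of the untranslated degenerate shear. The paper's approach has the advantage of black-boxing the degenerate-path subtleties into a cited theorem.

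Two small quibbles. First, the ``$k$ sufficiently large'' hypothesis is not used for what you say: since $w$ is balanced, $N_j, M_j \neq 0$ and $\eps_V,\eps_H \in \{\pm1\}$, so $\alpha_j = 2LkN_j\eps_V$ and $\beta_j = 2LkM_j\eps_H$ are already nonzero for every $k\geq 1$; your argument in fact proves the lemma for all such $k$. (In the paper's proof, by contrast, large $k$ is genuinely needed to ensure $A(\alpha_j), B(\beta_j) \in Sp_\psi(2)$, i.e.\ that $2 - ka_j$ and $2 - kb_j$ are nonzero, so de~Gosson's theorem applies.) Second, the shear-path index $i_{RS}(\{B_t(\beta_j)\}) = -\tfrac12\sign\beta_j$ is cited in the paper from \cite{Gutt2013}, not \cite{DeGosson2008b}. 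You rightly flag the sign-convention hazard: the paper uses the ``negatives minus positives'' convention for $\sign$ of a quadratic form in the de~Gosson formulas, while $\sign\alpha_j$ in Gutt's formula is the sign of a real number --- one must ensure the crossing-form computation is carried out in the convention consistent with the cited value, but since both of your computed contributions land on $-\tfrac12\sign\beta_j$, the argument closes.
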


\subsubsection{Proof of Lemma \ref{lem-cat-1}}

For a symplectic matrix $P$ with $\id - P$ invertible, define the \emph{Cayley transform} of $P$ to be the symmetric matrix
$$
M_P = \frac{1}{2} J (\id + P)(\id - P)^{-1}.
$$
Here $J$ is the standard complex structure on $\bR^{2n} \simeq \bC^n$. The proof of Lemma \ref{lem-cat-1} is an application of the following result regarding concatenating non-degenerate paths of matrices, which combines Corollary 3.5 and Lemma 3.3 from \cite{DeGosson2008b}.

\begin{theorem}\label{thm-PDG}
  Let $\{P_1(t)\}_{[0,1]}$ and $\{P_2(t)\}_{[0,1]}$ be two non-degenerate paths of symplectic matrices based at $\id$. Then
  $$
  i_{RS} \bigl(\{P_1(t)\} \# \{P_2(t) \cdot P_1(1)\}\bigr) = i_{RS} \bigl(\{P_1(t)\} \bigr) + i_{RS} \bigl(\{P_2(t) \}\bigr) +D,
  $$
  where $D =- \frac{1}{2} \sign (M_{P_1(1)} + M_{P_2(1)})$.
\end{theorem}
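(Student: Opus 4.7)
The plan is to derive Theorem \ref{thm-PDG} by directly combining Corollary 3.5 and Lemma 3.3 of \cite{DeGosson2008b}, as flagged in the statement itself. The mathematical substance of the theorem lives in those two results; what remains is to match our setup to theirs and to verify sign conventions.

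First I would invoke Corollary 3.5 of \cite{DeGosson2008b}, which gives a general concatenation formula for the Robbin--Salamon index of paths of symplectic matrices: the index of a concatenation splits as the sum of the individual indices plus a correction term depending only on the values at the joining point. To apply this in our setting, one rewrites the concatenation $\{P_1(t)\} \# \{P_2(t) \cdot P_1(1)\}$ in the form handled by de Gosson, using that the RS index transforms controllably under right-multiplication of a path by a fixed symplectic matrix, so that $i_{RS}(\{P_2(t) \cdot P_1(1)\})$ differs from $i_{RS}(\{P_2(t)\})$ only by an endpoint contribution that is absorbed into the same correction.

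Second I would invoke Lemma 3.3 of \cite{DeGosson2008b}, which evaluates this correction explicitly via the Cayley transform. The non-degeneracy of $P_1$ and $P_2$ guarantees that $\id - P_1(1)$ and $\id - P_2(1)$ are invertible, so $M_{P_1(1)}$ and $M_{P_2(1)}$ are well-defined symmetric matrices and $M_{P_1(1)} + M_{P_2(1)}$ has a well-defined signature. The lemma identifies the correction with $-\tfrac{1}{2}\sign\bigl(M_{P_1(1)} + M_{P_2(1)}\bigr)$, which is precisely the quantity $D$ in our statement. Conceptually, this step passes through the identification of the RS index with a Maslov intersection index of Lagrangian graphs against the diagonal, and uses the Cayley transform to convert the abstract Maslov triple index at the joining point into a concrete signature on the space of symmetric matrices.

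The main obstacle I anticipate is bookkeeping: different authors normalize the RS index and the Cayley transform differently (sign conventions, factors of $\tfrac{1}{2}$, choice of the complex structure $J$), so one must check carefully that our conventions agree with those of \cite{DeGosson2008b}. Once this alignment is confirmed, substituting Lemma 3.3 into the formula provided by Corollary 3.5 yields the theorem as stated.
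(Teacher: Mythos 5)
Your proposal takes essentially the same route as the paper: the paper gives no proof of Theorem \ref{thm-PDG} at all, presenting it as a quoted result that ``combines Corollary 3.5 and Lemma 3.3 from \cite{DeGosson2008b}'', which is precisely the combination you describe. Your additional remarks about matching the normalization of the Robbin--Salamon index and the Cayley transform are the right things to check, but they do not constitute a different argument.
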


Here the signature $\sign Q$ of a quadratic form $Q$ is taken to mean the number of negative squares minus the number of positive squares appearing in a diagonal form of $Q$.


\begin{proof}[Proof of Lemma \ref{lem-cat-1}]
  Proceed by induction on $l$. The case $l=1$ is trivial by (\ref{j=1}). For $l>1$ we have, using relation \eqref{inductive for j}, Theorem \ref{thm-PDG} and the inductive hypothesis,
  \begin{align*}
    i_{RS}(\{\overline{\Gamma}_l (t )\} )  &=
                                             i_{RS}\left(\{ \overline{\Gamma}_{l-1}(t) \} \# \{\Gamma_l(t) \} \right) \\
                                           &= i_{RS}\left(\{ \overline{\Gamma}_{l-1}(t) \} \# \{\Psi_l(t) \cdot \Gamma_{l-1}(1) \} \right) \\
                                           &= i_{RS}(\{ \overline{\Gamma}_{l-1}(t) \}) + i_{RS}\left(\{\Psi_l(t)\}\right)+D \\
                                           &= \sum_{j=1}^{l-1} i_{RS}(\{\Psi_j(t)\}) + i_{RS}(\{\Psi_l(t)\}) + D \\
                                           &= \sum_{j=1}^{l} i_{RS}(\{\Psi_j(t)\}) +D,
  \end{align*}
  %
  where $D =- \frac{1}{2} \sign(M_{\Psi_l(1)} + M_{\Gamma_{l-1}(1)})$.
  From here, we see that our proof is finished once we show that $D = 0$. Recalling that
  \begin{align*}
    \Psi_{l+1}(1) &= B(\beta_{l+1}) A(\alpha_{l+1}), \\
    \Gamma_l(1) &= B(\beta_{l}) A(\alpha_{l}) \cdot \ldots \cdot B(\beta_1) A(\alpha_1),
  \end{align*}
  we claim that both $M_{\Psi_{l+1}(1)}$ and $M_{\Gamma_l(1)}$ have the following asymptotic form as $k \to \infty$:
  \[\begin{pmatrix}
      0 & 1/2 \\ 1/2 & 0
    \end{pmatrix} + O\left(\frac{1}{k}\right).
  \]
  We only prove this formula for $M_{\Gamma_l(1)}$, as the proof for $M_{\Psi_{l+1}}(1)$ is similar and easier. First, in order to trace the exponential growth of $k$, for each $j \in \bN$, we will introduce $a_j$ and $b_j$ such that
  \begin{equation} \label{into-a}
    2L k N_j \eps_V(p) = \alpha_j = k a_j, \,\,\,\,\,\mbox{so} \,\,\,\,\,\, a_j = 2LN_j \eps_V(p)
  \end{equation}
  and
  \begin{equation} \label{into-b}
    2L k M_j \eps_H(p) = \beta_j = k b_j, \,\,\,\,\,\mbox{so} \,\,\,\,\,\, b_j = 2L M_j \eps_H(p)
  \end{equation}
  By explicit computation, we have the following asymptotic expansion for $\Gamma_{l}(1)$, where we set $c_l := (-1)^lb_l a_l ... b_1 a_1$ for brevity:
  \begin{equation}\label{eq-oB-asym}
    \Gamma_l(1) = \begin{pmatrix}
      k^{2l}c_l & -k^{2l-1} \frac{c_l}{a_1} \\ k^{2l-1} \frac{c_l}{b_l} & -k^{2l-2}\frac{c_l} {b_l a_1}
    \end{pmatrix} \cdot \left( 1 + O\left(\frac{1}{k}\right)\right), \quad k \to \infty.
  \end{equation}
  Indeed, this is simply Lemma \ref{lem-Abarj-asym} (for the case $i=1$, $j=r$) in different notation. To be precise, in \eqref{eq-oB-asym} we mean that each entry of $\Gamma_l(1)$ is multiplied by a different term of the form $1+O(1/k)$. Using this asymptotic formula (and the fact that $\det \Gamma_l(1) = 1$, as it is a product of shear matrices), one readily computes that
  $$
  M_{\Gamma_l(1)} = \begin{pmatrix}
    0 & 1/2 \\ 1/2 & 0
  \end{pmatrix} + O(1/k).
  $$
  As noted above, an identical argument shows that $M_{\Psi_{l+1}(1)}$ has this same asymptotic form, giving us
  $$
  Q := M_{\Gamma_l(1)} + M_{\Psi_{l+1}(1)}= \begin{pmatrix}
    0 & 1 \\ 1 & 0
  \end{pmatrix} + O(1/k).
  $$
  In particular, for $k \gg 1$ we get $\det Q < 0$, and so its eigenvalues have opposite signs. We deduce that $\sign Q = 0$, and our proof is complete.
\end{proof}

\subsubsection{Proof of Lemma \ref{lem-cat-2}}
The proof of Lemma \ref{lem-cat-2} is an application of another theorem regarding concatenation, which applies this time to degenerate paths.  Before stating this theorem, however, we must introduce yet more notation. (cf. Section 2.4 in \cite{DeGosson2008b}). For a fixed element $\psi \in Sp(2)$, define
\[ Sp_{\psi}(2) = \{ \Phi \in Sp(2) \,| \, \Phi - \psi\,\,\,\mbox{is invertible} \}. \]

Now fixing an element $\psi \in Sp_{\id}(2) =: Sp_0(2)$, we can define an operator on $Sp_{\psi}(2)$ by

\begin{equation}\label{ope}
  C_{\psi} (\Phi) = J (\psi - \id) (\Phi - \psi)^{-1} (\Phi - \id)
\end{equation}

For our purposes, we choose $\psi$ to be the matrix
\[  \psi = \left( \begin{array}{cc} 0 & 1\\ -1 & 0 \end{array} \right), \]
which is easily verified to be in $Sp_0(2)$.  For our $\psi$, $A(\alpha_j)$ is in $Sp_{\psi}(2)$ for any $j \in \bN$ when $k$ is sufficiently large. Indeed, we have
\[ \left( \begin{array}{cc} 1 & 0 \\ - k a_j & 1 \end{array} \right)  - \left( \begin{array}{cc} 0 & 1\\ -1 & 0 \end{array} \right)  = \left( \begin{array}{cc} 1 & -1\\ 1-k a_j & 1 \end{array} \right) \]
of determinant $2 - k a_j$, which is clearly non-zero for $k \gg 1$. Similarly, $B(\beta_j)$ is in $Sp_{\psi}(2)$. With these further notations established, we may now state the aforementioned theorem, which combines Corollary 3.7 and Lemma 3.3 from \cite{DeGosson2008b}.

\begin{theorem}\label{thm-PDG-2}
  Let $\{P_1(t)\}_{[0,1]}$ and $\{P_2(t)\}_{[0,1]}$ be two paths of symplectic matrices based at $\id$. If $P_1(1)$ and $P_2(1)$ are in $Sp_{\psi}(2)$, then
  \[ i_{RS}(\{P_1(t))\} \# \{P_2(t) \cdot P_1(1)\}) = i_{RS}(\{P_1(t)\}) + i_{RS}(\{P_2(t)\}) + Z \]
  where
  \[ Z = \frac{1}{2} \sign(C_{\psi}(P_2(1)) - C_{\psi}(P_1(1))) - \frac{1}{2} \sign(C_{\psi}(P_2(1))) + \frac{1}{2} \sign(C_{\psi}(P_1(1))). \]
\end{theorem}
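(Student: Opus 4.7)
The plan is to deduce the theorem from the axiomatic behaviour of the Robbin--Salamon index together with the $\psi$-twisted Cayley transform formalism already introduced. Recall that for a smooth path $\{P(t)\}_{t\in[0,1]}$ in $Sp(2)$, $i_{RS}$ is a signed count of crossings, namely parameters $t$ where $P(t)-\psi$ fails to be invertible, weighted by signatures of associated crossing forms, with the convention that crossings at $t=0$ or $t=1$ contribute with weight $\tfrac12$. Under a concatenation $\{P_1(t)\}\#\{P_2(t)\cdot P_1(1)\}$, the joining parameter becomes an \emph{interior} point of the concatenated path, so a crossing there counts with weight $1$, whereas the endpoint $t=1$ of $\{P_1\}$ and the endpoint $t=0$ of $\{P_2\}$ would each have contributed $\tfrac12$. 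The mismatch between these two bookkeeping conventions at the join is the source of $Z$.

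First I would invoke Lemma 3.3 of \cite{DeGosson2008b} to reinterpret these boundary contributions in linear-algebraic terms. That lemma asserts that if a path $\{P(t)\}$ ends at $P(1)\in Sp_\psi(2)$, then its half-weighted endpoint contribution to $i_{RS}$ equals $\tfrac12\sign C_\psi(P(1))$, with an analogous statement at $t=0$. This converts the analytic quantity (signature of a crossing form) into the algebraic one (signature of a symmetric matrix) given by the Cayley transform, and reduces the whole boundary arithmetic to linear algebra.

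Next I would carry out this arithmetic for the three indices appearing in the identity. Comparing $i_{RS}(\{P_1\}\#\{P_2\cdot P_1(1)\})$ with $i_{RS}(\{P_1\}) + i_{RS}(\{P_2\})$, interior contributions away from the join cancel, leaving three boundary terms at the join: one full-weight interior crossing contribution of the concatenated path, and two half-weight subtractions for the now-absent endpoints of $\{P_1\}$ at $t=1$ and $\{P_2\}$ at $t=0$. The crossing form of the concatenation at its interior join is computed by combining the tangent vector to $\{P_1\}$ at $t=1$ with that of $\{P_2\cdot P_1(1)\}$ at $t=0$; after translation into the Cayley picture this produces a symmetric operator whose signature is $\sign\bigl(C_\psi(P_2(1))-C_\psi(P_1(1))\bigr)$. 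Collecting these three terms produces exactly the claimed expression for $Z$; this is the content of Corollary 3.7 of \cite{DeGosson2008b}.

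The hard part will be the identification at the join: one must verify that the combined tangent-vector contribution of the concatenated path at its interior join corresponds, through the Cayley transform, to the \emph{difference} $C_\psi(P_2(1))-C_\psi(P_1(1))$ rather than a sum or some other combination. This is a direct computation exploiting the definition $C_\psi(P)=\tfrac12 J(\psi-\id)(P-\psi)^{-1}(P-\id)$ together with the chain rule for how $P_2(t)\cdot P_1(1)$ evolves near $t=0$, and the key algebraic observation that left-multiplication by the constant matrix $P_1(1)$ shifts the Cayley transform precisely by $C_\psi(P_1(1))$ at the join. Once this identification is in hand, the remaining signature arithmetic is routine and yields the stated formula.
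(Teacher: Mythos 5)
First, note that the paper does not prove Theorem \ref{thm-PDG-2} at all: it is quoted as a combination of Corollary 3.7 and Lemma 3.3 of \cite{DeGosson2008b}, so any argument here is a reconstruction of an external result rather than of something the paper establishes. Measured against that, your sketch has two genuine problems. The first is a misstatement of the basic definition: the crossings counted by $i_{RS}$ for a path of symplectic matrices are the parameters $t$ at which $P(t)-\id$ is singular (equivalently, where the graph of $P(t)$ meets the diagonal Lagrangian), not the parameters where $P(t)-\psi$ fails to be invertible. The matrix $\psi$ plays no role in the definition of the index; it is only an auxiliary base point in $Sp_0(2)$ chosen so that the twisted transform $C_\psi$ is defined at the possibly degenerate endpoints $P_1(1)$, $P_2(1)$ --- exactly the situation that the ordinary Cayley transform $M_P$ of Theorem \ref{thm-PDG} cannot handle. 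Starting from the wrong crossing condition, the subsequent bookkeeping of half-weights at the join does not go through as described.

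The second problem is that the step you yourself flag as the hard part --- identifying the join contribution with $\tfrac12\sign\bigl(C_{\psi}(P_2(1))-C_{\psi}(P_1(1))\bigr)$ and the endpoint contributions with $\tfrac12\sign C_{\psi}(P_i(1))$ --- is never carried out; it is asserted and then attributed to Corollary 3.7 of \cite{DeGosson2008b}, which is precisely the statement being proved, so the argument is circular where it matters. In particular, the claim that left multiplication by the constant $P_1(1)$ ``shifts the Cayley transform by $C_\psi(P_1(1))$'' is not a formal consequence of the chain rule: $C_\psi$ is a fractional-linear-type map, not additive under composition of its arguments, and this identity is where essentially all the content lies. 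For the record, the proof in \cite{DeGosson2008b} does not proceed by crossing forms at the join; it expresses the Conley--Zehnder index through the Leray index of pairs of Lagrangian paths (graphs relative to the diagonal) and obtains $Z$ from the cocycle property of the Leray index combined with the signature formula of their Lemma 3.3. A self-contained proof would require either reproducing that cocycle argument or honestly computing the crossing forms of $\{P_1(t)\}\#\{P_2(t)\cdot P_1(1)\}$ at its eigenvalue-one crossings near the join; your sketch does neither.
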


We recall that the signature $\sign M$ of $M$ is the number of negative squares minus the number of positive squares.

\begin{proof}[Proof of Lemma \ref{lem-cat-2}]
  Start by setting
  \[ P_1(t) = A_t(\alpha_j) \,\,\,\,\,\mbox{and} \,\,\,\,\,\, P_2(t) = B_t(\beta_j) \]
  Then by Theorem \ref{thm-PDG-2}, Lemma \ref{lem-cat-2} follows if we can show that our $Z$ is zero. By definition (\ref{ope}), we have (for sufficiently large $k$)
  \begin{align*}
    C_{\psi}(A(\alpha_j)) &= \left( \begin{array}{cc} 0 & 1\\ -1 & 0 \end{array} \right) \left( \begin{array}{cc} -1 & 1\\ -1 & -1 \end{array} \right)\left( \begin{array}{cc} 1 & -1\\ 1-k a_j & 1 \end{array} \right)^{-1}  \left( \begin{array}{cc} 0 & 0\\ -k a_j  & 0 \end{array} \right)  \\
                          & = \left( \begin{array}{cc}  \frac{2 k a_j}{2 - k a_j} & 0\\ 0 & 0 \end{array} \right)
  \end{align*}

  and

  \begin{align*}
    C_{\psi}(B(\beta_j)) &= \left( \begin{array}{cc} 0 & 1\\ -1 & 0 \end{array} \right) \left( \begin{array}{cc} -1 & 1\\ -1 & -1 \end{array} \right)\left( \begin{array}{cc} 1 & k b_j -1\\ 1 & 1 \end{array} \right)^{-1}  \left( \begin{array}{cc} 0 & k b_j \\ 0 & 0 \end{array} \right)  \\
                         & = \left( \begin{array}{cc} 0 & 0\\ 0 & \frac{2 k b_j}{2 - k b_j} \end{array} \right)
  \end{align*}

  giving

  \[  C_{\psi}(B(\beta_j)) -  C_{\psi}(A(\alpha_j)) = \left( \begin{array}{cc}  -\frac{2 k a_j}{2 - k a_j} & 0\\ 0 &  \frac{2 k b_j}{2 - k b_j}\end{array} \right). \]

  From here, it is easy to compute, for large enough $k$,
  \begin{itemize}
  \item{} $\frac{1}{2} {\sign}(C_{\psi}(B(\beta_j)) -  C_{\psi}(A(\alpha_j))) = 0$;
  \item{} $\frac{1}{2} {\sign}(C_{\psi}(B(\beta_j))) =  \frac{1}{2}$;
  \item{} $\frac{1}{2} {\sign}( C_{\psi}(A(\alpha_j))) =  \frac{1}{2}$,
  \end{itemize}
  giving $Z = 0$.  This completes the proof of Lemma \ref{lem-cat-2}, and hence the proof of Lemma \ref{thm-cat-tot}.
\end{proof}

\subsubsection{Completing the computation}

With Lemma \ref{thm-cat-tot} in hand, we have only to compute the Robbin-Salamon indices of the paths $\{A_t(\alpha_j)\}$, $\{B_t(\beta_j)\}$. For this we quote \cite[4.1.1]{Gutt2013}, which computes for $\alpha_j \neq 0$,
\begin{equation}
  i_{RS} \bigl( \{A_t(\alpha_j)\} \bigr) = -\frac{1}{2} \,\sign\alpha_j,
\end{equation}
which, in conjunction with the conjugation invariance of $i_{RS}$, gives also
\begin{equation}\label{eq-CZ-shear}
  i_{RS} \bigl( \{B_t(\beta_j) \} \bigr) = -\frac{1}{2} \,\sign\beta_j.
\end{equation}

Combining this calculation with Theorem \ref{thm-cat-tot} and the definitions of $\alpha_j$ and $\beta_j$ we obtain
\begin{equation}
  \begin{split}
    i_{RS}(\{\overline{\Gamma}_r(t)\}) & = \sum_{j=1}^r \bigl[ i_{RS} \bigl( \{ A_t(\alpha_j) \} \bigr) + i_{RS} \bigl( \{B_t (\beta_j) \} \bigr) \bigr] \\
    & = -\frac{1}{2} \sum_{j=1}^{r} \bigl[ \sign (N_j \eps_V(z_{2j-2})) + \sign (M_j \eps_H(z_{2j-1}))  \bigr].
  \end{split}
\end{equation}

Noting that $\eps_V(z_{2j})=\eps_{2j}$ and $\eps_H(z_{2j-1})=\eps_{2j-1}$, we get finally:
\begin{equation}
  \mu_{CZ}(z(\vec{\eps})) =1 -i_{RS}(\{\overline{\Gamma}_r(t)\} =1 + \frac{1}{2} \sum_{j=1}^{r} \bigl[\eps_{2j-2} \sign (N_j)  + \eps_{2j-1}  \sign (M_j) \bigr].
\end{equation}
This completes the proof of Theorem \ref{thm-cz-sgn}.

\subsection{Floer data of the smooth egg-beater map}

Up until this point, we have computed the Floer data for the non-smooth homeomorphism $\phi=\phi_{k,w}$.
In this section we compute the Floer data for the actual smoothed Hamiltonian diffeomorphism $\Phi_k(w)$.
For this section only, we denote by $\Phi_{k, \delta} \colon \bF_2\langle V,H \rangle \to \Ham(\Sigma,\omega)$ the homomorphism sending $V$ to $f_{V,\delta}^k$ and $H$ to $f_{H,\delta}^k$.
Thus the egg beater homomorphism $\Phi_k$ defined in section \ref{sec-eb-morphism} is simply $\Phi_{k,1/k}$, and the non-smooth homeomorphism $\phi$ is just $\Phi_{k,0}(w)$.
For our fixed balanced word $w \in \bF_2\langle V ,H \rangle$, denote by $H_\delta$ the Hamiltonian generating $\Phi_{k,\delta}(w)$ obtained by concatenating the $\delta$-smoothed autonomous Hamiltonians.

\begin{proposition}\label{prop-smoothing}
There exist $\delta_0 = \delta_0(w)$ and $k_0 = k_0(w)$ such that for $0 < \delta < \delta_0$ and $k > k_0$ the fixed points of the smooth Hamiltonian diffeomorphism $\Phi_{k,\delta}(w)$ in the class $\alpha(k,w)$ coincide precisely with those of the homeomorphism $\Phi_{k,0}(w)$.
\end{proposition}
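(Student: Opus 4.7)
The plan is to reduce the fixed-point problem for the smoothed diffeomorphism $\Phi_{k,\delta}(w)$ to the piecewise-linear one already solved in Theorem \ref{thm-fixed-pts}. The essential observation is that $\widetilde{f}_\delta$ preserves every vertical line $\{x = \mathrm{const}\}$ and differs from $\widetilde{f}_0$ only on the lines with $x \in U_\delta$; so whenever all intermediate $x$-coordinates of an orbit avoid $U_\delta$, the Hamiltonian isotopies $\Phi_{k,\delta}^t(w)$ and $\phi^t = \Phi_{k,0}^t(w)$ coincide along the orbit, and the two fixed-point equations become literally the same.

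First I would note that Lemma \ref{lem-inter-A} is purely topological: its proof uses only that the flows $f_{V,\delta}^t$ and $f_{H,\delta}^t$ are supported in $C_V$ and $C_H$ and shear in the same sense as $f_{V,0}^t$ and $f_{H,0}^t$. It therefore applies verbatim to $\Phi_{k,\delta}(w)$, forcing all intermediate points to lie in $A$ and realizing the based homotopy class $\widetilde{\alpha}(k,w)$ in $\pi_1(C,A)$. Lifting to the universal cover with $\widetilde{f}_\delta^{kN_j}(x,y) = (x, y + 2Lk N_j u_\delta(x))$, $n_j = k\sign(N_j)$, and using $y \in (-1,1)$, the analogue of the constraint \eqref{eq-cond-Nj} translates to
\[
u_\delta(x_{2j-2}) \in \left(\tfrac{1}{2|N_j|} - \tfrac{1}{Lk|N_j|},\; \tfrac{1}{2|N_j|} + \tfrac{1}{Lk|N_j|}\right),
\]
and similarly $u_\delta(x_{2j-1})$ lies within $O(1/k)$ of $1/(2|M_j|)$ by \eqref{eq-cond-Mj}.

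Second, the defining properties of $u_\delta$ give $u_\delta(x) > 1 - 2\delta$ on the central component $(-\delta,\delta)$ of $U_\delta$ and $u_\delta(x) < 2\delta$ on the boundary components $(-1,-1+\delta) \cup (1-\delta,1)$. I would choose $\delta_0 = \delta_0(w)$ small enough that neither $2\delta_0$ nor $1-2\delta_0$ comes near any of the values $\{\,1/(2|N_j|),\, 1/(2|M_j|) : 1 \leq j \leq r\,\}$, and $k_0 = k_0(w)$ large enough to absorb the $O(1/k)$ slack in the displayed constraint. This forbids any intermediate $x$-coordinate from landing in $U_\delta$; outside $U_\delta$ one has $u_\delta \equiv u_0$, so the fixed-point system for $\Phi_{k,\delta}(w)$ is identical to that for $\phi$, and Theorem \ref{thm-fixed-pts} identifies their fixed-point sets.

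For the reverse inclusion, any fixed point $z$ of $\phi$ in $\alpha(k,w)$ has its intermediate $x$-coordinates within $O(1/k)$ of $\pm(1-\nu_{j+1})$ or $\pm(1-\mu_j)$ by \eqref{eq-inter-pts-asym}, hence outside $U_\delta$ for the same $\delta_0, k_0$. Since $u_\delta = u_0$ at those $x$-values and each segment of the isotopy only shifts $y$ at fixed $x$, the full isotopies $\Phi_{k,\delta}^t(w)$ and $\phi^t$ agree pointwise on the orbit of $z$; hence $z$ is a fixed point of $\Phi_{k,\delta}(w)$ and traces the same loop in $\Sigma$, still in class $\alpha(k,w)$. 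The main technical point is obtaining the $O(1/k)$ control in the homotopy constraint uniformly in the sign vector $\vec\eps$, but this comes from the uniform bound $|y|<1$, so a single $k_0(w)$ works for all $2^{2r}$ solutions simultaneously.
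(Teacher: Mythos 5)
Your proof is correct and takes essentially the same route as the paper: both exploit the homotopy constraint to force $u_\delta$ at every intermediate $x$-coordinate to be within $O(1/k)$ of $1/(2|N_j|)$ (resp.\ $1/(2|M_j|)$), hence to lie strictly between $2\delta$ and $1-2\delta$, which pushes the orbit off the smoothing region $U_\delta$ where $u_\delta$ and $u_0$ disagree, so the PL and smoothed isotopies coincide along it. The only minor difference is that you spell out the reverse inclusion (fixed points of $\phi$ are fixed by $\Phi_{k,\delta}(w)$) using the asymptotics \eqref{eq-inter-pts-asym} from Theorem \ref{thm-fixed-pts}, whereas the paper leaves that direction implicit.
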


To begin with the proof of Proposition \ref{prop-smoothing}, we show that the asymptotic form we've proven for the fixed points can be easily shown to hold for any fixed points of a small enough smoothing.
\begin{lemma}\label{lem-smooth-asymp}
  For small enough $\delta > 0$, for any fixed point $z$ of $\Phi_{k,\delta}(w)$ in the class $\alpha(k,w)$ its intermediate points satisfy the asymptotics \eqref{eq-inter-pts-asym}. Moreover, for large enough $k\in \bN$, for every $t$ we have $H_\delta(t, \cdot) = H_0(t, \cdot)$ in a neighbourhood of $\phi^t(z)$.
\end{lemma}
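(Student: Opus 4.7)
The plan is to show that for $\delta$ smaller than an explicit threshold $\delta_0(w)$ depending on $w$, any fixed point of $\Phi_{k,\delta}(w)$ in the class $\alpha(k,w)$ has all intermediate $x$-coordinates lying outside the exceptional set $U_\delta$ where $u_\delta$ differs from $u_0$. Once this is established, on the $x$-slabs traversed by the trajectory the shear maps $f_{V,\delta}, f_{H,\delta}$ coincide with $f_{V,0}, f_{H,0}$, so the fixed-point equation for $\Phi_{k,\delta}(w)$ reduces to the piecewise-linear system solved in Section \ref{fixed} and the asymptotic formula \eqref{eq-inter-pts-asym} is inherited from Theorem \ref{thm-fixed-pts}. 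The second assertion is then a direct consequence.

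The proof of Lemma \ref{lem-inter-A} goes through verbatim for the smooth map, as it depends only on the free-homotopy data of the concatenated isotopy and not on smoothness; thus any fixed point $z$ in the compatible class has all intermediate points in $A$. Lift to the universal cover and write the intermediate points as $(x_j, y_j) \in (-1,1) \times \bR$. Since $\widetilde{f}_\delta^{\,kN_j}(x,y) = (x,\, y + 2Lk N_j u_\delta(x))$ preserves $x$, the requirement that the lifted intermediate trajectory represent $\widetilde{\alpha}(k,w)$ forces $y_{2j-2} + 2Lk N_j u_\delta(x_{2j-2}) \in (n_j L - 1, n_j L + 1)$. Combined with our choice $n_j = k\sign(N_j)$ and $y_{2j-2} \in (-1,1)$, this gives
\[
u_\delta(x_{2j-2}) = \nu_j + O(1/k), \qquad \nu_j = \frac{1}{2|N_j|},
\]
and analogously $u_\delta(x_{2j-1}) = \mu_j + O(1/k)$, with $\mu_j = 1/(2|M_j|)$.

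Now we use the explicit structure of the smoothing. On the central component $(-\delta, \delta)$ of $U_\delta$ one has $u_0(x) > 1-\delta$, hence $u_\delta(x) > 1 - 2\delta$; on each of the boundary components $(-1, -1+\delta)$ and $(1-\delta, 1)$, $u_0(x) < \delta$, hence $u_\delta(x) < 2\delta$. Since $\nu_j, \mu_j \in (0, 1/2]$, the identity $u_\delta(x_{2j-2}) = \nu_j + O(1/k)$ with $x_{2j-2} \in U_\delta$ would force either $\nu_j > 1 - 2\delta$ or $\nu_j < 2\delta$ (up to $O(1/k)$), both of which fail for
\[
\delta < \delta_0(w) := \frac{1}{4}\,\min\Bigl(1,\, \frac{1}{\max_j |N_j|},\, \frac{1}{\max_j |M_j|}\Bigr)
\]
and $k$ sufficiently large, and the same applies to $x_{2j-1}$ and $\mu_j$. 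Hence no intermediate $x$-coordinate enters $U_\delta$. On the complement of $U_\delta$ one has $u_\delta = u_0$, so $\widetilde{f}_\delta^{\,kN_j}$ and $\widetilde{f}_\delta^{\,kM_j}$ agree with their piecewise-linear counterparts on the slabs containing the trajectory, and $z$ solves exactly the system \eqref{eq-inter-coords}--\eqref{eq-fix-coords}; the asymptotics \eqref{eq-inter-pts-asym} now follow from Theorem \ref{thm-fixed-pts}. For the second claim, since each shear flow preserves the $x$-coordinate, the entire trajectory $\{\phi_{k,\delta}^t(z)\}_{t\in [0,1]}$ remains in the open set $\{x \notin U_\delta\}$; on this set the fourth condition on $u_\delta$ gives $\int_{-1}^x (u_\delta - u_0)\,d\sigma = 0$, hence $h_{V,\delta} = h_{V,0}$ and $h_{H,\delta} = h_{H,0}$, so $H_\delta(t,\cdot) = H_0(t,\cdot)$ on a neighborhood of $\phi^t(z)$.

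The main obstacle, and the reason for the specific list of properties imposed on $u_\delta$ in Subsection \ref{shear-map}, is the a priori possibility that spurious fixed points created by the smoothing live in $U_\delta$. This is ruled out by the arithmetic comparison between the values $\nu_j, \mu_j \in (0, 1/2]$ that $u_\delta$ must attain at the intermediate $x$-coordinates and the range $u_\delta(U_\delta) \subset [0, 2\delta) \cup (1-2\delta, 1]$ forced by $|u_\delta - u_0| < \delta$, which are disjoint once $\delta < \delta_0(w)$.
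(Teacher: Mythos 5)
Your argument is correct and follows essentially the same approach as the paper's proof: the homotopy constraint forces $u_\delta(x) = \nu + O(1/k)$ at every intermediate $x$-coordinate, and since $\nu \in (0,1/2]$ this excludes $x$ from $U_\delta$ for $\delta$ small, so the whole trajectory stays where $u_\delta = u_0$ and both claims follow. The only stylistic differences are that you recover \eqref{eq-inter-pts-asym} by citing Theorem \ref{thm-fixed-pts} (once $z$ is identified as a fixed point of the piecewise-linear system) rather than reading it off directly from $|x|=1-\nu+O(1/k)$, and you make explicit the role of Lemma \ref{lem-inter-A} and the fourth property of $u_\delta$, which the paper leaves implicit.
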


This lemma follows immediately from the following estimate for the shear map $f_\delta \colon C_* \to C_*$. Denote $U = (-1,1) \times (-1,1)/L\Z \subset C_*$.
\begin{lemma}
  Suppose $(x,y) \in U$ is a point such that, for some $N \in \bZ, k \in \mathbb{N}$ and for $n =k \cdot \sign(N)$ one has $f_\delta^{kN}(x,y)\in U$ and $[\{f^t_\delta(x,y)\}_{t\in [0,kN]}] = n$ in $\pi_1(C_*,U) = \mathbb{Z}$. Then one has $u_\delta(x) = \nu + O(1/k)$, where $\nu = \frac{n}{2kN}=\frac{1}{2|N|}$.
\end{lemma}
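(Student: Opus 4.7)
The plan is to exploit the explicit piecewise form of the shear flow and read off $u_\delta(x)$ directly from the winding constraint. Since $f_\delta$ preserves the $x$-coordinate and shifts $y$ by a quantity depending only on $x$, it lifts to the universal cover $[-1,1]\times\R$ as $\widetilde{f}_\delta^{\,t}(x,y)=(x,\,y+2Lt\,u_\delta(x))$. In particular, at time $kN$ one has $\widetilde{f}_\delta^{\,kN}(x,y)=(x,\,y+2LkN\,u_\delta(x))$. So the whole question reduces to pinning down the $y$-translation $2LkN\,u_\delta(x)$ in terms of the homotopy class $n$.

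Next I would translate the hypothesis $[\{f_\delta^t(x,y)\}_{t\in[0,kN]}]=n$ in $\pi_1(C_*,U)=\Z$ into a statement about the lift. Because $U=(-1,1)\times((-1,1)+L\Z)/L\Z$ is contractible in the annulus $C_*$, the preimage of $U$ in the universal cover is the disjoint union $\bigsqcup_{m\in\Z}(-1,1)\times(mL-1,mL+1)$, and the based homotopy class $n$ is precisely the component $(-1,1)\times(nL-1,nL+1)$. The two hypotheses $f_\delta^{kN}(x,y)\in U$ and winding number $n$ therefore force
\[ y + 2LkN\,u_\delta(x) \in (nL-1,\,nL+1). \]
Since additionally $(x,y)\in U$ gives $y\in(-1,1)$, subtracting yields
\[ \bigl| 2LkN\,u_\delta(x) - nL \bigr| < 2. \]

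The final step is to divide through by $2LkN$ and use $n=k\cdot\sign(N)$ so that $n/(2kN)=1/(2|N|)=\nu$:
\[ u_\delta(x) = \nu + O\!\left(\tfrac{1}{L|N|\,k}\right), \]
the implied constant depending only on $L$ and $|N|$, both fixed. This is exactly the claim.

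I anticipate no genuine obstacle: the lemma is really a restatement of the fact that for a pure shear the winding number of the trajectory between $t=0$ and $t=kN$ is $2LkN\,u_\delta(x)/L=2kN\,u_\delta(x)$ up to an additive error of at most $2$ coming from the endpoints lying in the width-$2$ set $U$. Inverting this linear relation costs a factor $1/k$ in the error, which is precisely what is asserted. The only minor subtlety worth spelling out in the write-up is the identification of the component of the preimage of $U$ corresponding to the based class $n$, which is where the hypothesis $n=k\,\sign(N)$ and the sign convention for $N$ actually enter.
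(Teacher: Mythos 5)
Your proof is correct and takes essentially the same approach as the paper: compute the lift of the shear flow on the universal cover, translate the based homotopy class condition into $y+2LkN\,u_\delta(x)\in(nL-1,nL+1)$, and solve for $u_\delta(x)$. You merely spell out the final arithmetic (subtracting $y\in(-1,1)$ to get the explicit $O(1/k)$ bound) in slightly more detail than the paper.
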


\begin{proof}
  We have
  $$
  f_\delta^{kN}(x,y) = (x, [y + 2kNLu_\delta (x)]).
  $$
  On the other hand, considering the condition on $[\{f^t_\delta(x,y)\}_{t\in[0,kN]}]_{\pi_1(C_*,U)}$ and lifting to the universal cover, we get
  $$
  y+2kNLu_\delta(x) \in (nL-1,nL+1),
  $$
  so
  $$
  u_\delta(x) = \frac{n}{2kN} + O(1/k) = \nu + O(1/k)
  $$
\end{proof}
Therefore, for $\delta>0$ small enough (depending on $N$), we get that $2\delta < u_\delta(x) < 1-2\delta$. Since $||u_\delta - u_0|| < \delta$, this means that $x$ is away from the subset $U_\delta$ where $u_\delta \neq u_0$. In particular, $u_\delta(x)=u_0(x)=1-|x|$, and hence the bound becomes $|x|=1-\nu + O(1/k)$. Now, since we are using the concatenation Hamiltonian $H_\delta$ and our class $\talpha(k,w)$ is determined by \eqref{eq-mn} and \eqref{eq-munu}, the first claim in Lemma \ref{lem-smooth-asymp} follows. If follows that, for $k\in \bN$ large enough (depending on $w$), each intermediate points (and hence its intermediate path) remains in the region of the annulus where $H_\delta$ coincides with the appropriate shear Hamiltonian. This proves the second claim in Lemma \ref{lem-smooth-asymp}. In particular, it follows that the computations of actions and indices we carried out for the non-smooth isotopy are valid also for the smoothed isotopy, for large enough $k$ and small enough $\delta > 0$. We summarize this in the following proposition.

\begin{proposition}\label{prop-Floer-smooth}
  Let $w = H^{M_r} V^{N_r} \cdots H^{M_1} V^{N_1} $ be a balanced word. There exists $k_0 = k_0(w)$ such that for $k>k_0(w)$, $\Phi_k(w)$ has exactly $2^{2r}$ fixed points in the class $\alpha(k,w)$, indexed by $z(\vec{\eps})$, $ \vec{\eps}\in\{\pm 1\}^{2r} $. The action and Conley-Zehnder index of the fixed point $z(\vec{\eps})$ are given by \eqref{eq-action} and \eqref{eq-CZ}, respectively.
\end{proposition}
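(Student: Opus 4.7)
The plan is to package together the three preceding results: Theorem \ref{thm-fixed-pts} (for the count of fixed points of the piecewise-linear map $\phi$), Proposition \ref{prop-action-asym} (for the actions), and Theorem \ref{thm-cz-sgn} (for the Conley--Zehnder indices), transferring each conclusion from $\phi = \Phi_{k,0}(w)$ to the smooth Hamiltonian diffeomorphism $\Phi_k(w) = \Phi_{k,1/k}(w)$ by means of Proposition \ref{prop-smoothing} and Lemma \ref{lem-smooth-asymp}.

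First, I would pick $k_0 = k_0(w)$ large enough so that (a) $1/k < \delta_0(w)$ for all $k > k_0$, where $\delta_0$ is the constant of Proposition \ref{prop-smoothing}; (b) $k$ exceeds the threshold of Theorem \ref{thm-fixed-pts}; and (c) $k$ exceeds the threshold of Lemma \ref{lem-smooth-asymp} guaranteeing $H_\delta(t,\cdot) = H_0(t,\cdot)$ in a neighbourhood of each orbit $\phi^t(z)$. With this choice, Proposition \ref{prop-smoothing} (applied with $\delta = 1/k$) identifies the fixed point set of $\Phi_k(w)$ in the class $\alpha(k,w)$ with that of $\phi$, and Theorem \ref{thm-fixed-pts} tells us the latter consists of exactly $2^{2r}$ non-degenerate fixed points, one for each sign vector $\vec{\eps} \in \{\pm 1\}^{2r}$. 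This settles the counting statement.

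For the action formula, fix $\vec{\eps}$ and let $z = z(\vec{\eps})$. By Lemma \ref{lem-smooth-asymp}, for every $t \in [0,1]$ the Hamiltonians $H_{1/k}(t,\cdot)$ and $H_0(t,\cdot)$ agree on a neighbourhood of $\phi^t(z)$. Consequently, both flows coincide on this neighbourhood, so the orbit $\{\Phi_k(w)^t z\}$ of the smooth Hamiltonian equals $\{\phi^t(z)\}$, and $H_{1/k}(t,\Phi_k(w)^t z) = H_0(t,\phi^t(z))$ pointwise in $t$. Thus the integrand in the action integral is unchanged, and the symplectic area $\int_{\bar{z}}\omega$ of the capping (relative to the reference loop $\eta_\alpha$) is likewise the same as in the piecewise-linear case. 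Therefore the asymptotic formula \eqref{eq-action} of Proposition \ref{prop-action-asym} transfers verbatim.

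For the Conley--Zehnder index the argument is identical in spirit: because the smooth and piecewise-linear Hamiltonians coincide in a neighbourhood of the orbit, their linearized flows along $\{\phi^t(z)\}$ coincide as paths of symplectic matrices. Hence, with the framing \eqref{eq-framing} fixed along $\eta_\alpha$, the Robbin--Salamon computation of Lemma \ref{thm-cat-tot} applied in Theorem \ref{thm-cz-sgn} is unaltered, yielding \eqref{eq-CZ} for $\Phi_k(w)$. The non-degeneracy at $z$ is also preserved, since $D\Phi_k(w)(z) = D\phi(z) = \overline{A}$ (with $\overline{A} - \id$ shown to be non-singular in the proof of Theorem \ref{thm-fixed-pts}). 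The main conceptual point, and the only thing that could obstruct the argument, is the neighbourhood-agreement clause of Lemma \ref{lem-smooth-asymp}; once that is in hand, as it is by hypothesis, the proposition is immediate.
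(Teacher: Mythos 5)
Your proposal is correct and matches the paper's own route: the paper presents Proposition \ref{prop-Floer-smooth} explicitly as a summary of Theorem \ref{thm-fixed-pts}, Proposition \ref{prop-action-asym}, and Theorem \ref{thm-cz-sgn}, transferred to the smooth map via Proposition \ref{prop-smoothing} and Lemma \ref{lem-smooth-asymp}. Your bookkeeping of how $k_0$ must simultaneously beat the thresholds of these results and ensure $1/k<\delta_0(w)$, and your observation that the orbit, capping, and linearized flow literally coincide once $H_{1/k}=H_0$ near the trajectory, is exactly the intended argument.
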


\subsection{Linear growth of the boundary depth}\label{subsect-linear-growth}
 We are ready to prove Theorem \ref{norm}. We first prove it for a surface, and then, as an immediate corollary, for the general case.

{\bf Proof for $(\Sigma,\omega)$}. We note that $\Sigma$ is $\alpha$-atoroidal for any $\alpha \in \pi_0(\cL \Sigma)$, since any map $T^2 \to \Sigma$ is has degree zero.

First we take a word $ w \in \bF_2$ which is conjugate to $H^n$ or $V^n$ for some $n \in \bZ$. By conjugation invariance of the Hofer norm, we can assume that $w=H^n$ or $w=V^n$ for some $n \in \bZ$. Assume for concreteness that $w=V^n$. Then $\phi:= \Phi_{k}(w)$ is the time-one map of the autonomous Hamiltonian $k H_V$, where $H_V$ is the normalized (smoothed) Hamiltonian corresponding to the $n$-th power of the shear map around $C_V$. Observe that the Hamiltonian flow generated by $kH_V$ has no contractible periodic orbits in $C_V$, and $kH_V$ is locally constant outside $C_V$, where it attains its extremal values. Therefore,
$$
\spec(\phi) = \{\min k H_V, \max kH_V\}.
$$
We focus on the spectral invariants $c_+(\phi)$ and $c_-(\phi)$. As $c_\pm (\phi) \in \spec(\phi)$, and by \eqref{eq-nu-positive} $c_+(\phi) \neq c_-(\phi)$ we deduce that at least one of $c_\pm(\phi)$ is non-zero, and hence is linear in $k$. Since $|c_\pm(\phi)|\leq ||\phi||_\rH$, we obtain a lower bound for $||\Phi_k(w)||_\rH$ linear in $k$. This concludes the proof of Theorem \ref{norm} for words conjugate to a power of $V$ or a power of $H$.
Next take a word $w \in \bF_2$ which is not conjugate to $H^n$ or $V^n$. Then $w$ is conjugate to a balanced word $H^{M_r} V^{N_r} \cdots H^{M_1} V^{N_1}$, where $N_j, M_j \neq 0$ and $r \geq 1$.  Let $\alpha={\alpha}(w, k)$ be our specially chosen compatible class. We consider the Floer complex $CF_*(H)_{\alpha}$ of the Hamiltonian $H$ generating $\Phi_k(w)$ obtained by concatenating the (autonomous) Hamiltonians used to define $f_{V,1/k}$ and $f_{H,1/k}$. Our conventions for the choice of representative $\eta_{\alpha}$ and trivialization of the bundle $\eta_{\alpha}^*TM$ are as in Section \ref{action-cz}. The next Lemma is the key step in the proof of Theorem \ref{norm}.
\begin{lemma}\label{keylemma} For sufficiently large $k \in \bN$ we have
\begin{equation}\label{eq-beta-bound}
\beta_{\alpha}(\Phi_k(w) ) \geq  (L k / 4)  \min \{ |N_j| , |M_j| \}.
\end{equation}
\end{lemma}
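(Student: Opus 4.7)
The strategy is to isolate a single generator $z_{\max}$ of top Conley-Zehnder index in the filtered Floer complex $CF_*(H)_\alpha$, observe that $x := \partial_{H,\alpha} z_{\max}$ must be a non-zero boundary whose every primitive contains $z_{\max}$, and then read off the lower bound on $\cA_H(y) - \cA_H(x)$ from the asymptotic action formula of Proposition~\ref{prop-action-asym}.

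\textbf{Step 1: the unique top generator and why it is not a cycle.} From the formula \eqref{eq-CZ} the maximal value of $\mu_{CZ}$ among the $2^{2r}$ fixed points is $1+r$, realized by the unique sign vector $\vec{\eps}_+$ with $\eps_{2j-2}=\sign(N_j)$ and $\eps_{2j-1}=\sign(M_j)$ for all $j$. Set $z_{\max}:=z(\vec{\eps}_+)$. Since $\Sigma$ is $\alpha$-atoroidal for every $\alpha \in \pi_0(\cL\Sigma)$ (as any $T^2\to\Sigma$ has degree zero) and $\alpha=\alpha(k,w)$ is non-contractible, one has $HF_*(\Sigma)_\alpha = 0$, so every cycle is a boundary. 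If $\partial z_{\max}=0$, then $z_{\max}$ would be a non-zero cycle; but no generator has CZ-index $\geq r+2$, so $z_{\max}$ could not be a boundary, contradicting $HF_*(\Sigma)_\alpha = 0$. Therefore $x := \partial_{H,\alpha} z_{\max} \neq 0$, and by degree reasons its support lies entirely within the one-sign-flip generators (those of CZ-index $r$).

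\textbf{Step 2: every primitive contains $z_{\max}$.} Let $y \in \partial_{H,\alpha}^{-1}(x)$ and decompose it by Conley-Zehnder degree. Since $z_{\max}$ is the unique generator of degree $r+1$ and no generators of higher degree exist, the degree-$(r+1)$ part $y_{r+1}$ is a scalar multiple of $z_{\max}$. Looking at the degree-$r$ component of $\partial y = x$, we must have $\partial y_{r+1} = x$, so $y_{r+1} \neq 0$, and over $\bZ/2$ this forces $y_{r+1}= z_{\max}$. In particular $z_{\max}$ appears in $y$ with non-zero coefficient, so $\cA_H(y) \geq \cA_H(z_{\max})$.

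\textbf{Step 3: the action gap.} Plugging $\vec\eps_+$ and a neighbouring one-flip $\vec\eps'$ into the asymptotic formula \eqref{eq-action}, a sign flip at position $2j-2$ (resp.\ $2j-1$) changes the action by exactly $2Lk|N_j|(1-\nu_j)^2 + O(1)$ (resp.\ $2Lk|M_j|(1-\mu_j)^2 + O(1)$). From \eqref{eq-munu}, $\nu_j = 1/(2|N_j|) \leq 1/2$ and $\mu_j = 1/(2|M_j|) \leq 1/2$, hence both $(1-\nu_j)^2$ and $(1-\mu_j)^2$ are bounded below by $1/4$. Consequently, for every one-flip generator $z'$,
\[
\cA_H(z_{\max}) - \cA_H(z') \geq \tfrac{Lk}{2}\min_j\{|N_j|,|M_j|\} + O(1).
\]
Since $\cA_H(x) = \max_{z'\in\mathrm{supp}(x)} \cA_H(z')$ and $\mathrm{supp}(x)$ is contained in the set of one-flip generators, the same lower bound holds for $\cA_H(z_{\max})-\cA_H(x)$. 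Combining with Step~2, for $k$ sufficiently large the $O(1)$ term is absorbed and we obtain
\[
\beta_{\alpha}(\Phi_k(w)) \geq \cA_H(y) - \cA_H(x) \geq \tfrac{Lk}{4}\min_j\{|N_j|,|M_j|\},
\]
as required.

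\textbf{Expected main obstacle.} All the heavy lifting is already done in Sections~\ref{fixed} and \ref{action-cz}; the only subtle step is the bookkeeping in Step~2, making sure that the uniqueness of the top-degree generator genuinely forces every primitive to contain $z_{\max}$ (this uses that the complex is truncated from above by degree $r+1$, together with the fact that $z_{\max}$ itself is not a cycle). Once this is clear, the bound on the boundary depth is essentially a direct transcription of the action asymptotics.
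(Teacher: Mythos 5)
Your proposal is correct and follows essentially the same route as the paper: isolate the unique generator $z_{\max}$ of top CZ-index $r+1$, note that vanishing of $HF_*(\Sigma)_\alpha$ forces $\partial z_{\max}\neq 0$, and then read the boundary depth off the action gap to the one-flip generators of index $r$, which Proposition~\ref{prop-action-asym} shows is $\geq \frac{Lk}{2}\min_j\{|N_j|,|M_j|\}+O(1)$ because $(1-\nu_j)^2,(1-\mu_j)^2\geq 1/4$. The one place where you are actually a bit more careful than the paper's write-up is Step~2: the paper asserts $\partial^{-1}(\partial z_0)=\{z_0\}$, which taken literally over $\bZ/2$ is false whenever $\ker\partial\neq 0$ (and $\ker\partial=\mathrm{im}\,\partial\neq 0$ here); your version — that the degree-$(r+1)$ component of any primitive $y$ must equal $z_{\max}$, hence $\cA(y)\geq\cA(z_{\max})$ — is the correct statement that the paper's shorthand is meant to convey, and is exactly what the boundary-depth estimate needs.
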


Here $\beta_{\alpha}$ denotes the boundary depth, as discussed in Section \ref{Floer}. Note that for a balanced word $w$ the quantity $\min\{|N_j|, |M_j|\}$ (which was denoted $\tau(w)$ in \eqref{eq-tau}) is strictly positive, hence the right hand side of \eqref{eq-beta-bound} is linear in $k$.
\begin{proof}
Using Proposition \ref{prop-Floer-smooth}, we know that the fixed points in the class $\alpha(k,w)$ are indexed by $\vec{\eps}\in \{\pm 1\}^{2r}$, with actions and Conley-Zehnder indices given by \eqref{eq-action} and \eqref{eq-CZ}, which we recall:
\begin{equation*}
 \cA(z(\vec{\eps})) = L k\sum_{j=1}^{r} \left[ \eps_{2j-2} N_j \left(1-\nu_j\right)^2 + \eps_{2j-1} M_j \left(1-\mu_j \right)^2\right] + O(1),
\end{equation*}
and
\begin{equation*}
\mu_{CZ}(z(\vec{\eps})) = 1 + \frac{1}{2} \sum_{j=1}^{r} \bigl[\eps_{2j-2} \sign (N_j)  + \eps_{2j-1}  \sign (M_j) \bigr].
 \end{equation*}
Let $z_0$ be the unique fixed points of maximal Conley-Zehnder index $\mu_{CZ}(z_0)=1+r$. Note first that $z_0$ is not a Floer boundary (as it has maximal index), and hence, since Floer homology in the non-trivial class $\alpha$ vanishes, $\partial z_0 \neq 0$. So $\del z_0$ is a non-trivial linear combination of critical points of index $r$. Therefore,
$$
\cA(z_0) - \cA(\partial z_0) \geq \Delta := \cA(z_0) - \max\{\ \cA(z) : \mu_{CZ}(z)=r\}.
$$
But if $z$ is a fixed point of Conley-Zehnder index $\mu_{CZ}(z)=r$, then the sign vector for $z$ only differs from the sign vector for $z_0$ in one entry. Hence we have the following asymptotics for the action difference:
\[ \cA(z_0)-\cA(z)  \geq  \frac{L k}{2} \text{min} \{ |N_j| , |M_j| \} + O(1). \]
So for large enough $k$, we have
$$
\Delta > \frac{Lk}{4} \text{min} \{ |N_j| , |M_j| \}.
$$
Noting that $\partial^{-1}(\partial z_0) = \{z_0\}$, we see that
$$
\inf_{y \in \partial^{-1}(\partial z_0)} \left( \cA(y) - \cA(\partial z_0)\right) \geq \Delta.
$$
Therefore, by definition \eqref{eq-bd-def}, for large enough $k$ we get the desired bound
$$
\beta_\alpha (\Phi_w(z)) \geq \Delta \geq \frac{Lk}{4} \text{min} \{ |N_j| , |M_j| \}.
$$

\end{proof}
By applying \eqref{bdydepth}, we obtain the same lower bound on the Hofer norm of $\Phi_{k}(w)$. This concludes the proof of Theorem \ref{norm} for all words $w \in \bF_2$. Note also that for long words $w$, the lower bound given by the right hand side \eqref{eq-beta-bound} is simply $(L/4) \tau(w) \cdot k $, thus proving Remark \ref{tau}.\\ \\

{\bf Proof for the case $\Sigma \times M$.} We first note that, denoting $\alpha_0 \in \pi_0(\cL M)$ the class of the constant loop, the manifold $\Sigma \times M$ is $\alpha \times \alpha_0$-atoroidal for any $\alpha \in \pi_0(\cL\Sigma)$. The computations of of the fixed points and their actions and indices carry over to this case without change, except that the Conley-Zehnder indices of all fixed points are shifted by $\frac{1}{2}\dim M$, which doesn't affect the arguments.
We again consider separately two cases. First, when $w$ is conjugate to $V^n$ or $H^n$, for some $n\in\Z$. Then the argument outlined above using spectral invariants applies verbatim to show that $||\phi_{k,w}\times \id_M||_{\mathrm{H}}$ grows linearly in $k$.
Next, for a long word $w$, we assume without loss of generality that $w$ is balanced. Then using \eqref{eq-bd-product} we have $\beta(\phi_{k,w} \times \id_M) \geq \beta(\phi_{k,w})$ and thus grows linearly in $k$, whence so does $||\phi_{k,w}\times \id_M||_{\mathrm{H}}$.

\printbibliography

\end{document}